\declaretheorem[]{lemma}
\declaretheorem[name = Theorem]{thm}
\declaretheorem[name = Proposition]{proposition}
\declaretheorem[name = Assumption]{assump}
\declaretheorem[name = Remark]{rmk}
\newtheorem{algdes}{Setup}
\newcommand{\R}{\mathbb{R}}
\newcommand{\N}{\mathbb{N}}
\newcommand{\w}{\omega}
\newcommand{\D}{\mathscr{D}}
\newcommand{\E}{\mathbb{E}}
\newcommand{\distas}[1]{\mathbin{\overset{#1}{\kern\z@\sim}}}%
\newsavebox{\mybox}\newsavebox{\mysim}
\def\BibTeX{{\rm B\kern-.05em{\sc i\kern-.025em b}\kern-.08em
    T\kern-.1667em\lower.7ex\hbox{E}\kern-.125emX}}
\begin{document}

\title{Time-delay Induced Stochastic Optimization and Extremum Seeking \\
}

\author{Naum Dimitrieski$^{1}$, Michael Reyer$^{1}$, Mohamed-Ali Belabbas$^{2}$, and Christian Ebenbauer$^{1}$
\thanks{Naum Dimitrieski, Michael Reyer, and Christian Ebenbauer are with the Chair of Intelligent Control Systems, RWTH Aachen University, 52062 Aachen, Germany
        {\tt\small (e-mail: \{naum.dimitrieski, michael.reyer, christian.ebenbauer\}@ic.rwth-aachen.de)}}%
\thanks{Mohamed-Ali Belabbas is with the Coordinated Science Laboratory, University of Illinois at Urbana-Champaign, Urbana, IL 61801 USA 
        {\tt\small (e-mail: belabbas@illinois.edu)}}%
}

\maketitle

\begin{abstract}

In this paper a novel stochastic optimization and extremum seeking algorithm is presented, one which is based on time-delayed random perturbations and step size adaptation. 
For the case of a one-dimensional quadratic unconstrained optimization problem, global exponential convergence in expectation and global exponential practical convergence of the variance of the trajectories are proven. 
The theoretical results are complemented by numerical simulations for one- and multi-dimensional quadratic and non-quadratic objective functions. 
\end{abstract}

\section{Introduction}

Extremum seeking (ES) is an adaptive control concept that allows the control of unknown dynamical systems. In its simplest form, ES algorithms are a class of gradient-free iterative optimization algorithms for unconstrained optimization problems that only require the values (and not the gradient) of the objective function for optimization. Over the past two decades, extremum seeking control has undergone a significant development, and today, there exist several algorithms and methodologies to systematically analyze and 
synthesize extremum seeking controllers~\cite{nesic2010survey, scheinker2024survey}.

Our motivation for this work is to develop a novel stochastic and discrete-time ES algorithm
that is partly inspired by Lie bracket-based deterministic ES algorithms.
In more detail, the simplest Lie bracket-based deterministic continuous-time ES algorithm is of the form~\cite{durr2013lie}, \cite{labar2018gradient}
    \begin{align}
    \label{ctes}
        \dot{x}(t) = -\sqrt{\w}\sin(\w t) J(x(t)) + \sqrt{\w}\cos(\w t)
    \end{align}
    with the objective function $J: \R \to \R$.
    If the objective is a continuously differentiable function, then it can be shown
    that the trajectories of \eqref{ctes}, for a sufficiently large frequency~${\omega>0}$, approximate (uniformly in the $L_\infty$-norm) trajectories  of the gradient dynamics $\dot{\bar x}(t)=-\frac{1}{2}\nabla J(\bar{x}(t))$.
    The terms $\sin(\w t),\cos(\w t)$ are so-called dither (learning, exploration) signals
    which enable the non-parametric learning of the gradient by averaging.
    
    Considering the sine term as a delayed cosine term (in the sense of 
    $\cos(t-\pi/2)=\sin(t)$), the first main idea of this work is to replace the cosine term by zero-mean noise and the sine term by a delayed 
    version of the zero-mean noise. This idea leads in the continuous-time setting to
    a nonlinear stochastic delay-differential equation of the form 
    \begin{align}
        \mathrm{d}x(t)=-J(x(t)) \mathrm{d}w(t-T)+ \mathrm{d}w(t),
    \end{align}
    where~$\mathrm{d}w$ is a standard Wiener process and $T > 0$ is the time delay. While this continuous-time equation has many interesting properties and applications,    as reported in \cite{ali_christian_report}, we focus in this paper on discrete-time ES algorithms. Hence, with zero-mean real random variables~$w_k, k \geq 0$, a discrete-time algorithm could take the form
    \begin{align}
    \label{dtses}
        x_{k+1} - x_k = - w_{k-1}J(x_k) + w_k,
    \end{align}
    where $w_i,w_j$, $i \neq j$, are stochastically independent random variables. 
    While the continuous-time system  \eqref{ctes} is shown to be semi-globally asymptotically practically stable with respect to the global minimizer of a  convex objective function \cite{durr2013lie},  the discrete-time stochastic system \eqref{dtses} is not suitable for
    optimization due to an unbounded growth of the variance over time.

    The main contribution of this paper is the introduction of a novel discrete-time stochastic optimization and ES algorithm which possesses favorable
    properties in terms of convergence. This is achieved by an appropriate modification of \eqref{dtses}. In particular, we present a class of
    discrete-time stochastic ES algorithms which achieve global exponential convergence in expectation and global exponential practical convergence of the variance for a quadratic objective function. 

    Due to the use of delayed zero-mean random exploration, the proposed ES algorithm does not require a tuning of the exploration sequence, as it is for example necessary for the discrete-time deterministic ES algorithm presented in \cite{feiling_ali_christian2021gradient}. In addition, it only requires objective function evaluations at~$x_k$, i.e., $J(x_k)$,
    and not at e.g. $x_k+\delta_k$, where $\delta_k$
    are either random perturbations or user-defined dither functions, as for instance \cite{spall_book2005introduction,manzie_krstic2009extremum, liu_krstic2015stochastic, nesic2014multi, hazeleger_nesic2022sampled}, in order to perform an iteration step in the optimization.
    In more detail, in \cite{spall_book2005introduction,manzie_krstic2009extremum, liu_krstic2015stochastic} the objective function is perturbed using a zero-mean random variable, in \cite{nesic2014multi} for the gradient-based method canonical basis vectors are used for objective function perturbation, and in \cite{hazeleger_nesic2022sampled} a broader class of (possibly) input-dependent dither signals is considered.
    Thus, the proposed algorithm is in particular appealing for ES problems, since in ES problems~$x_k$ typically represents the state of a dynamical
    system and evaluations at $x_k+\delta_k$ would require additional effort to steer the system from state $x_k$ to $x_k+\delta_k$. 
    
    The second main idea and important novelty of the proposed algorithm is
    an adaptive step size rule (which is state-dependent and involves an additional dynamic equation). Hence, the step size sequence is not a priori determined to be a decreasing one, as it is the case in a typical implementation of a Finite Difference Stochastic Approximation (FDSA) method \cite{kiefer_wolfowitz1952stochastic}, see e.g. the Simultaneous Perturbation Stochastic Approximation (SPSA) sequence in \cite{spall1992multivariate}. This adaptation is crucial to control the variance and to achieve (practical) convergence in the mean-square sense.
    Moreover, the adaptation allows for the possibility of optimizing a time-varying problem, as this is common in ES control and online learning.
    Finally, time-delays in the context of ES also have been studied, for example in \cite{zhu_fridman2022extremum, oliveira_krstic2016extremum}, in order to carry out an averaging analysis or to deal with systems with delays, but not as a mean to synthesize algorithms like in this work.
    
    In terms of application, apart from ES problems\cite{scheinker2024survey}, there exists the possibility of utilizing the proposed algorithm class in optimization problems where no gradient information is available. For example, it could represent an alternative approach to policy gradient-like algorithms \cite{spall_book2005introduction,meyn2022control}, but this is not subject of the presented work. 

    The structure of the paper is as follows. The problem statement and main result of this paper are given in \Cref{section_II}, and in \Cref{section_V} we provide some numerical results for multiple optimization problems. In \Cref{section_VI} we provide the conclusions of this paper and the outlook on potential future research directions. The proof of the main convergence result as well as further auxiliary results are presented in the \textcolor{black}{Appendix}.

    \textit{Notation:} The notation is rather standard. We denote the sets of real and positive numbers as $\R$ and~$\R^{+}$. Moreover, the expectation of any random variable  is denoted by $\E[\cdot]$, and the abbreviation i.i.d. is used for identical and independently distributed random variables. 
    $P \succ 0$ denotes a symmetric and positive definite matrix $P$
    and $\mathcal{C}^{2}$ denotes the class of twice continuously differentiable
    functions.

\section{Problem Statement and Main Result}
\label{section_II}
In this paper, we consider the problem of finding a global minimizer $x^{*} \in \R$ of a $\mathcal{C}^{2}$ function $J: \R \to \R$, i.e.,
\begin{equation}
\label{problem}
    x^{*} \in \underset{x \in \R}{\mathrm{argmin}}~J(x).
\end{equation}
To solve \eqref{problem}, we propose a discrete-time algorithm of the form
\begin{subequations}
    \label{sys_def}
    \begin{align}
        \label{sys_def_x}
        x_{k+1} &= x_k - \rho y_k + \hat{g}(y_k,w_{k}) \\
        \label{sys_def_y}
        y_{k+1} &= (1-\beta)y_k \notag  \\
        &+ \hat{h}(y_{k-1},w_{k-1})\big(J(x_k) - J(x_{k-1})\big), 
    \end{align}
\end{subequations}
where $\hat{g}, \hat{h}: \R^2 \to \R$, $\rho \in \R^{+}$, $\beta \in (0,2)$ and~$w_k \in \R$ is an i.i.d. zero-mean random variable.  We refer to $k \in \mathbb{N}$ 
as time steps and we refer to \eqref{sys_def_x} and \eqref{sys_def_y} as system \eqref{sys_def}. We will assume throughout this paper that $J$ has a {\em unique} global minimizer, and for the sake of presentation, we will assume it is moreover quadratic. 
In the following, we provide further
design elements for \eqref{sys_def} (presented together as a "Setup"), and an assumption on the objective function
which will ease our convergence analysis. We also provide a more in-depth discussion regarding the motivation for them in this section.
\begin{algdes}
    \label{assump:ghw}
    In system \eqref{sys_def},
    \begin{enumerate}[label=(\alph*)]
        \item \label{assump:ghw:1} for some $\varepsilon \in \R^{+}$, the functions ${\hat{g},\hat{h}: \R^2 \to \R}$ are defined as $\hat{h}(y,w) := \frac{h(w)}{|y| + \varepsilon}$ and $\hat{g}(y,w) := (|y| + \varepsilon)g(w)$, where $h,g: \R \to \R$ are odd functions.
        Furthermore, for all $w \in \R$ it holds that $\mathrm{sign}(g(w))=\mathrm{sign}(h(w))$ and $g(w)=h(w) = 0$ if and only if $w = 0$.

        \item \label{assump:ghw:2} $w_i \in \D:= \{-\w,\w\}$, $\w >0$, 
        $i \in \mathbb{N} \cup \{0\}$, is an i.i.d. discrete
        random variable with equal probability taking 
        the value~$-\w$ or $\w$.
    \end{enumerate}
\end{algdes}
\begin{assump}
    \label{assump:quadratic}
    The objective function $J$ in system \eqref{sys_def} is quadratic with~$J(x) := J^{*} + \frac{\mu}{2}(x-x^{*})^2$, where $\mu \in \R^{+}$, and~${x^{*},J^{*} \in \R}$. 
\end{assump}
We consider a quadratic objective function because 
many objective functions can be locally (around a local minimum) well approximated
by a quadratic function
and, consequently, it can be shown that our algorithms in fact find local minima of  
$\mathcal{C}^2$ functions given proper parameter tuning and suitable assumptions. However, due to space constraints and for clarity of exposition, we only deal with the case of a quadratic objective function in this paper.

For system \eqref{sys_def} with \Cref{assump:ghw}, under the above assumption, we aim to show that the system iterates $(x_k,y_k)$ converge in expectation to $(x^{*}, 0)$, with their variance converging to an~$\varepsilon$-neighbourhood of $0$.

The system \eqref{sys_def} is motivated by the need to control the variance, as for system \eqref{dtses} this is probably not possible. By adjoining an additional equation, where $y_k$ may be thought of as a gradient-storing moment term, it becomes easier to control the variance and to ensure it converges practically, as it will become clear in the proof of the main result. The difference between the behavior of these two systems is also shown in simulations in \Cref{section_V}, where, through comparison, the importance of the $y_k$-equation and the adaptive step size becomes evident. Similarly, it has been shown that by adding a low-pass filter the performance of ES systems may be improved (see for example \cite{labar2018gradient}). 
Furthermore, notice that the dependence of $\hat h$ on the delayed random variable $w_{k-1}$ is vital. If instead $\hat h$ would depend on $w_k$,
then it is easy to see, for example by considering the expectation of 
\eqref{sys_def}, that the system does not show an optimizing behavior.

\Cref{assump:ghw} \ref{assump:ghw:1} can be associated to phase shifted periodic exploration functions used for ES.
A simple choice for $h,g$ is the identity map, i.e., $h(w)=g(w)=w$.
Division and multiplication by $|y|+\varepsilon$ in $\hat h,\hat g$ is needed to control the variance, and the small positive constant $\varepsilon > 0$ is introduced to avoid division by zero. 
(These constructions become clear when studying the proof of our main result.) Moreover, \Cref{assump:ghw} \ref{assump:ghw:2} is motivated by replacing deterministic exploration sequences, as the ones for example in \cite{feiling_ali_christian2021gradient}, by stochastic ones. 
Further, although restrictive, \Cref{assump:quadratic} is used to keep the convergence proof rather simple. 

Throughout the paper, for the sake of clarity, we use the following abbreviations {${h_k := h(w_k)}$}, {${g_k:= g(w_k)}$}, with~{${\chi := \E\left[h_k^2\right]}$}, and~{$\psi := \E\left[g_k^2\right]$}, as well as~${\tilde{x}_k := x_k - x^{*}}$ at any $k \in \N \cup \{0\}$.

The main result of this paper is the exponential convergence in expectation of the system trajectories for any initial conditions $x_0, y_0 \in \R$, and exponential practical convergence of the variance of the system trajectories. 

\begin{thm}
    \label{theorem:convergence_means_square}
    Consider system \eqref{sys_def} with \Cref{assump:ghw}, suppose \Cref{assump:quadratic} holds, and let $\chi, \psi, \mu \in \R^{+}$ be arbitrary.
    Assume that~${\rho, \beta,\varepsilon}$ satisfy
    \begin{enumerate}
        \item $\beta > \mu\rho\sqrt{\chi\psi}$,
        \item $\rho \in (0, \rho_0]$, $\beta \in [1-\beta_0, 1+\beta_0]$, and $\varepsilon \in (0, \varepsilon_0]$,
    \end{enumerate}
    where ${ \rho_0, \beta_0, \varepsilon_0  \in \R^{+}}$ are sufficiently small. Then, for any~$x_0,y_0 \in \R$ the system trajectory~${(x_k, y_k)}$ converges exponentially in expectation to~$(x^{*},0)$ and its variance converges exponentially to an {$\varepsilon$-neighbourhood} of~$0$.
\end{thm}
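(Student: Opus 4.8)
The plan is to treat the convergence in expectation and the practical convergence of the variance as two largely separate analyses, each of which I would reduce, after averaging out the noise, to the Schur stability of a finite-dimensional linear recursion with a constant forcing term. Before either step, I would perform the algebraic reduction that exposes the learning mechanism. Writing $\delta_{k-1} := -\rho y_{k-1} + \hat g(y_{k-1},w_{k-1})$ so that $\tilde{x}_k = \tilde{x}_{k-1} + \delta_{k-1}$, \Cref{assump:quadratic} gives $J(x_k)-J(x_{k-1}) = \mu\tilde{x}_{k-1}\delta_{k-1} + \tfrac{\mu}{2}\delta_{k-1}^2$. Substituting the definitions of $\hat g,\hat h$ from \Cref{assump:ghw} \ref{assump:ghw:1}, the factor $|y_{k-1}|+\varepsilon$ cancels in $\hat h(y_{k-1},w_{k-1})\,\delta_{k-1}$ and produces the product $h_{k-1}g_{k-1}$. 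This is the crux of the construction: the delayed noise $w_{k-1}$ enters both $\hat h(y_{k-1},w_{k-1})$ and, through $\delta_{k-1}$, the current iterate $x_k$, so $w_{k-1}$ appears quadratically and its even moment survives averaging. Using that $w_k\in\{-\w,\w\}$ is equiprobable together with the oddness and sign conditions on $g,h$, I would record $\E[g_k]=\E[h_k]=0$, $\E[g_k^2]=\psi$, $\E[h_k^2]=\chi$, $\E[g_kh_k]=\sqrt{\chi\psi}$ and $\E[h_kg_k^2]=0$; the identity $\E[g_kh_k]=\sqrt{\chi\psi}$ is exactly what makes $\sqrt{\chi\psi}$ appear in the hypotheses.

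For convergence in expectation I would exploit that $x_k,y_k$ are independent of $w_k$ and that $y_k,y_{k-1},\tilde{x}_{k-1}$ are independent of $w_{k-1}$. Averaging the reduced equations then yields the closed linear delay recursion
\begin{align*}
\E[\tilde{x}_{k+1}] &= \E[\tilde{x}_k]-\rho\,\E[y_k],\\
\E[y_{k+1}] &= (1-\beta)\E[y_k] + \mu\sqrt{\chi\psi}\,\E[\tilde{x}_{k-1}] - \mu\rho\sqrt{\chi\psi}\,\E[y_{k-1}].
\end{align*}
Seeking modes $z^k$ and eliminating variables, the nontrivial characteristic factor is the monic quadratic $z^2-(2-\beta)z+\big(1-\beta+\mu\rho\sqrt{\chi\psi}\big)$. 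Applying the Jury/Schur criterion for a quadratic, its roots lie strictly inside the unit disk precisely when $\beta\in(0,2)$ and $\mu\rho\sqrt{\chi\psi}<\beta$; the latter is exactly hypothesis~1. Hence the mean recursion is exponentially stable and $(\E[x_k],\E[y_k])\to(x^{*},0)$ geometrically.

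For the variance I would set up the dynamics of the second and delayed cross moments. Squaring \eqref{sys_def_x} and averaging over $w_k$ gives $\E[\tilde{x}_{k+1}^2\mid\cdot] = (\tilde{x}_k-\rho y_k)^2 + \psi(|y_k|+\varepsilon)^2$, which already isolates the irreducible forcing $\psi\varepsilon^2$: once $y_k$ is small the exploration amplitude in $\hat g$ is of order $\varepsilon$, so the variance cannot decay below $O(\varepsilon^2)$. This is precisely the role of the factor $|y|+\varepsilon$ in $\hat g$ and of its reciprocal in $\hat h$, the latter preventing the second-moment map from amplifying. Collecting $\E[\tilde{x}_k^2],\E[y_k^2],\E[\tilde{x}_k y_k]$ together with the delayed cross moments forced by the index-$(k-1)$ coupling, I would obtain an affine inequality $V_{k+1}\le \Gamma V_k + c\,\varepsilon^2\mathbf{1}$ for the moment vector $V_k$, where the non-polynomial $|y|$ contributions (e.g. the cross term $2\varepsilon\,\E[|y_k|]$) are dominated via the Cauchy--Schwarz and Young inequalities so that they feed back only through $\E[y_k^2]$ and an $O(\varepsilon^2)$ forcing.

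It remains to show that the matrix $\Gamma$ governing this moment recursion is Schur. I would establish this by a perturbation argument around a tractable limit: as $\rho\to0$, $\beta\to1$ and $\varepsilon\to0$ the $x$--$y$ coupling and the forcing weaken while the $y$-block becomes strongly contractive (the factor $1-\beta$ is small), so the spectral radius of $\Gamma$ is below $1$ there; continuity then keeps it below $1$ throughout the regime of hypothesis~2, in conjunction with hypothesis~1. A Schur $\Gamma$ turns the affine recursion into geometric convergence of $V_k$ to a fixed point of size $O(\varepsilon^2)$, and combining this with the already-proven geometric decay of the means converts raw second moments into variances, yielding exponential practical convergence of the variance to an $\varepsilon$-neighbourhood of $0$. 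The hard part will be exactly this last stage: assembling the correct finite, closed system of (delayed) second moments, controlling the non-smooth $|y|$ terms uniformly, and proving the spectral radius of $\Gamma$ stays below $1$ robustly over the small-parameter regime. This is where the ``sufficiently small'' thresholds $\rho_0,\beta_0,\varepsilon_0$ are forced, in contrast to the clean closed-form mean analysis.
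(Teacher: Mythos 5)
Your treatment of the mean is correct and coincides with the paper's: the same Taylor expansion, the same cancellation of $|y_{k-1}|+\varepsilon$ producing $\E[h_{k-1}g_{k-1}]=\sqrt{\chi\psi}$, the same $2\times2$ recursion with characteristic polynomial $\lambda^2-(2-\beta)\lambda+1-\beta+\mu\rho\sqrt{\chi\psi}$, and the same Jury conditions recovering hypothesis 1. The variance analysis also follows the paper's general template (a closed linear recursion for a vector of current and delayed second moments, cf.\ \eqref{A_ms_matrix}, plus an $\varepsilon$-forcing). However, two of your steps there contain genuine gaps.

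First, your argument that the moment matrix $\Gamma$ is Schur ``by perturbation around the tractable limit $\rho\to0$, $\beta\to1$, $\varepsilon\to0$, where the coupling weakens and the $y$-block is strongly contractive'' is false. In that limit the $x^2$-row reads $\E[\tilde{x}_{k+1}^2]=\E[\tilde{x}_k^2]+\psi\E[y_k^2]$ (a unit-diagonal integrator: the diagonal entry $1$ never vanishes), and the $y^2$-row still feeds back $\mu^2\gamma^2\E[\tilde{x}_{k-1}^2]$ with a positive sign; the resulting positive-feedback loop around a pole at $1$ gives a characteristic factor with $p(1)=-\mu^2\gamma^2\psi<0$, i.e.\ a real eigenvalue strictly greater than $1$. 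The limit is therefore \emph{not} contractive, and continuity from it cannot yield stability — the relevant eigenvalue approaches the unit circle (or lies outside it) as $\rho\to0$, and whether it is pushed inside for small $\rho>0$ is decided by the sign of a first-order correction coming from the $-2\rho\,\E[\tilde{x}_ky_k]$ coupling. This is exactly the delicate Jury computation of \Cref{lemma:stable_matrices} (the condition $p(1)>0$ there is won by a $\rho^3$ term beating a $\rho^4$ term, under a specific scaling of $\psi$ with $\rho$); your proposal replaces the hardest step of the proof with a heuristic that points in the wrong direction.

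Second, a one-sided affine bound $V_{k+1}\le\Gamma V_k+c\varepsilon^2\mathbf{1}$ does not close the argument: the moment vector contains the cross terms $\E[\tilde{x}_ky_k]$, which can be negative, and $\Gamma$ has negative entries, so an upper inequality alone does not propagate forward. The paper needs \emph{two-sided} element-wise bounds $(A_{ms}+\varepsilon Q_1)\E[\zeta_k]+\varepsilon b_1\le\E[\zeta_{k+1}]\le(A_{ms}+\varepsilon Q_2)\E[\zeta_k]+\varepsilon b_2$ together with the dedicated comparison result \Cref{lemma:matrix_upper_lower_bounds}. Relatedly, your claim of $O(\varepsilon^2)$ forcing is too optimistic: controlling $2\varepsilon\psi\,\E[|y_k|]$ with a forcing of order $\varepsilon^2$ via Young's inequality costs an $O(1)$ additive perturbation of the matrix entry multiplying $\E[y_k^2]$, which could destroy the Schur property. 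The paper instead uses $\E[|y|]\le\tfrac14+\E[y^2]$, accepting an $O(\varepsilon)$ constant forcing and an $O(\varepsilon)$ matrix perturbation — which is precisely why the theorem concludes convergence to an $\varepsilon$-neighbourhood rather than an $\varepsilon^2$-neighbourhood.
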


The proof of \Cref{theorem:convergence_means_square} is rather lengthy and can be found in the \textcolor{black}{Appendix}, which contains several additional technical results about the proposed system \eqref{sys_def}. For example, the equations for the dynamics of the expectation as well as for the dynamics of the variance of \eqref{sys_def} are presented and derived in the \textcolor{black}{Appendix}, which are central for the convergence and stability properties established in \Cref{theorem:convergence_means_square}.
In addition, a result on the convergence on sequences upper- and lower-bounded by perturbed stable linear time-invariant systems is established (see Proposition \ref{lemma:matrix_upper_lower_bounds}).
Due to space limitation, we present these results and proofs in the \textcolor{black}{Appendix}.

\section{Simulations}
\label{section_V}
In this section, we present  numerical results we have obtained by implementing systems \eqref{dtses} and \eqref{sys_def}. The former is also considered to highlight the differences in performance with respect the latter. For simulations, we used~${2\cdot 10^{5}}$ sample trajectories for the one-dimensional optimization problems and\textcolor{black}{, due to memory restrictions,}  $10^{5}$ for the multidimensional one to estimate the expectation and variance of the sample trajectories. Furthermore, the term {$1$-$\sigma$} denotes the square root of the variance of the sample trajectories, i.e. one standard deviation. Moreover, in all simulations, we set~${\varepsilon  = 10^{-7}}$, and
\begin{align*}
    \E\left[w_k^2\right] &:= 1, & g(w_k) &:= \sqrt{\psi}w_k, & h(w_k) &:= \sqrt{\chi}w_k, 
\end{align*}
where $\chi$ and $\psi$ are positive parameters adapted for each simulation. Next, for each simulation we use one of the following objective functions
\begin{subequations}
    \begin{align}
        \label{one_dim_q}
        J(x) &= 10000 + (x-x^{*})^2, \\
        \label{x2cosx}
        J(x) &= 10000 + x^2\cos(0.2x), \\
        \label{one_dim_convex}
        J(x) &= 10000 + \log(1+e^{x-x^{*}}) + \log(1+e^{x^{*}-x}),\\
        \label{multi_dim_q}
        J(x) &= 10000 + (x - x^{*})^{\top}H(x-x^{*}),
    \end{align}
\end{subequations}
where $H \succ 0$, and $x^{*} \in \R$ is chosen differently for each simulation. In addition, due to space constraints, we show predominantly the numerical results for the state $x$ (and only once for $y$).

We first present the numerical results for the objective function \eqref{one_dim_q}, with $x^{*} = 25$ and initial conditions $x_0 = -40$, and $y_0 \distas{} \mathcal{U}(-5,10)$, with $y_0$ being sampled only once prior to the execution of all the simulations. Setting 
\begin{align}
\begin{split}
    \label{param:1}
    \rho = 0.12,~~\chi = \frac{121}{4}, ~~ \psi = 0.01, ~~\beta = 0.75, 
\end{split}    
\end{align}
we obtain the results shown on \Cref{fig:quadratic_1}. 
We observe the $x$-trajectory in expectation to converge to $x^{*}$, with the variance decreasing. To compare the theoretical results (obtained in the \textcolor{black}{Appendix}) with the empirically obtained ones, for the given initial conditions we first simulate the dynamics of the expectation and the upper bound dynamics of the variance given by
\begin{align*}
    \E\left[\begin{matrix}
        \Tilde{x}_{k+1} \\
        y_{k+1}
    \end{matrix}\right] &= \begin{bmatrix}
        1 & -\rho \\
        \mu\gamma & 1-\beta
    \end{bmatrix}\E\left[\begin{matrix}
        \Tilde{x}_{k} \\
        y_{k}
    \end{matrix}\right], \\
    \E\left[\zeta_{k+1}\right] &= (A_{ms} + \varepsilon Q_2)\E\left[\zeta_k\right] + \varepsilon b_2,
\end{align*}
with $\zeta_k$ and $A_{ms}$ as defined in the \textcolor{black}{Appendix}, and~${Q_2,b_2}$ as defined in the proof of \Cref{theorem:convergence_means_square} (see \textcolor{black}{Appendix}). Here, for the simulations we used $\varepsilon_0 = \varepsilon$. Thereafter, we calculate the~{$1$-$\sigma$} bound at every~$k$ as~${\sqrt{\E\left[\Tilde{x}_k^2\right] - \left(\E\left[\Tilde{x}_k\right]\right)^2}}$. These results, alongside the empirically obtained $1$-$\sigma$ bound are plotted on \Cref{fig:quadratic_1.2}, where we observe the empirically obtained $1$-$\sigma$ bound to be almost identical to the theoretically obtained upper bound on $1$-$\sigma$, albeit still lower.
\begin{figure}[!t]
    \centering
    \includegraphics[width=8cm]{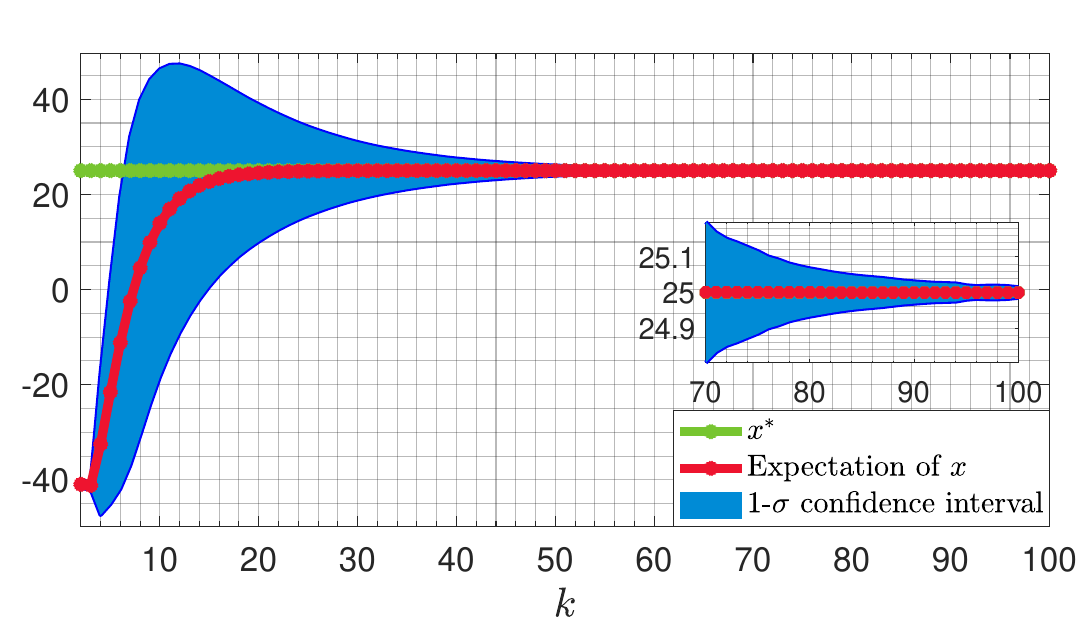}
    \caption{Simulation result for system \eqref{sys_def} with objective function  \eqref{one_dim_q}
    }
    \label{fig:quadratic_1}
\end{figure}
\begin{figure}[!t]
    \centering
    \includegraphics[width=8cm]{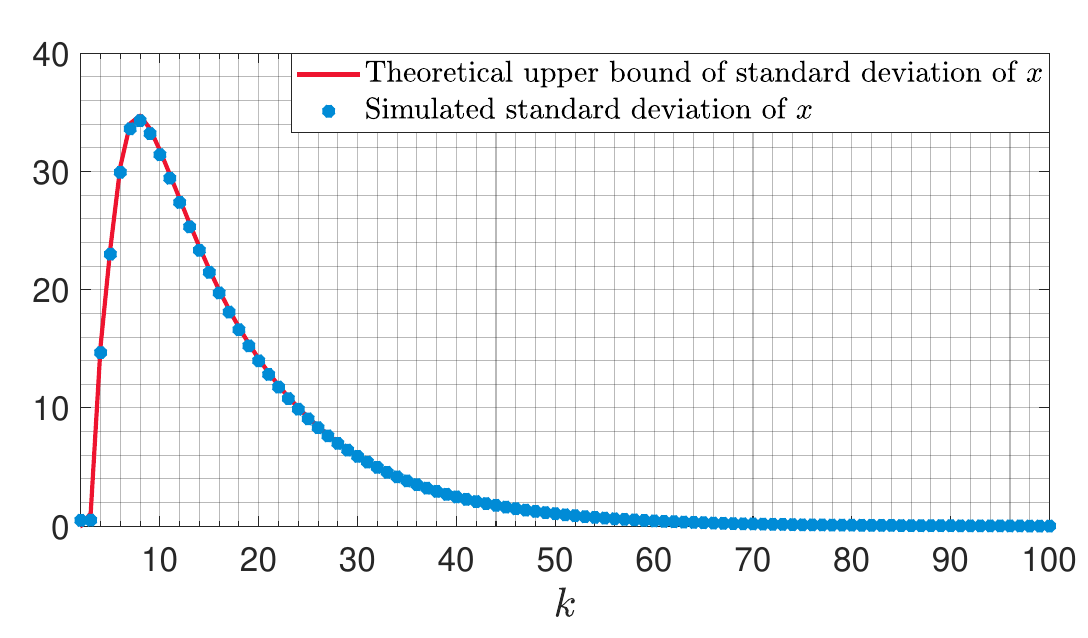}
    \caption{Standard deviation of system \eqref{sys_def}
    with the objective function \eqref{one_dim_q} }
    \label{fig:quadratic_1.2}
\end{figure}
To further illustrate the obtained results, we present \Cref{fig:quadratic_1.3} and \Cref{fig:quadratic_1.4}. There we observe $5$ randomly sampled trajectories generated by \eqref{sys_def} from the simulation setup of \Cref{fig:quadratic_1}, where each trajectory $(x_k,y_k)$ is plotted with one corresponding colour on both figures. We observe that the $x$ trajectories converge to $x^{*} = 25$, and the $y$ trajectories converge to $y_E = 0$.
\begin{figure}[!t]
    \centering
    \includegraphics[width=8cm]{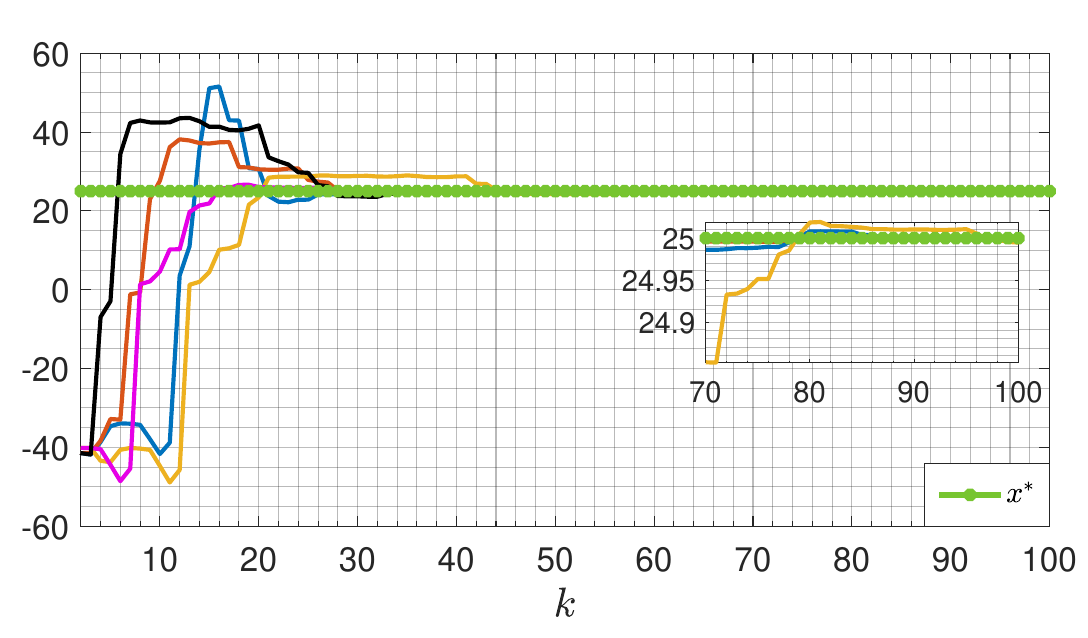}
    \caption{Sample trajectories of the $x-$subsystem in \eqref{sys_def} with objective function  \eqref{one_dim_q}
    }
    \label{fig:quadratic_1.3}
\end{figure}
\begin{figure}[!t]
    \centering
    \includegraphics[width=8cm]{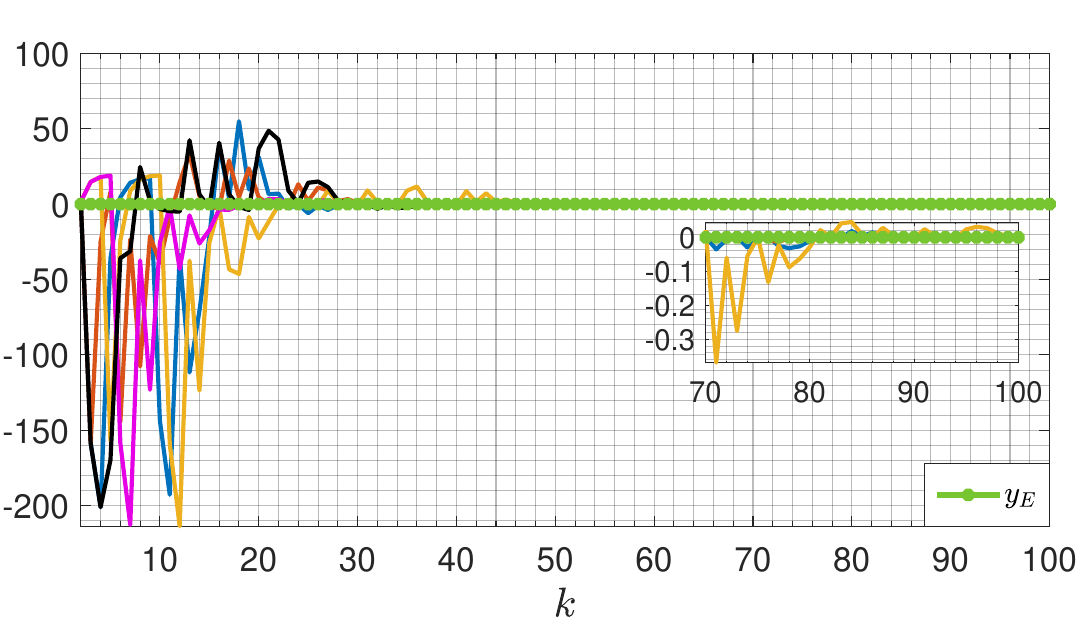}
    \caption{Sample trajectories of the $y-$subsystem in \eqref{sys_def} with objective function  \eqref{one_dim_q}
    }
    \label{fig:quadratic_1.4}
\end{figure}

Next, to further emphasize that the convergence result is global, we provide another numerical result, shown on \Cref{fig:quadratic_2}, where we use the same parameters as in \eqref{param:1}. However, for the objective function \eqref{one_dim_q} here we set~${x_0 = -4\cdot10^5}$ and $x^{*} = 2.5\cdot10^5$, with $y_0$ initialized as in the previous simulation. We here note the high level of similarity between the plots in \Cref{fig:quadratic_1} and \Cref{fig:quadratic_2}, which supports the claim that the trajectory convergence type is indeed global and exponential in expectation and variance. 
\begin{figure}[!t]
    \centering
    \includegraphics[width=8cm]{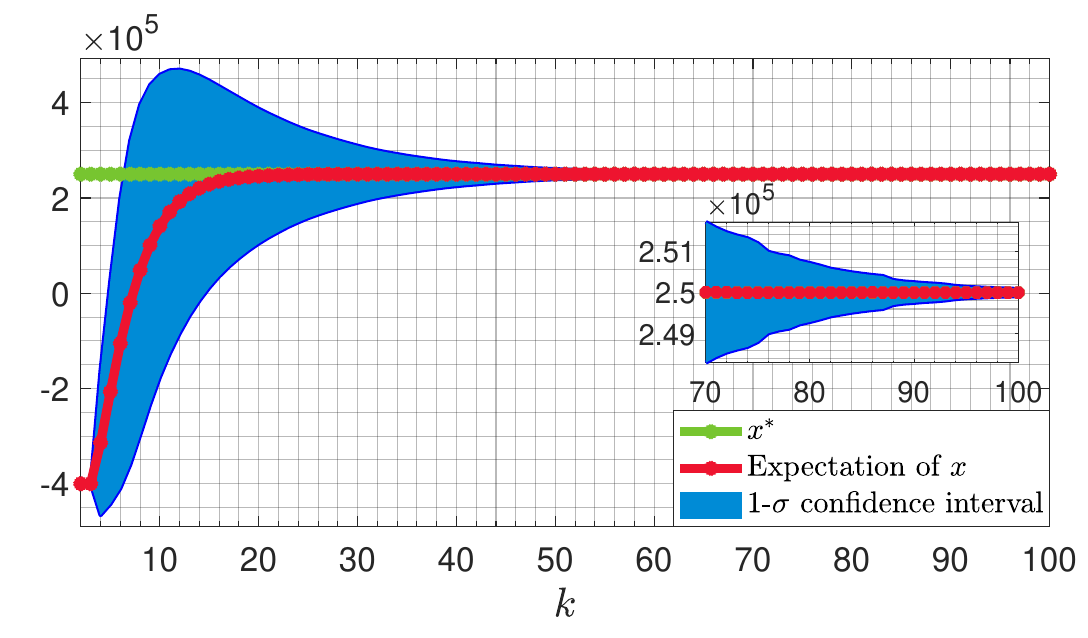}
    \caption{Simulation result for system \eqref{sys_def} with objective function \eqref{one_dim_q}}
    \label{fig:quadratic_2}
\end{figure}
To provide numerical evidence that \Cref{assump:quadratic} can be relaxed, we present in \Cref{fig:x2cosx} simulation results for the non-quadratic objective \eqref{x2cosx} whose Taylor expansion around $x_1^{*} = 0$ and~${x_2^{*} = 48.15}$ approximates a quadratic function. For the local optimization problem in the neighbourhood of $x_1^{*}$ we use the same simulation parameters as in \eqref{param:1}, and for the one in the neighbourhood of $x_2^{*}$ we set $\rho = 0.05$ and~${\chi = 0.09}$, with the other parameters remaining the same. In addition, we chose the initial conditions as~${x_0 = -2}$ and~${x_0 = 40}$ respectively, and with $y_0$ chosen for both cases as in the previous simulations.
One can observe that the sample trajectories show great similarity (in terms of mean and $1$-$\sigma$ bound) to the ones in \Cref{fig:quadratic_1}, and moreover the algorithm converges to the local minimizer.
\begin{figure}[!t]
    \centering
    \includegraphics[width=8cm]{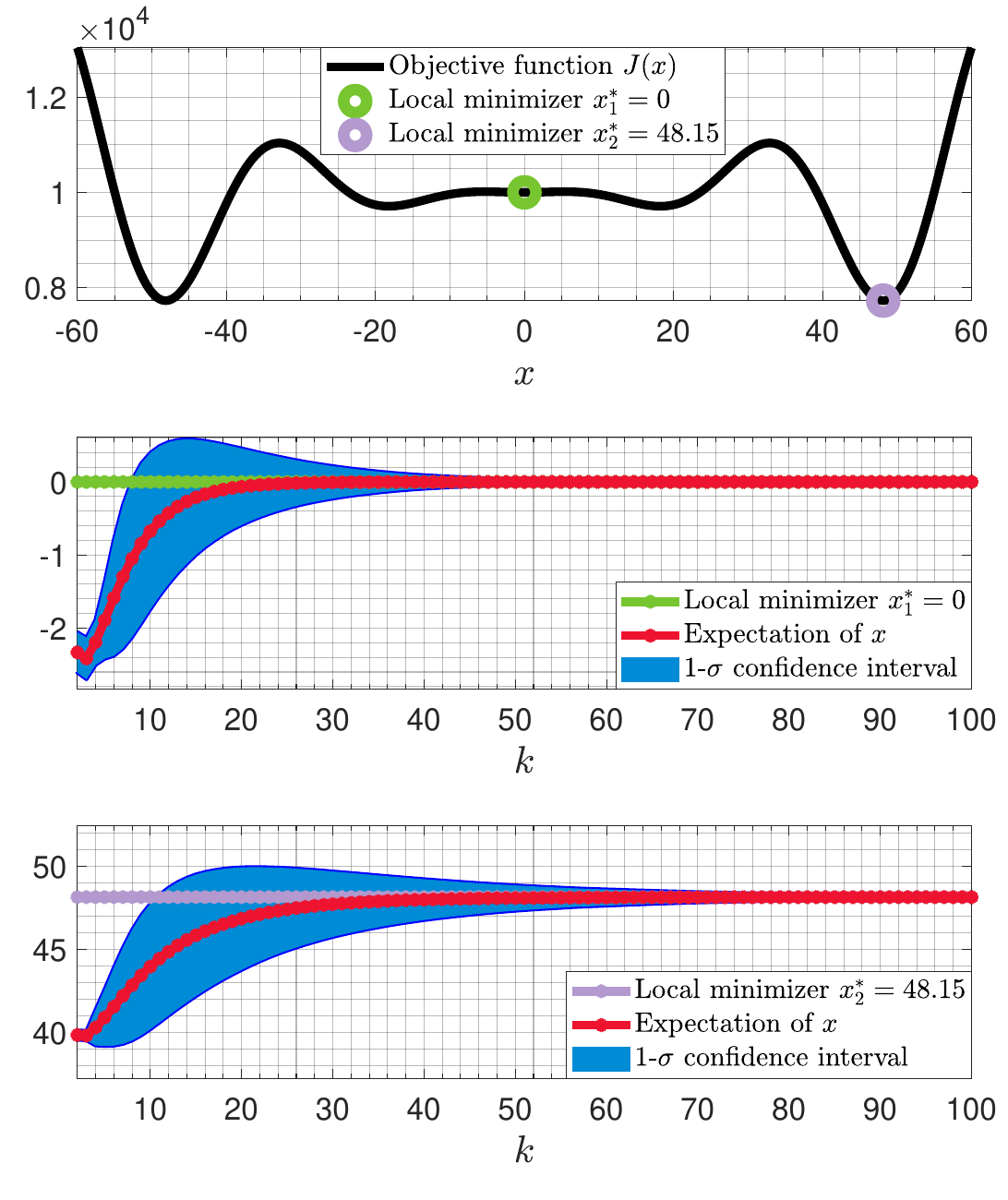}
    \caption{Simulation result for system \eqref{sys_def} with objective function \eqref{x2cosx}, for two different initial conditions}
    \label{fig:x2cosx}
\end{figure}

Next we provide numerical evidence that the global exponential practical type of convergence result is due to the use of the adaptive step size, i.e. the maps $\hat{h}$ and $\hat{g}$. Hence, we present \Cref{fig:quadratic_2.2}, where we do not use an adaptive step size, rather we set 
\begin{align*}
    \hat{h}(y,w)  &= h(w) = \sqrt{\chi}w, & \hat{g}(y,w)  &= g(w) = \sqrt{\psi}w,
\end{align*}
with $\rho = 0.05$, $\chi = 0.81$, and $ \beta = 0.4$, 
and where the remaining parameters are as in the previous simulations. We observe on \Cref{fig:quadratic_2.2} that the trajectories differ from the previously shown ones in the sense of the existence of a steady state variance, and moreover convergence is not globally ensured, rather for a different pair of $(x_0, x^{*})$ and a different choice of $y_0$ divergence may occur. This is expected, as our analysis (which is not presented in this paper) shows that this system may at most be semi-globally practically stable in the expectation of the second moment, thereby choosing $|x_0 - x^{*}|$ to be sufficiently large will ensure divergence of the trajectories for the chosen parameters.

\begin{figure}[!t]
    \centering
    \includegraphics[width=8cm]{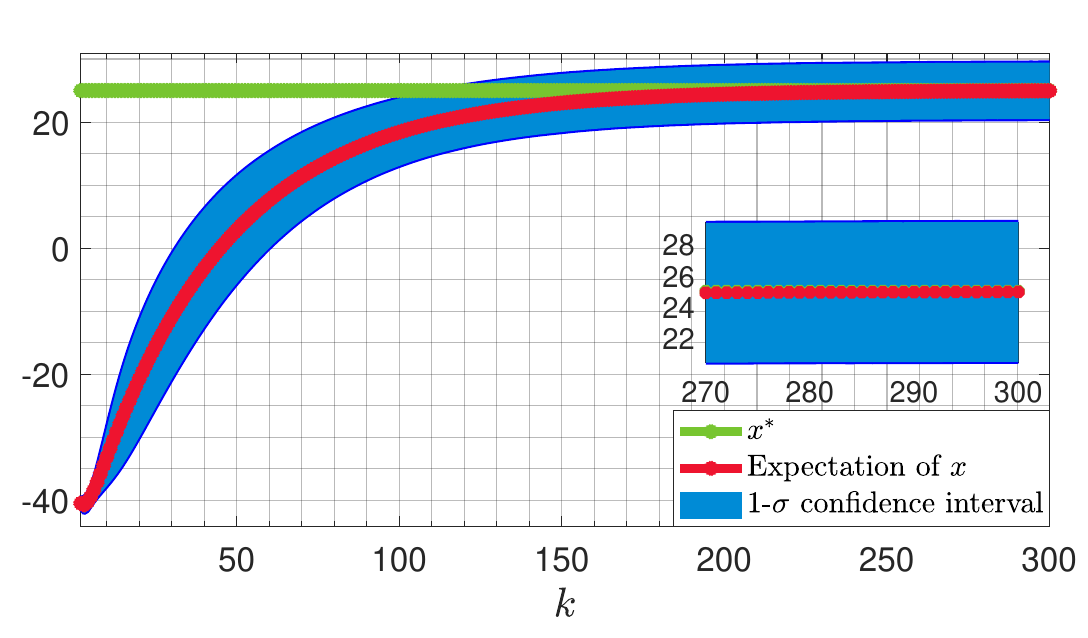}
    \caption{Simulation result for system \eqref{sys_def} with objective function \eqref{one_dim_q} and no adaptation of the step size}
    \label{fig:quadratic_2.2}
\end{figure}

In addition, we have simulated system \eqref{dtses} in the form of 
\begin{align}
    \label{first_order_system_sim}
    x_{k+1} = x_{k} - h_{k-1}(J(x_k) - J(x_{k-1})) + g_k,
\end{align}
where $h_k := 10^{-3}w_k$ and $g_k := 9w_k$. The results are shown on \Cref{fig:quadratic_2.3}, where we observe the steady state variance to be significantly larger than in \Cref{fig:quadratic_2.2}, while in this case, according to our analysis, only local convergence may be proven. If a time-dependent map $g_k$ were to be used, for example $g_k := \frac{9}{\sqrt{k}}w_k$, the steady state variance could be reduced significantly at the cost of convergence speed.

\begin{figure}[!t]
    \centering
    \includegraphics[width=8cm]{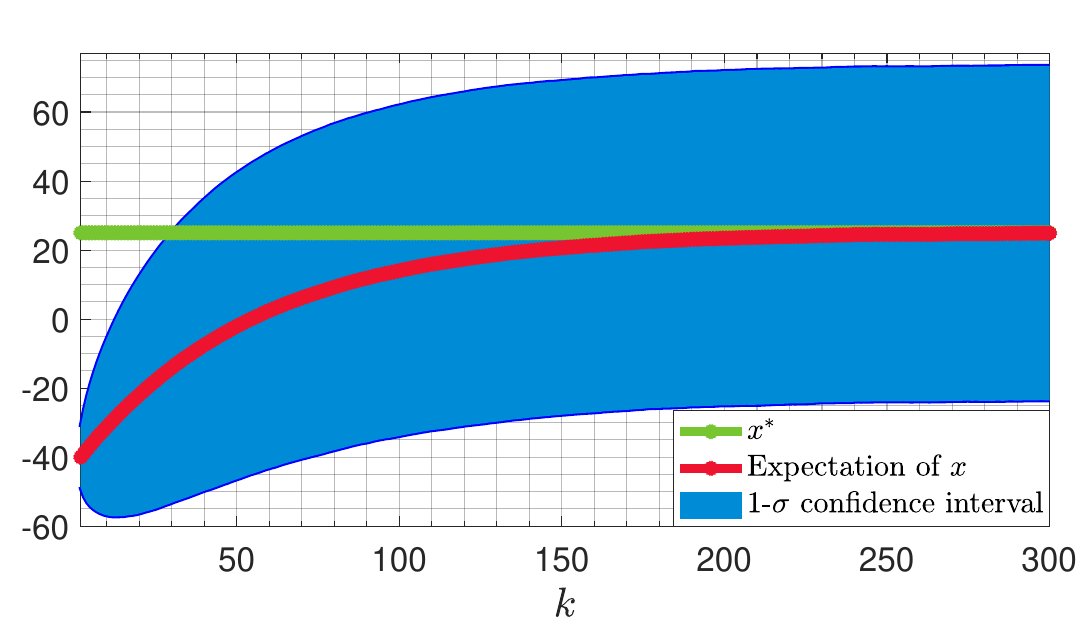}
    \caption{Simulation result for system \eqref{first_order_system_sim} with objective function \eqref{one_dim_q}}
    \label{fig:quadratic_2.3}
\end{figure}

We henceforth present simulations where the maps $\hat{h}, \hat{g}$ comply with \Cref{assump:ghw} \ref{assump:ghw:1}, i.e. an adaptive step size has been used. We next present the numerical results for system \eqref{sys_def} for the convex objective function \eqref{one_dim_convex}, where the initial point is chosen as $x_0 = -240$ and $x^{*} = 350$, and $y_0$ is chosen identically as for the simulation presented in \Cref{fig:quadratic_1}. The system parameters are chosen as
\begin{align*}
    \rho = 0.55, && \chi = 100, && \psi = 0.36, && \beta = 0.5, && \varepsilon = 10^{-7},
\end{align*}
and $h,g$ are chosen as for the simulations in \Cref{fig:quadratic_1}. The results are shown on \Cref{fig:quadratic_3}, and they show exact convergence of the expectation, however only a practical convergence for the variance. It remains to be mentioned that we have not observed a divergence result by changing the distance between the initial condition $x_0$ and the global minimizer $x^{*}$, similar as in the case of the quadratic objective optimization problem in Figures \ref{fig:quadratic_1} and \ref{fig:quadratic_2}.
\begin{figure}[!t]
    \centering
    \includegraphics[width=8cm]{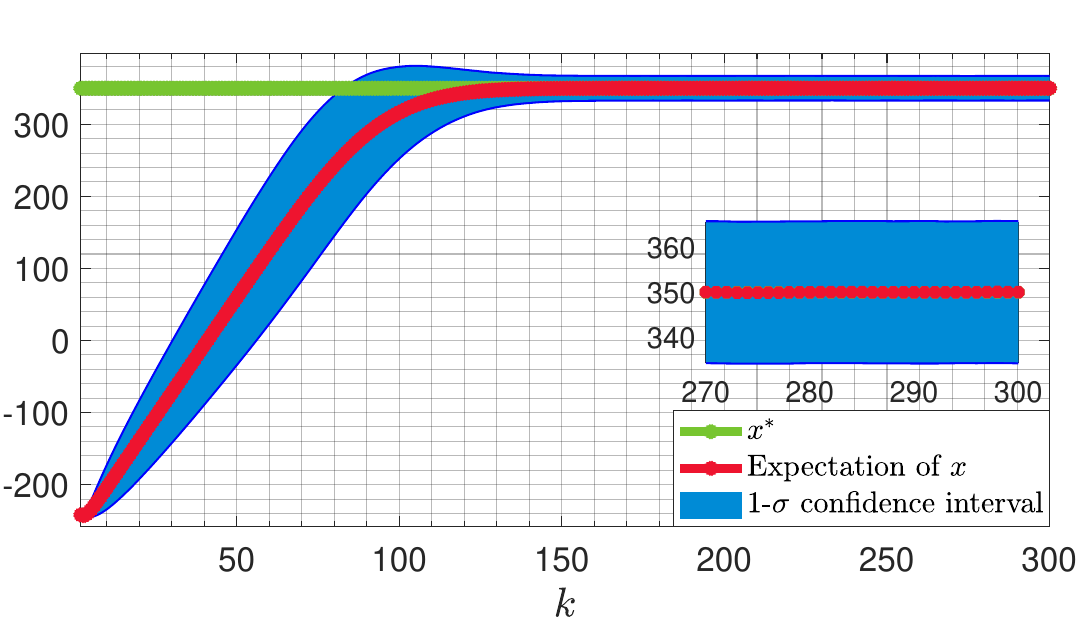}
    \caption{Simulation result for system \eqref{sys_def} with objective function \eqref{one_dim_convex}}
    \label{fig:quadratic_3}
\end{figure}

Finally, we present our numerical results for multidimensional optimization. 
In this simulation setup, we have~${x_k \in \R^n}$ and $y_k \in \R^n$ at any $k \geq 0$, where $n \in \N$. For the objective function \eqref{multi_dim_q} the implemented multidimensional system has the form
\begin{subequations}
    \label{system_multidim}
    \begin{align}
        x_{k+1} &= x_k - \rho y_k + \sqrt{\psi} \Omega_k(\mathrm{abs}(y_k) + \varepsilon\mathbf{1}_n)\\
        y_{k+1} &= (1-\beta)y_k + \sqrt{\chi}\lVert \mathrm{abs}(y_{k-1}) + \varepsilon\mathbf{1}_n\rVert_2^{-2}  \Omega_{k-1} \notag \\ & \times(\mathrm{abs}(y_{k-1}) + \varepsilon\mathbf{1}_n)(J(x_k) - J(x_{k-1})),
    \end{align}
\end{subequations}
where $\mathrm{abs}(y)$ denote the element-wise absolute values of a vector $y \in \R^m$, $\mathbf{1}_n$ is an $n$ dimensional vector of ones, and $\Omega_k$ for all $k \geq 0$ is a diagonal matrix of i.i.d. random variables given as
\begin{align*}
    \Omega_k &= \begin{bmatrix}
        w_k^{(1)} & 0 & \dots & 0 \\
        0 & w_k^{(2)} & \dots & 0 \\
        \vdots & \vdots & \ddots & \vdots \\
        0 & 0 & \dots & w_k^{(n)}
    \end{bmatrix}, \\ 
    w_k^{(j)} &\distas{} \D \hspace{0.25cm} \forall j \in \{1,2,\dots,n\}.
\end{align*}
The objective function \eqref{multi_dim_q} is defined by $n=3$,
\begin{align*}
    x^{*} &= \begin{bmatrix}
         5.2 \\
         1.23 \\
        -3.2 
    \end{bmatrix} \cdot 10^{5}, \hspace{0.5cm}
    H  = \begin{bmatrix}
        0.7 & 0.1 & 0.2 \\
        0.3 & 0.4 & 0.3 \\
        0.4 & 0 & 0.5
    \end{bmatrix}.
\end{align*}
The matrix $H$ is positive definite and thus \eqref{multi_dim_q} is strongly convex. The algorithm parameters are chosen as
\begin{align*}
    \rho &= 0.25,& \chi &= 0.2025, & \psi &= 0.01, & \beta &= 0.93,
\end{align*}
and $\varepsilon$ as in the previous simulations. The simulation results are shown on \Cref{fig:quadratic_5}, where we observe that the expectation of the sample trajectories of $x$ converges to the global minimizer $x^{*}$. 
Hence, this simulation indicates that similar convergence results to the one dimensional case also hold in higher dimensions. 
We note that the convergence rate is slower than in the one dimensional case, but also we did not perform a parameter tuning.
\begin{figure}[!t]
    \centering
    \includegraphics[width=8cm]{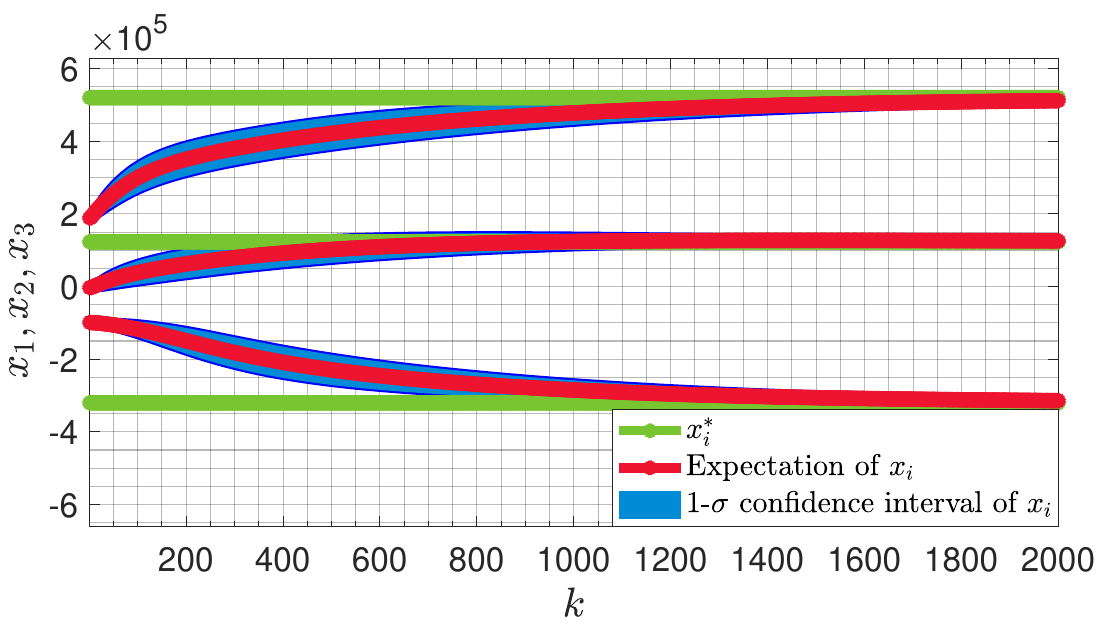}
    \caption{Simulation result for system \eqref{system_multidim} with objective function \eqref{multi_dim_q} }
    \label{fig:quadratic_5}
\end{figure}

\section{Conclusions and Outlook}
\label{section_VI}
In this paper, we have proposed a novel discrete-time stochastic optimization and ES algorithm, initially motivated by a class of nonlinear stochastic delay-differential equations. 
In particular, the two underlying ideas and constructions of the proposed algorithm were 
\begin{enumerate}[label = \roman*)]
    \item to replace deterministic exploration sequences by stochastic (time-delayed) ones, and 
    \item to introduce an adaptive step size mechanism (in terms of $\hat h, \hat g$ 
    and a (moment-like) dynamics $y_k$).
\end{enumerate}
For the case of a one-dimensional unconstrained quadratic optimization problem, we have proven convergence in expectation to the global optimizer, and convergence of the variance to a user-determined $\varepsilon$-neighbourhood of zero for a suitable set of system parameters. 
A convergence proof for one-dimensional optimization problems is rather a special case but allows to keep technical
steps rather simple (though lengthy) and helps to clarify ideas and proof steps. It is not uncommon in the literature to consider one dimensional cases, as for example, \cite{manzie_krstic2007discrete, zhu_fridman2021continuous, choi_krstic2002discrete_time_extremum} have also considered a one-dimensional optimization problem.
Further, we have presented several simulations, amongst which ones who indicate that the presented algorithm may be used for a broader class of optimization problems. In particular, we have shown the simulation results for a one-dimensional non-quadratic convex optimization problem, a one-dimensional "locally" quadratic problem, and with a suitable adaptation for a multidimensional quadratic optimization problem.

For future research, we believe it would be interesting to study other distributions for the exploration sequences, for instance zero-mean Gaussian random variables. 
One advantage of the proposed stochastic algorithm in the multi-dimensional case, for example in comparison to the deterministic algorithm proposed in \cite{feiling_ali_christian2021gradient},
is that the exploration sequences can be easily
designed by simply choosing i.i.d. random variables for each dimension (decision variable), as already indicated in our simulation results. 
Hence, it is an important future research goal to establish a proof of the multidimensional case, which is currently investigated. 
Other future research directions include the investigation of quadratic and convex optimization problems with linear inequality constraints or unconstrained optimization with strongly convex objective functions, and comparing the convergence of the sequences generated by \eqref{sys_def} to those of existing zero-order optimization methods \cite{ZO_optimization_2020primer}. 

\newpage
\bibliographystyle{unsrt}  
\bibliography{references.bib}

\newpage
\section{Appendix}

\label{section_appendix}

\subsection{Auxiliary Results}

In this section, we prove several properties of \eqref{sys_def}
including \Cref{theorem:convergence_means_square}. 
Regarding notation, in the following we denote by $A^{(i,*)}$ and~$b^{(i)}$ the $i$-th row of matrix $A$ and $i$-th element of vector~$b$, respectively.

The first result is interesting by its own and used in
the proof of  \Cref{theorem:convergence_means_square} to ensure global exponential practical convergence of the variance to $0$ of almost every trajectory of system \eqref{sys_def} for some small~$\varepsilon > 0$. One may observe that it is related to the Comparison Lemma, i.e. Lemma 3.4 in \cite{khalil2002nonlinear}, at least in discrete-time, however to the best of our knowledge, existing similar results,  for instance Corollary 13.8 in \cite{haddad2008nonlinear}, do not include this results.

\begin{proposition}
    \label{lemma:matrix_upper_lower_bounds}
    Let $\eta_0 \in \R^{+}$ and $\theta_k \in \R^n$ for all $k \geq 0$. Suppose that $\theta_0$ is arbitrary and moreover at every $k$ it holds element-wise that
    \begin{align*}
        (A + \eta Q_1)\theta_k + \eta b_1 \leq \theta_{k+1} \leq (A + \eta Q_2)\theta_k + \eta b_2,
    \end{align*}
    where $\eta \in (0,\eta_0]$, $A, Q_1, Q_2 \in \R^{n \times n}$ and~$ b_1, b_2 \in \R^n$, and all the eigenvalues of~$A$ are inside the unit circle. If $\eta_0$ is sufficiently small, then for any $\eta \in (0, \eta_0]$ the sequence $\theta_k$ converges exponentially to an $\eta$-neighbourhood of~$0$.
\end{proposition}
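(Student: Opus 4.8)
The plan is to convert the two-sided element-wise sandwich into a single exact recursion driven by a residual that can be controlled in norm, and then to treat the problem as a Schur-stable linear system subject to a small state-dependent perturbation.

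First I would set, for each $k$, $r_k := \frac{1}{\eta}(\theta_{k+1} - A\theta_k) \in \R^n$, which is well defined since $\eta > 0$. Subtracting $A\theta_k$ from the hypothesis and dividing by $\eta$ shows that $r_k$ satisfies, element-wise, $Q_1\theta_k + b_1 \leq r_k \leq Q_2\theta_k + b_2$, so that $\theta_{k+1} = A\theta_k + \eta r_k$ holds exactly. Since each component $r_k^{(i)}$ lies in the interval with endpoints $(Q_1\theta_k+b_1)^{(i)}$ and $(Q_2\theta_k+b_2)^{(i)}$, it obeys $|r_k^{(i)}| \leq \max\bigl(|(Q_1\theta_k+b_1)^{(i)}|,\, |(Q_2\theta_k+b_2)^{(i)}|\bigr)$, and hence there exist constants $c_1, c_2 \geq 0$ depending only on $Q_1, Q_2, b_1, b_2$ with $\|r_k\| \leq c_1\|\theta_k\| + c_2$ for all $k$. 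This is the decisive step and, at the same time, the place where the two-sidedness of the assumption is essential: a one-sided inequality would not bound $\|r_k\|$ unless $A$ were assumed order-preserving (entrywise nonnegative), whereas here no sign structure on $A$ is needed.

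Second, since all eigenvalues of $A$ lie strictly inside the unit circle, its spectral radius satisfies $\rho(A) < 1$, and therefore there exists a vector norm $\|\cdot\|_*$ on $\R^n$ (equivalently, a Lyapunov matrix $P \succ 0$ solving $A^\top P A - P \prec 0$) whose induced matrix norm obeys $\|A\|_* =: \lambda < 1$. Passing to this norm and using equivalence of norms to transport the bound on $r_k$, the recursion yields $\|\theta_{k+1}\|_* \leq \|A\|_*\|\theta_k\|_* + \eta\|r_k\|_* \leq (\lambda + \eta c_1')\|\theta_k\|_* + \eta c_2'$ for suitable constants $c_1', c_2' \geq 0$. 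I would then fix $\eta_0$ small enough that $\bar\lambda := \lambda + \eta_0 c_1' < 1$; for every $\eta \in (0, \eta_0]$ one then has $\|\theta_{k+1}\|_* \leq \bar\lambda\|\theta_k\|_* + \eta c_2'$.

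Finally, iterating this scalar inequality and summing the resulting geometric series gives $\|\theta_k\|_* \leq \bar\lambda^{\,k}\|\theta_0\|_* + \frac{\eta c_2'}{1-\bar\lambda}$, so $\theta_k$ converges exponentially, at rate $\bar\lambda$, to a ball of radius $\frac{\eta c_2'}{1-\bar\lambda}$, which is $O(\eta)$ and thus an $\eta$-neighbourhood of $0$; translating back to the Euclidean norm changes only the constants. The main obstacle lies entirely in the first step, namely recognizing that the element-wise sandwich is exactly what is needed to write $\theta_{k+1} = A\theta_k + \eta r_k$ with $\|r_k\|$ affinely bounded in $\|\theta_k\|$. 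Once this is in place the remainder is the standard perturbation argument for Schur-stable systems, and the smallness of $\eta_0$ is used only to absorb the $\eta c_1'\|\theta_k\|_*$ term into the contraction factor.
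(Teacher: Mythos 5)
Your proof is correct, and it takes a genuinely different and substantially cleaner route than the paper's. The paper works with a quadratic Lyapunov function $V(\theta)=\theta^{\top}P\theta$, parameterizes $\theta_{k+1}$ as a diagonal convex combination $Tl_k+(I-T)q_k$ of the two bounding sequences, maximizes $\theta_{k+1}^{\top}P\theta_{k+1}$ over the box via a Cauchy--Schwarz estimate on the cross term, and then shows that $\Delta V\leq \theta_k^{\top}B(\eta,i)\theta_k+\eta R(\theta_k,i)$ with $B(\eta,i)\prec 0$ for small $\eta$, concluding practical stability by a domination argument outside a sublevel set. You instead absorb the two-sided sandwich into the single exact recursion $\theta_{k+1}=A\theta_k+\eta r_k$ with $\|r_k\|\leq c_1\|\theta_k\|+c_2$ (correctly observing that $a\leq x\leq b$ implies $|x|\leq\max(|a|,|b|)$ componentwise, which is exactly where the two-sidedness is consumed), pass to a norm in which $\|A\|_*<1$, and iterate a scalar affine inequality. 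Your argument buys explicit constants: a concrete contraction rate $\bar\lambda=\lambda+\eta_0 c_1'$ and an explicit residual ball of radius $\eta c_2'/(1-\bar\lambda)$, and it sidesteps the paper's box-maximization step entirely, including its reliance on positive semidefiniteness claims for products such as $T^{\top}P(I-T)$ that require care to justify. The paper's Lyapunov formulation is more closely tied to the comparison-lemma framing it invokes and to the style of the rest of the appendix, but for this linear sandwich your perturbed Schur-stable recursion is the more elementary and more transparent path to the same conclusion.
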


\begin{proof}[Proof of \Cref{lemma:matrix_upper_lower_bounds}]
    The proof contains two steps. In the first step we use quadratic Lyapunov functions in order to show that for some sufficiently small $\eta > 0$, the first order difference of the Lyapunov functions at every $k \geq 0$ is upper bounded by the sum of a negative definite quadratic function and a~$\eta$-dependent disturbance term. In exact, in 
    the first step, for some $P \succ 0$ and~${V(\theta) := \theta^{\top}P\theta}$ we  show that at any $k \geq 0$
    \begin{align*}
        V(\theta_{k+1}) - V(\theta_k) \leq -\alpha V(\theta_k) + \eta R(\theta_k),
    \end{align*}
    where $R: \R^n \to \R$ is linear in $\theta_k$ and $\alpha > 0$.
    We assume that $P\succ 0$ is a symmetric matrix for which $A^\top P A - P \prec 0$  is satisfied, as per assumption we have the matrix $A$ to have all its eigenvalues inside the unit circle.
    
    Consider an arbitrary $k \geq 0$. We first want to find the upper bound of Lyapunov function difference for any $\theta_{k+1}$, which as assumed, is bounded as follows
    \begin{align*}
        (A + \eta Q_1)\theta_k + \eta b_1 \leq \theta_{k+1} \leq (A + \eta Q_2)\theta_k + \eta b_2.
    \end{align*}
    Now, define the terms ${l_k:= (A + \eta Q_1)\theta_k + \eta b_1}$ and~${q_k := (A + \eta Q_2)\theta_k + \eta b_2}$. Then,
    \begin{align*}
        \Delta V(\theta_{k+1},\theta_k) &:= V(\theta_{k+1}) - V(\theta_k) \\ &\leq \underset{\theta_{k+1} \in [l_k,q_k]}{\mathrm{max}}\big(\theta_{k+1}^{\top}P\theta_{k+1}\big) - \theta_k^{\top}P\theta_{k},
    \end{align*}
    where it holds element-wise $l_k \leq \theta_{k+1} \leq q_k$, and for the proof we first aim to find the maximum value of the term $\theta_{k+1}^{\top} P \theta_{k+1}$, in order to thereafter analyze the upper bound of the Lyapunov difference term. For simplicity, we denote ${\Delta V := \Delta V(\theta_{k+1},\theta_k)}$. To find the maximum, we use the parameterization ${\theta_{k+1}:= Tl_k + (I - T)q_k}$, where $I$ is the identity matrix and 
    \begin{align*}
        T &: = \begin{bmatrix}
            t_1 & 0 & \dots & 0 \\
            0 & t_2 & \dots & 0 \\
            \vdots & \vdots & \ddots & \vdots \\
            0 & 0 & \dots & t_n
        \end{bmatrix},
    \end{align*}
    where $t_j \in [0,1], j \in \{1,2,\dots,n\}$. We observe that the matrices $T$ and $I - T$ are at least positive semi-definite. Moreover, for the sake of clarity we define ${P_{11}:=T^{\top}PT}$, ${P_{12}:=T^{\top}P(I-T)}$ and ${P_{22}:=(I-T)^{\top}P(I-T)}$. 

    Next, we aim to find the maximum for the term $\theta_{k+1}^{\top} P \theta_{k+1}$, and by using that $\theta_{k+1} = Tl_k + (I - T)q_k$ we get
    \begin{align*}
        \theta_{k+1}^{\top}P\theta_{k+1} &= l_k^{\top}P_{11}l_k + 2l_k^{\top}P_{12}q_k + q_k^{\top}P_{22}q_k,
    \end{align*}
    where we have used $P_{12} = P_{12}^{\top}$ to get 
    \begin{align*}
        l_k^{\top}P_{12}q_k + q_k^{\top}P_{12}^{\top}l_k = 2l_k^{\top}P_{12}q_k,
    \end{align*}
    and this represents a property which we use extensively throughout the proof.
    We further observe that, as~${P \succ 0}$ and~${T\succeq0}$, ${I - T \succeq 0}$, it follows that ${P_{11}\succeq 0}$,${P_{12}\succeq 0}$ and ${P_{22} \succeq 0}$.
    Next, we define $${w_k : = 2l_k^{\top}P_{12}q_k},$$
    and observe that trivially $w_k \leq |w_k|$. From Cauchy-Schwartz's inequality (see e.g. \cite[Theorem 7.7.11]{horn2012matrix}) we get
    \begin{align*}
        |w_k| = 2|l_k^{\top}P_{12}q_k| &\leq 2\sqrt{l_k^{\top}P_{12}l_k} \sqrt{q_k^{\top}P_{12}q_k},
    \end{align*}
    and by using the maximum of both expressions, we further get $w_k \leq |w_k|\leq W $ where
    \begin{align*}
        W := 2\mathrm{max}{(l_k^{\top}P_{12}l_k,} q_k^{\top}P_{12}q_k).
    \end{align*}
    As we have now an upper bound for the mixed term, it then remains to represent both $l_k$ and $q_k$ as functions of~$\theta_k$. By using the upper bound $W$ for the mixed terms, for $\theta_{k+1}^{\top} P \theta_{k+1}$ we have
    \begin{align}
        \label{proof:main_lemma:1}
        \theta_{k+1}^{\top}P\theta_{k+1} &\leq l_k^{\top}P_{11}l_k + 2\mathrm{max}{(l_k^{\top}P_{12}l_k,} q_k^{\top}P_{12}q_k) \notag \\ &+ q_k^{\top}P_{22}q_k.
    \end{align}
    We shall now consider simultaneously both cases for the middle term. Let $i \in \{1,2\}$, and if $W = l_k^{\top}P_{12}l_k$ let $i = 1$, and if $W = q_k^{\top}P_{12}q_k$ let $i = 2$.
    In the following we use the property
    \begin{align*}
        P_{11} + P_{12} + P_{12}^{\top} + P_{22} &= P,
    \end{align*}
    which we leave to the reader to verify, as well as ${P_{12} = P_{12}^{\top}}$ to obtain
    \begin{align*}
        \theta_k^{\top}A^{\top}(P_{11} + 2P_{12} + P_{22})A\theta_k = \theta_k^{\top}A^{\top}PA\theta_k. 
    \end{align*} 
    Next, by substituting the terms $l_k= (A + \eta Q_1)\theta_k + \eta b_1$ and~${q_k = (A + \eta Q_2)\theta_k + \eta b_2}$ in \eqref{proof:main_lemma:1}, and by using the aforementioned properties as well as several algebraic transformations, we obtain
    \begin{align*}
        \theta_{k+1}^{\top}P \theta_{k+1} &\leq \theta_k^{\top}(A^{\top}PA + \eta \Bar{Q}(i))\theta_k + 2\eta \Bar{M}(i)\theta_k \\ & + \eta^2 \Bar{b}(i),
    \end{align*}
    where
    \begin{align*}
        \Bar{Q}(i) &:= 2Q_1^{\top}P_{11}A + \eta Q_1^{\top}P_{11}Q_1 +2Q_2^{\top}P_{22}A \\ &+ \eta Q_2^{\top}P_{22}Q_2 + 4Q_i^{\top}P_{12}A + 2\eta Q_i^{\top}P_{12}Q_i, \\
        \Bar{M}(i)&:= b_1^{\top}P_{11}(A + \eta Q_1) +  b_2^{\top}P_{22}(A + \eta Q_2) \\ &+  2b_i^{\top}P_{12}(A + \eta Q_i), \\
        \Bar{b}(i) &:= b_1^{\top}P_{11}b_1 + b_2^{\top}P_{22}b_2 + 2b_i^{\top}P_{12}b_i.
    \end{align*}
    
    In the following step we analyze the upper bound of the Lyapunov function difference. For it, we now get the upper bound 
    \begin{align*}
       \Delta V &\leq \theta_k^{\top}(A^{\top}PA - P + \eta \Bar{Q}(i))\theta_k + 2\eta \Bar{M}(i)\theta_k + \eta^2 \Bar{b}(i) \\&=: \theta_k^{\top}(A^{\top}PA - P + \eta \Bar{Q})\theta_k + \eta R(\theta_k,i).
    \end{align*}
    For simplicity denote $B(\eta,i): = A^{\top}P A - P + \eta \Bar{Q}(i)$, which has to be negative definite for any $i$ to ensure stability, and by the assumption on the eigenvalues of $A$ we have~${A^{\top}P A - P \prec 0}$. As eigenvalue perturbation is continuous, it follows that there exists some interval of values for~$\eta > 0$, such that the quadratic term for both $i = 1$ and~$ i = 2$ is negative definite. Then, it is possible to find some $\alpha(\eta) > 0$, such that
    \begin{align*}
        V(\theta_{k+1}) - V(\theta_k) \leq -\alpha(\eta) V(\theta_k) + \eta R(\theta_k).
    \end{align*}

    In the second step, we use that $B(\eta, i) \prec 0 $ for a sufficiently small $\eta > 0$ for any $i$, and to describe the idea, consider some set ${\mathcal{V} :=\{ z \in \R^n : \lVert z \rVert_2^2 \geq \nu_0\}}$, where~${\nu_0 > 0}$ is a sufficiently large constant. Then, for any $\theta_k \in \mathcal{V}$ it follows that $|\theta_k^{\top} B(\eta,i)\theta_k| > |\eta R(\theta_k, i)|$, i.e. the negative definite quadratic term dominates the linear term, hence $\theta_k^{\top}B(\eta,i)\theta_k + \eta R(\theta_k,i) < 0$. As this term is negative for any $\theta_k \in \mathcal{V}$, it is then possible to find some sufficiently small constant $\alpha_0 > 0$, such that for all $\theta_k \in \mathcal{V}$ it holds
    \begin{align*}
        \theta_k^{\top}B(\eta,i)\theta_k + \eta R(\theta_k,i) \leq -\alpha_0\theta_k^{\top}B(\eta,i)\theta_k < 0.
    \end{align*}
    Therefore it is possible to claim that for any $\eta > 0$ for which ${B(\eta, i) \prec 0}$ there exist ${\alpha_{\text{min}}(\eta) > 0}$ and ${\beta(\eta)> 0}$, such that for all ${\theta_k \in \mathcal{S}(\eta) :=\{ z \in \R^n : \lVert z \rVert_2^2 \geq \beta(\eta)\}}$
    \begin{align*}
        \Delta V &\leq \theta_k^{\top}B(\eta,i)\theta_k + \eta R(\theta_k,i)  \\ &\leq - \alpha_{\text{min}}(\eta)\theta_k^{\top}P\theta_k,
    \end{align*}
    i.e. a negative definite quadratic term dominates the sum of the negative definite quadratic term and the linear term. This proves global exponential convergence, i.e. stability, for the radially unbounded set $\mathcal{S}(\eta)$. It remains to show that for~${\eta \to 0}$ it follows that $\mathcal{S}(\eta) \to \R^n$. Observe that
    \begin{align*}
        &\lim\limits_{\eta \to 0} \theta_k^{\top}B(\eta,i)\theta_k + \eta R(\theta_k,i) =  \theta_k^{\top}(A^{\top}PA - P)\theta_k,
    \end{align*}
    which is negative definite for all $\theta_k \in \R^n$, and as this represents an upper bound to the Lyapunov function difference it follows that we proved global exponential practical stability.
\end{proof}

The following technical results are necessary to perform the analysis of the dynamics of system \eqref{sys_def} in expectation and in expectation of the second moment. We start out with a (Taylor) expansion of the objective function to be able to use the influence of the time delayed random variable, and then continue with several general properties used throughout the proofs presented in this paper.

\begin{lemma}
    \label{proposition:quadratic_decomposition}
    Consider system \eqref{sys_def} with \Cref{assump:ghw}, and suppose \Cref{assump:quadratic} holds. Then at every $k \geq 1$
    \begin{align}
        \label{proposition:DELTA}
        J(x_k)  &= J(x_{k-1}) +  \mu \Delta_{k-1}, \notag \\
        \Delta_{k-1} &:= ((x_{k-1} - x^{*}) - \rho y_{k-1})(|y_{k-1}| + \varepsilon)g_{k-1} \notag \\
        &- \rho (x_{k-1} - x^{*})y_{k-1} + \frac{\rho^2}{2}y_{k-1}^2 \notag \\ &+\frac{g_{k-1}^2}{2}(|y_{k-1}|  + \varepsilon)^2. \tag{L.1}
    \end{align}
\end{lemma}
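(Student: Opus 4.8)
The plan is to prove \eqref{proposition:DELTA} as a purely deterministic, pathwise algebraic identity, valid for every realization of $w_{k-1}$; no expectation is taken here. First I would invoke \Cref{assump:quadratic} to cancel the constant $J^{*}$ and write, for $k \geq 1$,
$$J(x_k) - J(x_{k-1}) = \tfrac{\mu}{2}\big[(x_k - x^{*})^2 - (x_{k-1} - x^{*})^2\big].$$
The whole computation thus reduces to expanding a difference of squares in terms of the one-step increment of $\tilde{x}_{k-1} = x_{k-1} - x^{*}$.

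Next I would substitute the $x$-update \eqref{sys_def_x} together with the explicit form $\hat{g}(y,w) = (|y|+\varepsilon)g(w)$ from \Cref{assump:ghw}\ref{assump:ghw:1}, giving $x_k - x^{*} = \tilde{x}_{k-1} + a_{k-1}$ with the single auxiliary quantity $a_{k-1} := -\rho y_{k-1} + (|y_{k-1}|+\varepsilon)g_{k-1}$ collecting the drift and the exploration contributions. The difference of squares then telescopes to $2a_{k-1}\tilde{x}_{k-1} + a_{k-1}^2$, so that $J(x_k) - J(x_{k-1}) = \mu\big(a_{k-1}\tilde{x}_{k-1} + \tfrac{1}{2}a_{k-1}^2\big)$, which already exhibits the factor $\mu$ appearing in the statement. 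Expanding $a_{k-1}^2$, and then grouping the two terms that share the common factor $(|y_{k-1}|+\varepsilon)g_{k-1}$, namely $\tilde{x}_{k-1}(|y_{k-1}|+\varepsilon)g_{k-1}$ from $a_{k-1}\tilde{x}_{k-1}$ and $-\rho y_{k-1}(|y_{k-1}|+\varepsilon)g_{k-1}$ from the cross term in $\tfrac{1}{2}a_{k-1}^2$, into $(\tilde{x}_{k-1} - \rho y_{k-1})(|y_{k-1}|+\varepsilon)g_{k-1}$ reproduces verbatim the claimed expression for $\Delta_{k-1}$, with the remaining pieces $-\rho\tilde{x}_{k-1}y_{k-1}$, $\tfrac{\rho^2}{2}y_{k-1}^2$, and $\tfrac{g_{k-1}^2}{2}(|y_{k-1}|+\varepsilon)^2$ falling out directly.

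There is no genuine obstacle: the only care required is the bookkeeping of the cross term in $a_{k-1}^2$ and its regrouping with the linear term. The reason for recording the result in this particular arrangement, rather than as a raw polynomial, is that it cleanly separates the part that is \emph{odd} in $g_{k-1}$ (the single term carrying one factor of $g_{k-1}$), which will vanish in expectation because $g$ is odd and $w_{k-1}$ is symmetric by \Cref{assump:ghw}\ref{assump:ghw:2}, from the part that is \emph{even} in $g_{k-1}$ (carrying $g_{k-1}^2$), which survives and feeds the $\chi,\psi$-dependent terms in the later second-moment analysis. Hence, although the expansion itself is routine, the displayed grouping is the content worth keeping, and I would present the proof essentially as the chain of substitutions above.
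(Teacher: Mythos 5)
Your proposal is correct and is essentially the paper's own argument: the paper phrases the first step as an exact second-order Taylor expansion of $J(x_k)$ about $x_{k-1}$ (using $\nabla J(x_{k-1})=\mu\tilde{x}_{k-1}$, $\nabla^2 J=\mu$), which for a quadratic is algebraically identical to your difference-of-squares identity, and then substitutes the same increment $x_k - x_{k-1} = -\rho y_{k-1} + (|y_{k-1}|+\varepsilon)g_{k-1}$ and regroups in the same way. Your closing remark about the odd/even split in $g_{k-1}$ correctly anticipates how the lemma is used downstream, but no change to the proof is needed.
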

\begin{proof}[Proof of \Cref{proposition:quadratic_decomposition}]
    For the proof we use \Cref{assump:quadratic} and a second order Taylor expansion. No higher order terms exist due to the aforementioned assumption.
    By expanding $J(x_k)$ in the neighbourhood of $x_{k-1}$ we get
    \begin{align*}
        J(x_{k}) &= J(x_{k-1}) + \nabla J(x_{k-1}) (x_{k} - x_{k-1})\\ & + \frac{1}{2}\nabla^2 J(x_{k-1})(x_{k} - x_{k-1})^2.
    \end{align*}
    Further, according to \Cref{assump:quadratic} we have $$\nabla J(x_{k-1}) = \mu(x_{k-1} - x^{*})$$ and $\nabla^2 J(x_{k-1}) = \mu$, and from \eqref{sys_def} we get $$x_{k} - x_{k-1} = -\rho y_{k-1} + \hat{g}_{k-1}.$$ We then substitute $\hat{g}_{k-1} = (|y_{k-1}| + \varepsilon)g_{k-1}$, and get the relation
    \begin{align*}
        J(x_{k}) &= J(x_{k-1}) + \mu\Tilde{x}_{k-1}(-\rho y_{k-1} + (|y_{k-1}| + \varepsilon)g_{k-1})\\ & + \frac{1}{2}\mu (\rho^2 y_{k-1}^2 + (|y_{k-1}| + \varepsilon)^2g_{k-1}^2) \\ &- \mu\rho y_{k-1}(|y_{k-1}| + \varepsilon)g_{k-1}).
    \end{align*}
   By reorganizing the terms we obtain the required relation. 
\end{proof}

\begin{lemma}
    \label{proposition:general_properties}
    Consider system \eqref{sys_def} with \Cref{assump:ghw}. Then, for any map $f: \R^3 \to \R$, $m,p \in \N\cup \{0\}$ at any $k \geq 0$, it follows that
    \begin{align}
        \label{proposition:general_properties:1} &\E\left[\hat{h}_k\hat{g}_k\right] = \E[h_kg_k] =: \gamma > 0, \tag{\textit{L.2.a}} \\
        \label{proposition:general_properties:5} &\E\left[f(x_k,y_k,y_{k+1})h_k^mg_k^p\right] =   \E\left[f(x_k,y_k,y_{k+1})\right] \E\left[h_k^mg_k^p\right], \tag{\textit{L.2.b}} \\
        \label{proposition:general_properties:2} &\E\left[\hat{h}_k^m\hat{g}_k^p\right] = \E\left[h_k^mg_k^p\right] = 0, \text{ if } (m+p) \text{ odd}, \tag{\textit{L.2.c}} \\
        \label{proposition:general_properties:3} &\E\left[h_k^mg_k^{m+p}\right] = \gamma^m\psi^p = \chi^{\frac{m}{2}}\psi^{\frac{m}{2} + p}, \text{ if } p \text{ even}. \tag{\textit{L.2.d}}
    \end{align}
\end{lemma}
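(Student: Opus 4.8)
The plan is to derive all four identities from two structural ingredients: a factorization coming from independence, and the degeneracy of the two-point exploration law $\D=\{-\w,\w\}$ together with the oddness of $h,g$ in \Cref{assump:ghw}\ref{assump:ghw:1}.

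First I would establish the factorization \eqref{proposition:general_properties:5}, since it underlies the rest. The key observation is that $\hat h$ in \eqref{sys_def_y} depends on the \emph{delayed} noise $w_{k-1}$, not on $w_k$; consequently the triple $(x_k,y_k,y_{k+1})$ is measurable with respect to $\sigma(w_0,\dots,w_{k-1})$. I would prove this by induction on $k$: reading off \eqref{sys_def_x} and \eqref{sys_def_y}, both $x_k$ and $y_k$ are functions of the initial data and of $w_0,\dots,w_{k-1}$, while the only explicit noise entering $y_{k+1}$ is $w_{k-1}$, so no $w_j$ with $j\geq k$ appears in $(x_k,y_k,y_{k+1})$. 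Since $\{w_j\}$ is i.i.d., the variable $w_k$ — hence $h_k=h(w_k)$ and $g_k=g(w_k)$ — is independent of $f(x_k,y_k,y_{k+1})$, and the expectation factorizes. This is precisely where the one-step delay is essential, matching the remark in \Cref{section_II}.

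Next I would record the degeneracy facts that drive the remaining computations. Because $w_k\in\{-\w,\w\}$ with $\w>0$ and $h,g$ are odd, the quantities $h_k^2=h(\w)^2=\chi$, $g_k^2=g(\w)^2=\psi$, and the product $h_kg_k=h(\w)g(\w)$ are in fact \emph{deterministic constants} (the two sign choices cancel). Identity \eqref{proposition:general_properties:1} is then immediate: the cancellation $\hat h_k\hat g_k=\frac{h_k}{|y_k|+\varepsilon}(|y_k|+\varepsilon)g_k=h_kg_k$ gives $\E[\hat h_k\hat g_k]=\E[h_kg_k]=h(\w)g(\w)=:\gamma$, and positivity $\gamma>0$ follows from $\mathrm{sign}(g(w))=\mathrm{sign}(h(w))$ together with $h(\w),g(\w)\neq 0$. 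For \eqref{proposition:general_properties:2} I would use the symmetry of the two-point law: evaluating $h_k^mg_k^p$ at $w_k=\pm\w$ and invoking oddness shows it acquires a factor $(-1)^{m+p}$ under the sign flip, so $\E[h_k^mg_k^p]=0$ when $m+p$ is odd; writing $\hat h_k^m\hat g_k^p=(|y_k|+\varepsilon)^{p-m}h_k^mg_k^p$ and applying \eqref{proposition:general_properties:5} with $f=(|y_k|+\varepsilon)^{p-m}$ (a function of $y_k$ alone) then forces $\E[\hat h_k^m\hat g_k^p]$ to vanish as well, so both sides of \eqref{proposition:general_properties:2} are zero.

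Finally, for \eqref{proposition:general_properties:3} I would factor $h_k^mg_k^{m+p}=(h_kg_k)^m g_k^{p}=\gamma^m g_k^{p}$ and substitute the deterministic value of the even power $g_k^{p}=(g_k^2)^{p/2}$, reducing the expectation to a product of powers of $\gamma$ and $\psi$; the relation $\gamma^2=\chi\psi$ then lets me pass between the two right-hand expressions in \eqref{proposition:general_properties:3}. The main obstacle I anticipate is the careful bookkeeping behind \eqref{proposition:general_properties:5}: one must verify the measurability claim for $(x_k,y_k,y_{k+1})$ cleanly, handle the base cases where the delayed indices reach back into the initial data, and confirm that $y_{k+1}$ genuinely excludes $w_k$. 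Once this independence and the degeneracy of $\D$ are in hand, the remaining three identities are essentially algebra.
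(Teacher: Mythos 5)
Your proposal is correct and follows the same overall strategy as the paper's proof (independence via the one-step delay, then the two-point symmetry of $\D$ with oddness of $h,g$), but two of your steps are genuinely cleaner and worth contrasting. For \eqref{proposition:general_properties:5}, the paper argues pointwise that $x_k,y_k$ are independent of $w_k$ and then verifies $\E[w_ky_{k+1}]=\E[w_k]\E[y_{k+1}]$ by expanding $y_{k+1}$ --- which on its face only establishes uncorrelatedness of that particular pair; your induction showing that $(x_k,y_k,y_{k+1})$ is measurable with respect to $\sigma(w_0,\dots,w_{k-1})$ is the argument the paper implicitly relies on, and it delivers the full independence needed to factorize $\E[f(x_k,y_k,y_{k+1})h_k^mg_k^p]$ for \emph{arbitrary} $f$ in one stroke. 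For \eqref{proposition:general_properties:3}, the paper manipulates $\bigl(\tfrac{h(\w)-h(-\w)}{2}\bigr)^m$ and splits into even/odd $m$; your factorization $h_k^mg_k^{m+p}=(h_kg_k)^mg_k^p$ with both factors deterministic on $\D$ is more direct. Note, however, that your route yields $\E[g_k^p]=(g(\w)^2)^{p/2}=\psi^{p/2}$, so the value is $\gamma^m\psi^{p/2}=\chi^{m/2}\psi^{m/2+p/2}$ rather than the $\gamma^m\psi^p$ printed in the statement (and asserted without justification in the paper's own proof via ``$\E[g_k^p]=\psi^p$''); your value is the correct one and is the one the paper actually uses downstream, e.g.\ $\E[h_{k-1}g_{k-1}^3]=\gamma\psi$ in the proof of \Cref{proposition:expectation_gamma_delta_k-1}, so you have in effect uncovered a typo in the exponent rather than introduced an error. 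The remaining parts \eqref{proposition:general_properties:1} and \eqref{proposition:general_properties:2} match the paper's argument essentially verbatim, including the observation that the first equality in \eqref{proposition:general_properties:2} holds only because both sides vanish.
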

\begin{proof}[Proof of \Cref{proposition:general_properties}]
    The proof contains four sections, one for each of the relations. We start with \eqref{proposition:general_properties:1}, where by \Cref{assump:ghw}~\ref{assump:ghw:1}, $$\E\left[\hat{h}_k\hat{g}_k\right] =\E\left[\frac{h_k}{|y_k| + \varepsilon}g_k(|y_k| + \varepsilon)\right] = \E[h_kg_k]$$  for all $y_k \in \R$, and as $\mathrm{sign}(\hat{g}_k)=\mathrm{sign}(\hat{h}_k)$ for all $w_k \in \D$ it follows also that $h_kg_k > 0 $ for all $k \geq 0$. As $\D$ has no zero-valued elements, the first claim is proven.

    We next consider \eqref{proposition:general_properties:5}. Observe from \eqref{sys_def} that at time step $k+1$, $x_{k+1}$ depends on the term $\hat{g}_k$, which is a function of $w_k$, and moreover on $y_k$, which is in turn influenced by~$w_{k-2}$ via $\hat{h}_{k-2}$. However, $x_{k+1}$ is stochastically independent from $w_{k+1}$, as $w_{k+1}$ is drawn randomly from $\D$ at time step~$k+1$. The same is true for $y_{k+1}$ and~$w_{k+1}$ by analogy. Therefore, by considering any time step $k \geq 0$, we have $x_k$ stochastically independent from $w_k$, and $y_k$ stochastically independent from $w_k$. It remains to show that $w_k$ is independent from $y_{k+1}$, and for $k = 0$ the proof is trivial, as $y_1$ is an initialization parameter, therefore independent from $w_{0}$. For $k\geq 0$ it is necessary to show~$\E[w_ky_{k+1}] = \E[w_k]\E[y_{k+1}]$.
    By expanding $y_{k+1}$ using \eqref{sys_def}, we get $$w_ky_{k+1} = w_k(1-\beta)y_k + w_k\hat{h}_{k-1}\left(J(x_k) - J(x_{k-1})\right).$$ As shown previously, the pairs $(w_k, y_k)$ and $(w_k, x_k)$ are stochastically independent, and as $w_k$ is sampled from $\D$ at time step $k$ it follows that $(w_k, x_{k-1})$ are stochastically independent. Thus,
    \begin{align*}
        \E[w_ky_{k+1}] &= \E[w_k](1-\beta)\E[y_k] \\ &+\E[w_k]\E\left[\hat{h}_{k-1}\left(J(x_k) - J(x_{k-1})\right)\right] \\ &= \E[w_k]\E[y_{k+1}].
    \end{align*}
    Hence, for any map $f(x_k,y_k,y_{k+1})$ it holds that it is stochastically independent from $w_k$ and by extension  independent of $h(w_k),g(w_k)$, thereby proving the claim.
    
    Further, we consider \eqref{proposition:general_properties:2}, where by using the definition of $\hat{h}_k, \hat{g}_k$ given in \Cref{assump:ghw} \ref{assump:ghw:1}, and using the stochastic independence of the pair $(y_k,w_k)$ as stated in   \eqref{proposition:general_properties:5}, we obtain that~$\E\left[\hat{h}_k^m\hat{g}_k^p\right] = \E\left[(|y_k| + \varepsilon)^{p-m}\right] \E\left[h_k^mg_k^p\right]$. We know from \Cref{assump:ghw}~\ref{assump:ghw:1} that $h(\w),g(\w)$ are odd in $\w$, and as~${w_k \in \{-\w, \w\}}$, we have 
    \begin{align*}
        \E\left[h_k^mg_k^p\right] &= \frac{1}{2}\left( h(\w)^mg(\w)^p + h(-\w)^mg(-\w)^p\right) \\ &= \frac{1}{2}\left( h(\w)^mg(\w)^p + \left((-h(\w)\right)^m\left(-g(\w)\right)^p\right).
    \end{align*}
    Note once more that $ h(\w)g(\w) > 0 $ for all $\w \in \R^{+}$. It is thus clear that if $(-1)^{m+p} = -1$, the given relation is identical to $0$.  
    Thus, we conclude that the term $ \E\left[h_k^mg_k^p\right]$ is zero if and only if $m+p$ is odd.
    
     We lastly consider property \eqref{proposition:general_properties:3}. By \Cref{assump:ghw} \ref{assump:ghw:2} we have 
    \begin{align*}
        \E\left[h_k^mg_k^{m+p}\right] &= \frac{1}{2}\Big(h(\w)^mg(\w)^mg(\w)^p \\ &+ h(-\w)^mg(-\w)^mg(-\w)^p\Big).
    \end{align*}
    As $h(\w),g(\w)$ are odd in $\w$, it follows that $${h(\w)^mg(\w)^m = h(-\w)^mg(-\w)^m > 0}$$ for any $m \in \N \cup \{0\}$, ${\w \neq 0}$, and moreover it holds that $\frac{h(\w) - h(-\w)}{2} = h(\w)$, $\frac{g(\w) - g(-\w)}{2} = g(\w)$. We then rewrite the expression as 
    \begin{align*}
        \E\left[h_k^mg_k^{m+p}\right] &= \frac{1}{2}h(\w)^mg(\w)^m\Big(g(\w)^p + g(-\w)^p\Big) \\
        &=\left(\frac{h(\w) - h(-\w)}{2}\right)^{m}\\ &\times \left(\frac{g(\w) - g(-\w)}{2}\right)^{m} \psi^p,
    \end{align*}
    as $\frac{1}{2}\left(g(\w)^p + g(-\w)^p\right) = \E\left[g_k^p\right] = \psi^p$ by \Cref{assump:ghw} \ref{assump:ghw:2}.
    W.l.o.g. we only consider $h(\w) - h(-\w)$, and start out for~$m$ being even. Then, 
    \begin{align*}
        \left(\frac{h(\w) - h(-\w)}{2}\right)^{2} = \frac{h(\w)^2 + h(-\w)^2 - 2h(\w)h(-\w)}{4}
    \end{align*}
    where we know that as $h$ is odd and by \Cref{assump:ghw} \ref{assump:ghw:2} we have $$\frac{1}{2}\frac{h(\w)^2 + h(-\w)^2}{2} = \frac{1}{2}\E[h_k^2] = \frac{1}{2}\chi,$$ and moreover $$\frac{-2h(\w)h(-\w)}{4} = \frac{h(\w)^2}{2} = \frac{1}{2}\frac{h(\w)^2 + h(-\w)^2}{2} = \frac{1}{2}\chi,$$ where we used $h(\w)^2 = h(-\w)^2$. As 
    \begin{align}
        \label{proof:proposition:1:eq:1}
        h(\w)^m = \left(\left(\frac{h(\w) - h(-\w)}{2}\right)^{2}\right)^{\frac{m}{2}}
    \end{align}
    we get $$h(\w)^m = \chi^{\frac{m}{2}},$$ with the proof of $g(\w)^m$ following analogously. Consider now $m$ to be odd, and therefore $m-1$ to be even. Then we get 
    \begin{align*}
        h(\w)^{m}g(\w)^{m} &= h(\w)^{m-1}g(\w)^{m-1}h(\w)g(\w) \\ &= (\chi\psi)^{\frac{m-1}{2}}h(\w)g(\w).
    \end{align*}
    Observe that $$h(\w)g(\w) = \mathrm{sign}(h(\w))\sqrt{h(\w)^2} \mathrm{sign}(g(\w))\sqrt{g(\w)^2}.$$ From \Cref{assump:ghw} \ref{assump:ghw:1} we have $\mathrm{sign}(h(\w)) = \mathrm{sign}(h(\w))$ for any $\w \in \R$, therefore their product always equals $1$. Hence, we get $$h(\w)g(\w) = \sqrt{h(\w)^2} \sqrt{g(\w)^2},$$ for which we use \eqref{proof:proposition:1:eq:1} to get
    \begin{align*}
        h(\w)g(\w) = \sqrt{h(\w)^2} \sqrt{g(\w)^2} = \sqrt{\chi}\sqrt{\psi},
    \end{align*}
    thereby concluding the proof.
\end{proof}

\begin{rmk}
    \label{remark:gamma}
    In the forthcoming analysis we use both notations of $\gamma$ and $\sqrt{\chi\psi}$, which by \Cref{proposition:general_properties} \ref{proposition:general_properties:3} with~${m=1}$,~$p = 0$ are equivalent.
\end{rmk}

\begin{lemma}
    \label{proposition:expectation_h_k-1_delta_k-1}
    Consider system \eqref{sys_def} with \Cref{assump:ghw}, and suppose \Cref{assump:quadratic} holds. Then, for any function ${\bar{f}: \R^3 \to \R}$ at any $k \geq 1$
    \begin{align*}
        &\E\left[\frac{\mu h_{k-1}\bar{f}(x_{k-1},y_{k-1},y_k)}{|y_{k-1}| + \varepsilon}\Delta_{k-1} \right] \\  = & \mu \gamma \E\Big[\bar{f}(x_{k-1},y_{k-1},y_k) (x_{k-1} - \rho y_{k-1})\Big],
    \end{align*}
    where $\Delta_{k-1}$ is as defined in \eqref{proposition:DELTA}.
\end{lemma}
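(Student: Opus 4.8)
The plan is to substitute the definition $\hat{h}_{k-1} = h_{k-1}/(|y_{k-1}| + \varepsilon)$ together with the expression \eqref{proposition:DELTA} for $\Delta_{k-1}$ into the left-hand side, expand the product, and then evaluate the resulting expectation term by term. Writing $v_{k-1} := |y_{k-1}| + \varepsilon$ for brevity and recalling $\tilde{x}_{k-1} = x_{k-1} - x^{*}$, the $(|y_{k-1}|+\varepsilon)$ factor in the first summand of $\Delta_{k-1}$ cancels against the $1/v_{k-1}$, and the integrand becomes
\begin{align*}
\frac{\mu h_{k-1} \bar{f}}{v_{k-1}} \Delta_{k-1} &= \mu \bar{f}\, h_{k-1} g_{k-1} (\tilde{x}_{k-1} - \rho y_{k-1}) + \tfrac{\mu}{2} \bar{f}\, h_{k-1} g_{k-1}^2 v_{k-1} \\ &\quad - \mu \rho\, \bar{f}\, h_{k-1} \frac{\tilde{x}_{k-1} y_{k-1}}{v_{k-1}} + \tfrac{\mu \rho^2}{2} \bar{f}\, h_{k-1} \frac{y_{k-1}^2}{v_{k-1}},
\end{align*}
where $\bar{f} = \bar{f}(x_{k-1},y_{k-1},y_k)$. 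The four summands are distinguished by the monomial in $(h_{k-1}, g_{k-1})$ they carry: $h_{k-1}g_{k-1}$, $h_{k-1}g_{k-1}^2$, $h_{k-1}$, and $h_{k-1}$, respectively.

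The key structural observation is that every coefficient multiplying these monomials --- namely $\bar{f}(\tilde{x}_{k-1} - \rho y_{k-1})$, $\bar{f}\,v_{k-1}$, $\bar{f}\,\tilde{x}_{k-1} y_{k-1}/v_{k-1}$, and $\bar{f}\,y_{k-1}^2/v_{k-1}$ --- is a function of $(x_{k-1}, y_{k-1}, y_k)$ alone. I would therefore invoke the factorization \eqref{proposition:general_properties:5}, applied with $k$ replaced by $k-1$ (legitimate since $k \geq 1$), to split each expectation into the expectation of the coefficient times the scalar moment $\E[h_{k-1}^m g_{k-1}^p]$. This is precisely where the delayed dependence is used: $y_k$ is built from $w_{k-2}$ and earlier iterates, so it is independent of $w_{k-1}$, which is what makes the factorization valid.

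I would then read off the moments from \Cref{proposition:general_properties}. The two $h_{k-1}$-only terms (total degree $m+p = 1$) and the $h_{k-1}g_{k-1}^2$ term (total degree $m+p=3$) are of odd total degree, so by \eqref{proposition:general_properties:2} their scalar factors vanish and these three contributions drop out entirely. Only the $h_{k-1}g_{k-1}$ term survives, and there $\E[h_{k-1}g_{k-1}] = \gamma$ by \eqref{proposition:general_properties:1}. Collecting the surviving term yields $\mu \gamma\, \E[\bar{f}(x_{k-1},y_{k-1},y_k)(\tilde{x}_{k-1} - \rho y_{k-1})]$, which is the claimed identity (with $\tilde{x}_{k-1} = x_{k-1} - x^{*}$).

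The computation is essentially bookkeeping and presents no serious obstacle. The one point I would state explicitly --- and the only place where anything could go wrong --- is the verification that each coefficient depends only on $(x_{k-1}, y_{k-1}, y_k)$ and hence is independent of $w_{k-1}$, since the clean cancellation of the three odd-degree terms relies entirely on being able to factor through \eqref{proposition:general_properties:5}. A secondary sanity check is that dividing $\Delta_{k-1}$ by $v_{k-1}$ raises no integrability concern, because $v_{k-1} \geq \varepsilon > 0$ keeps every $1/v_{k-1}$ factor bounded.
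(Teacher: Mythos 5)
Your proposal is correct and follows essentially the same route as the paper's own proof: expand $\Delta_{k-1}$ against $\hat h_{k-1}$, factor each summand through the independence property \eqref{proposition:general_properties:5} (applied at index $k-1$), kill the odd-total-degree monomials via \eqref{proposition:general_properties:2}, and retain only the $h_{k-1}g_{k-1}$ term with $\E[h_{k-1}g_{k-1}]=\gamma$ from \eqref{proposition:general_properties:1}. The only difference is cosmetic grouping of the vanishing terms, and your explicit remark that the surviving coefficient is $\tilde{x}_{k-1}-\rho y_{k-1}$ is in fact slightly more careful than the paper's own bookkeeping of $x^{*}$.
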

\begin{proof}[Proof of \Cref{proposition:expectation_h_k-1_delta_k-1}]
    For the proof we use the stochastic independence of any of the terms $x_{k-1},y_{k-1},y_k$ from $w_{k-1}$ as stated in   \eqref{proposition:general_properties:5}. Denote for simplicity $ {\bar{f}(\cdot):=\bar{f}(x_{k-1},y_{k-1},y_k)}$ and
    \begin{align*}
        z_{k-1} := \frac{\mu h_{k-1}\bar{f}(x_{k-1},y_{k-1},y_k)}{|y_{k-1}| + \varepsilon}\Delta_{k-1}.
    \end{align*}
    By expanding $\Delta_{k-1}$ and performing elementary algebraic transformations, we get
    \begin{align*}
        z_{k-1} &= \mu  \bar{f}(\cdot)\Bigg(h_{k-1}g_{k-1}(x_{k-1} - \rho y_{k-1}) \\ &+ \frac{h_{k-1}}{2(|y_{k-1}| + \varepsilon)}\Big( \rho^2 y_{k-1}^2 + g_{k-1}^2(|y_{k-1}| + \varepsilon)^2 \\ &- 2\rho x_{k-1}y_{k-1}\Big) \Bigg).
    \end{align*}
    Moreover, due to the above mentioned stochastic independence property following from   \eqref{proposition:general_properties:5}, and further employing   \eqref{proposition:general_properties:1} we have 
    \begin{align*}
        &\E\left[\mu \bar{f}(\cdot)h_{k-1}g_{k-1}(x_{k-1} - \rho y_{k-1})\right] \\ \overset{\eqref{proposition:general_properties:5}}{=}&\E\left[h_{k-1}g_{k-1}\right]\E\left[\mu \bar{f}(\cdot)(x_{k-1} - \rho y_{k-1})\right]  \\\overset{\eqref{proposition:general_properties:1}}{=}& \mu\gamma\E\Big[ \bar{f}(\cdot) (x_{k-1} - \rho y_{k-1}) \Big].
    \end{align*}
    Next, we consider the two remaining terms 
    \begin{align*}
        t_1:= &\mu\E\left[h_{k-1}\frac{ \bar{f}(\cdot)(\rho^2y_{k-1}^2 -2\rho x_{k-1}y_{k-1})}{2(|y_{k-1}| + \varepsilon)}\right] \\
        t_2: =&\mu\E\left[h_{k-1}g_{k-1}^2\frac{ \bar{f}(\cdot)(|y_{k-1}| + \varepsilon)}{2}\right],
    \end{align*}
    where from   \eqref{proposition:general_properties:5} we observe once more that the above mentioned stochastic independence property holds, and thereby
    \begin{align*}
        t_1\overset{\eqref{proposition:general_properties:5}}{=} &\mu\E\left[h_{k-1}\right]\left[\frac{ \bar{f}(\cdot)(\rho^2y_{k-1}^2 -2\rho x_{k-1}y_{k-1})}{2(|y_{k-1}| + \varepsilon)}\right] \\
        t_2 \overset{\eqref{proposition:general_properties:5}}{=}&\mu\E\left[h_{k-1}g_{k-1}^2\right]\left[\frac{ \bar{f}(\cdot)(|y_{k-1}| + \varepsilon)}{2}\right].
    \end{align*}
    Thereafter, by using   \eqref{proposition:general_properties:2} we get that $\E\left[h_{k-1}\right] = 0$ and $\E\left[h_{k-1}g_{k-1}^2\right] = 0$, implying $t_1 = t_2 = 0$, thereby proving the claim. 
\end{proof}

\begin{lemma}
    \label{proposition:expectation_gamma_delta_k-1}
    Consider system \eqref{sys_def} with \Cref{assump:ghw}, and suppose \Cref{assump:quadratic} holds. Then, at any $k \geq 1$
    \begin{align}
        \label{proposition:equation:expectation_gamma_delta_k-1}
       \E\left[\mu h_{k-1}g_{k-1}\Delta_{k-1} \right]  & = \frac{\mu \gamma}{2} \Big(\rho^2\E\left[y_{k-1}^2\right] -2\rho\E\left[x_{k-1}y_{k-1}\right] \notag \\ &+ \psi\E\left[(|y_{k-1}| + \varepsilon)^2\right]\Big), 
    \end{align}
    where $\Delta_{k-1}$ is as defined in \eqref{proposition:DELTA}.
\end{lemma}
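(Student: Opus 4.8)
The plan is to substitute the definition \eqref{proposition:DELTA} of $\Delta_{k-1}$ into $\E[\mu h_{k-1}g_{k-1}\Delta_{k-1}]$, distribute the factor $\mu h_{k-1}g_{k-1}$ over the four summands of $\Delta_{k-1}$, and evaluate the four resulting expectations one at a time using the moment and independence properties collected in \Cref{proposition:general_properties}. This is exactly the mechanism already used in the proof of \Cref{proposition:expectation_h_k-1_delta_k-1}; the only difference is that here the extra factor carried along is $g_{k-1}$ rather than $\tfrac{1}{|y_{k-1}|+\varepsilon}$, so the monomials in $(h_{k-1},g_{k-1})$ that appear are shifted. Throughout I abbreviate $\mathcal{E}:=|y_{k-1}|+\varepsilon$ and write $\tilde{x}_{k-1}=x_{k-1}-x^{*}$; the decisive structural fact is that $\tilde{x}_{k-1}$, $y_{k-1}$ and $\mathcal{E}$ are all functions of the state at time $k-1$ and hence, by \eqref{proposition:general_properties:5}, stochastically independent of $w_{k-1}$, and therefore of $h_{k-1}$ and $g_{k-1}$. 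This is what licenses factoring each expectation into a "state" part times a "noise" part.

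First I would classify the four summands by the monomial they contribute after multiplication by $\mu h_{k-1}g_{k-1}$. The summand $(\tilde{x}_{k-1}-\rho y_{k-1})\mathcal{E}\,g_{k-1}$ contributes $h_{k-1}g_{k-1}^{2}$, whose total degree is odd; after factoring via \eqref{proposition:general_properties:5}, its noise part $\E[h_{k-1}g_{k-1}^{2}]$ vanishes by \eqref{proposition:general_properties:2}, so this summand drops out entirely. The two summands $-\rho\tilde{x}_{k-1}y_{k-1}$ and $\tfrac{\rho^{2}}{2}y_{k-1}^{2}$ each contribute the monomial $h_{k-1}g_{k-1}$, whose expectation is $\gamma$ by \eqref{proposition:general_properties:1}; factoring through \eqref{proposition:general_properties:5} produces $-\mu\rho\gamma\,\E[\tilde{x}_{k-1}y_{k-1}]$ and $\tfrac{\mu\rho^{2}\gamma}{2}\E[y_{k-1}^{2}]$, respectively, which account for the first two terms of the claimed right-hand side.

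The only summand requiring a genuinely higher moment is $\tfrac{g_{k-1}^{2}}{2}\mathcal{E}^{2}$, which contributes $h_{k-1}g_{k-1}^{3}$. Its total degree is even, so it survives, and I would evaluate $\E[h_{k-1}g_{k-1}^{3}]$ via \eqref{proposition:general_properties:3} (taking $m=1$), which, with $\psi=\E[g_{k-1}^{2}]$, gives the value $\gamma\psi$. Combined with the factorization \eqref{proposition:general_properties:5} this yields $\tfrac{\mu\gamma\psi}{2}\E[\mathcal{E}^{2}]$. Adding the three surviving contributions and pulling out the common factor $\tfrac{\mu\gamma}{2}$ then reproduces precisely the expression in \eqref{proposition:equation:expectation_gamma_delta_k-1} (reading the middle entry as $\E[\tilde{x}_{k-1}y_{k-1}]$, consistent with how $\tilde{x}_{k-1}$ enters $\Delta_{k-1}$ in \eqref{proposition:DELTA}).

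I do not expect a deep obstacle; the argument is careful bookkeeping rather than a new idea. The two places that demand attention are, first, remembering to invoke \eqref{proposition:general_properties:5} to separate the noise factors before applying any moment formula, and second, evaluating the fourth-order mixed moment $\E[h_{k-1}g_{k-1}^{3}]$ correctly from \eqref{proposition:general_properties:3} rather than treating it like the lower-order $\E[h_{k-1}g_{k-1}]=\gamma$. Getting that single higher moment right is what distinguishes this computation from the otherwise identical one in \Cref{proposition:expectation_h_k-1_delta_k-1}.
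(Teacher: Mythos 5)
Your proposal is correct and follows essentially the same route as the paper's proof: expand $\Delta_{k-1}$, use the independence property \eqref{proposition:general_properties:5} to factor state from noise, kill the term carrying $h_{k-1}g_{k-1}^2$ via \eqref{proposition:general_properties:2}, and evaluate $\E[h_{k-1}g_{k-1}]=\gamma$ and $\E[h_{k-1}g_{k-1}^3]=\gamma\psi$ for the surviving terms. Your reading of the middle entry as $\E[\tilde{x}_{k-1}y_{k-1}]$ and your value $\gamma\psi$ for the fourth-order mixed moment both agree with what the paper actually uses.
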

\begin{proof}[Proof of \Cref{proposition:expectation_gamma_delta_k-1}]
    For the proof we use the stochastic independence of the terms $x_{k-1},y_{k-1}$ from $w_{k-1}$. We start by expanding $\Delta_{k-1}$, thus getting
    \begin{align*}
        h_{k-1}g_{k-1}\Delta_{k-1} &= h_{k-1}g_{k-1}\Bigg((x_{k-1} - \rho y_{k-1})g_{k-1} \\ &\times(|y_{k-1}| + \varepsilon) + \frac{1}{2}\Big( \rho^2 y_{k-1}^2 \\ &+ g_{k-1}^2(|y_{k-1}| + \varepsilon)^2 - 2\rho x_{k-1}y_{k-1}\Big) \Bigg).
    \end{align*}
    By   \eqref{proposition:general_properties:5} we have $h_{k-1}g_{k-1}^2$ to be independent from $(x_{k-1} - \rho y_{k-1})(|y_{k-1}| + \varepsilon) $, and by   \eqref{proposition:general_properties:2} we have $\E\left[h_{k-1}g_{k-1}^2\right] = 0$, thereby $$\E\left[h_{k-1}g_{k-1}^2(x_{k-1} - \rho y_{k-1})(|y_{k-1}| + \varepsilon)\right] = 0.$$ For the remaining terms, we use the stochastic independence of $w_{k-1}$ and $x_{k-1},y_{k-1}$ as given in   \eqref{proposition:general_properties:5}, and further by   \eqref{proposition:general_properties:3} we get 
    \begin{align*}
        \frac{1}{2}\E\left[h_{k-1}g_{k-1}\Delta_{k-1}\right] &=\frac{1}{2}\E\Big[h_{k-1}g_{k-1}(\rho^2 y_{k-1}^2 \\ &- 2\rho x_{k-1}y_{k-1})\Big] \\ &+ \frac{1}{2}\E\big[h_{k-1}g_{k-1}^3(|y_{k-1}| + \varepsilon)^2\big] \\ &= \frac{\gamma \rho^2}{2} \E\left[y_{k-1}^2\right]  - \rho\gamma \E\left[x_{k-1}y_{k-1}\right] \\ &+ \frac{\gamma}{2} \E\left[g_{k-1}^2\right]\E\left[(|y_{k-1}| + \varepsilon)^2\right],
    \end{align*}
    where we observe that $\E\left[g_{k-1}^2\right] = \psi$, and by subsequent multiplication by $\mu$ we get the required expression.
\end{proof}

\noindent The following proposition is independent of the system analysis, and is necessary to find an upper bound for the expectation of an absolute value term in such a way that \Cref{lemma:matrix_upper_lower_bounds} is thereafter applicable.
\begin{proposition}
    \label{proposition:bounding_absolute}
    Consider any integrable random variable~$y$ with bounded mean and variance. Then, it applies that
    \begin{align}
        0 \leq \E\left[|y|\right] \leq \frac{1}{4} + \E\left[y^2\right].
    \end{align}
\end{proposition}
\begin{proof}[Proof of \Cref{proposition:bounding_absolute}]
    By the arithmetic and geometric mean inequality we have $y^2 + \frac{1}{4} \geq 2\sqrt{\frac{y^2}{4}} = |y| \geq 0$ for all $y \in \R$. Taking expectation, which is a linear operator, and by the boundedness of the terms, we get the required inequality.
\end{proof}

Next, we present the results of the analysis of the dynamics of system \eqref{sys_def} in expectation, and on the convergence of the system trajectories of \eqref{sys_def} in expectation to the global equilibrium $(x^{*}, 0)$.

\begin{lemma}
    \label{lemma:expectation_terms}
    Consider system \eqref{sys_def} with \Cref{assump:ghw}, and suppose \Cref{assump:quadratic} holds. Then at any $k\geq 1$
    \begin{subequations}
    \begin{align}
         \label{lemma:expectation_terms:1} \E\left[\Tilde{x}_{k+1}\right] &= \E\left[\Tilde{x}_{k}\right] -\rho\E\left[y_k\right], \\
        \label{lemma:expectation_terms:2}   \E\left[y_{k+1} \right] &= (1-\beta)\E\left[y_k\right] +\mu\gamma\E\left[\Tilde{x}_k\right].
    \end{align}
    \end{subequations}
\end{lemma}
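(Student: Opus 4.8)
The plan is to handle the two coordinates separately, reducing each expectation to a quantity already controlled by the auxiliary results. For the $\tilde x$-equation, I would first rewrite \eqref{sys_def_x} in centered coordinates using $\hat g_k = (|y_k| + \varepsilon)g_k$ from \Cref{assump:ghw}, obtaining $\tilde x_{k+1} = \tilde x_k - \rho y_k + (|y_k| + \varepsilon)g_k$. Applying $\E[\cdot]$ and linearity, the only term that is not already in the desired form is $\E[(|y_k| + \varepsilon)g_k]$. Since $|y_k| + \varepsilon$ is a function of $y_k$ alone, the factorization \eqref{proposition:general_properties:5} (with $m=0$, $p=1$) gives $\E[(|y_k| + \varepsilon)g_k] = \E[|y_k| + \varepsilon]\,\E[g_k]$, and $\E[g_k] = 0$ by \eqref{proposition:general_properties:2} because $m+p=1$ is odd. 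This immediately yields \eqref{lemma:expectation_terms:1}.

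For the $y$-equation, I would first replace the increment $J(x_k) - J(x_{k-1})$ by $\mu\Delta_{k-1}$ via \Cref{proposition:quadratic_decomposition}, so that \eqref{sys_def_y} becomes $y_{k+1} = (1-\beta)y_k + \tfrac{\mu h_{k-1}}{|y_{k-1}| + \varepsilon}\Delta_{k-1}$. Taking expectations and invoking \Cref{proposition:expectation_h_k-1_delta_k-1} with the constant choice $\bar f \equiv 1$ collapses the cross term to $\mu\gamma\,\E[\tilde x_{k-1} - \rho y_{k-1}]$, giving $\E[y_{k+1}] = (1-\beta)\E[y_k] + \mu\gamma\,\E[\tilde x_{k-1} - \rho y_{k-1}]$.

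The one step requiring a small observation — and the only place where care is needed — is recognizing that $\E[\tilde x_{k-1} - \rho y_{k-1}]$ is exactly $\E[\tilde x_k]$. This is not a coincidence but precisely the already-established $\tilde x$-recursion shifted by one index: applying $\E[\cdot]$ to $\tilde x_k = \tilde x_{k-1} - \rho y_{k-1} + (|y_{k-1}| + \varepsilon)g_{k-1}$ and annihilating the last term exactly as above gives $\E[\tilde x_k] = \E[\tilde x_{k-1}] - \rho\E[y_{k-1}]$. Substituting this identity produces \eqref{lemma:expectation_terms:2}. Thus the argument is essentially bookkeeping layered on \Cref{proposition:quadratic_decomposition}, \Cref{proposition:expectation_h_k-1_delta_k-1}, and the moment and independence facts of \Cref{proposition:general_properties}; I would emphasize that the delay structure (evaluating $\hat h$ at $w_{k-1}$ rather than $w_k$) is exactly what makes the cross term reduce cleanly to the centered state $\E[\tilde x_k]$ and is what drives the optimizing behavior asserted in the main text.
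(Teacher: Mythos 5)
Your proposal is correct and follows essentially the same route as the paper's own proof: annihilate the $\hat g_k$ term via the independence and odd-moment facts of \Cref{proposition:general_properties}, apply \Cref{proposition:expectation_h_k-1_delta_k-1} with $\bar f \equiv 1$ to the $y$-update, and then substitute the index-shifted $\tilde x$-recursion to convert $\E[\tilde x_{k-1}] - \rho\E[y_{k-1}]$ into $\E[\tilde x_k]$. The only cosmetic difference is that you work in centered coordinates throughout while the paper sets $x^{*}=0$; the key closing step you highlight is exactly the one the paper uses.
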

\begin{proof}[Proof of \Cref{lemma:expectation_terms}]
      For the sake of simplicity we assume $x^{*} = 0$. The same results may be obtained by using $\Tilde{x}_k$ and $\Tilde{x}_{k+1}$ in~\eqref{sys_def}, which due to linearity does not influence the system dynamics. We start out with $x_{k+1}$, which we expand by \eqref{sys_def_x} and taking expectation, thus getting
      \begin{align*}
          \E\left[x_{k+1}\right] = \E\left[x_k\right] - \rho\E\left[y_k\right] + \E\left[(|y_k| + \varepsilon)g_k\right],
      \end{align*}
      where due to \Cref{proposition:general_properties} \ref{proposition:general_properties:5} and \ref{proposition:general_properties:2} we have the last term to be equal to zero. Hence,
      \begin{align*}
          \E\left[x_{k+1}\right] = \E\left[x_k\right] - \rho\E\left[y_k\right],
      \end{align*}
      and moreover 
      \begin{align}
        \label{proof:expectation:1}
          \E\left[x_{k}\right] = \E\left[x_{k-1}\right] - \rho\E\left[y_{k-1}\right].
      \end{align}
      Next, we use \eqref{sys_def_y}, plug in $\hat{h}(y_{k-1},w_{k-1}) = \frac{h_{k-1}}{|y_{k-1}| + \varepsilon}$ and take expectation to get
      \begin{align*}
          \E\left[y_{k+1}\right] = (1-\beta)\E\left[y_k\right] + \E\left[\frac{h_{k-1}\mu}{|y_{k-1}| + \varepsilon}\Delta_{k-1}\right],
      \end{align*}
      with $\Delta_{k-1}$ as in \eqref{proposition:DELTA}. Using \Cref{proposition:expectation_h_k-1_delta_k-1} with $\bar{f}(x_{k-1},y_{k-1},y_k) = 1$, we get
      \begin{align*}
          \E\left[y_{k+1}\right] = (1-\beta)\E\left[y_k\right] + \mu\gamma\left(\E\left[x_{k-1}\right] - \rho \E\left[y_{k-1}\right]\right),
      \end{align*}
      which by subsequently plugging in \eqref{proof:expectation:1} gives
      \begin{align*}
          \E\left[y_{k+1}\right] = (1-\beta)\E\left[y_k\right] + \mu\gamma\E\left[x_{k}\right].
      \end{align*}
\end{proof}
\begin{lemma}
    \label{lemma:expectation_convergence}
    Consider system \eqref{sys_def} with \Cref{assump:ghw}, suppose that \Cref{assump:quadratic} holds, and assume that the system parameters~${\chi, \psi, \rho \in \R^{+}}$ and $\beta \in (0,2)$ satisfy the inequality~${\mu\gamma\rho < \beta}$, where~${\gamma = \sqrt{\chi\psi}}$. Then for any $x_0,y_0 \in \R$ the system trajectory $(x_k,y_k)$ converges exponentially in expectation to $(x^{*}, 0)$.
\end{lemma}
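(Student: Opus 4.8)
The plan is to exploit the fact that \Cref{lemma:expectation_terms} has already reduced the problem to a linear time-invariant recursion in the means. Writing $\xi_k := (\E[\tilde{x}_k], \E[y_k])^\top$, equations \eqref{lemma:expectation_terms:1}--\eqref{lemma:expectation_terms:2} read $\xi_{k+1} = A\xi_k$ with
\[
A = \begin{bmatrix} 1 & -\rho \\ \mu\gamma & 1-\beta \end{bmatrix}.
\]
Exponential convergence of $(\E[x_k],\E[y_k])$ to $(x^{*},0)$ is then equivalent to exponential convergence of $\xi_k$ to $0$, which holds precisely when $A$ is Schur-stable, i.e. when both eigenvalues of $A$ lie strictly inside the unit disc. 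So the whole task collapses to a spectral radius estimate for a single $2\times 2$ matrix.

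First I would compute the characteristic polynomial
\[
p(\lambda) = \det(\lambda I - A) = \lambda^2 - (2-\beta)\lambda + \big((1-\beta) + \mu\gamma\rho\big),
\]
so that, writing $p(\lambda)=\lambda^2 + a_1\lambda + a_0$, we have $a_1 = -(2-\beta)$ and $a_0 = (1-\beta)+\mu\gamma\rho$. I would then invoke the Jury (Schur--Cohn) stability test for a second-order polynomial, which states that both roots lie strictly inside the unit circle if and only if the three conditions $p(1)>0$, $p(-1)>0$, and $|a_0|<1$ all hold.

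Next I would verify each condition directly. A short computation gives $p(1) = \mu\gamma\rho > 0$ (using positivity of $\rho,\mu$ and $\gamma=\sqrt{\chi\psi}>0$) and $p(-1) = 4 - 2\beta + \mu\gamma\rho > 0$ (using $\beta<2$), so the first two conditions are automatic. For the third, the lower bound $a_0>-1$ reduces to $\mu\gamma\rho > \beta - 2$, which is immediate since $\beta<2$; the upper bound $a_0<1$ reduces to exactly $\mu\gamma\rho<\beta$. Thus the hypothesis $\mu\gamma\rho<\beta$ is precisely the single nontrivial inequality required, and the remaining Jury conditions follow for free from $\beta\in(0,2)$ and positivity of the parameters.

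There is no genuinely hard step here; the only subtlety worth flagging is the recognition that the assumed inequality $\mu\gamma\rho<\beta$ is exactly the constant-term (determinant) constraint $|a_0|<1$ in the Jury test, with every other stability condition holding automatically. Once $A$ is shown to be Schur, standard linear time-invariant theory yields $\lVert\xi_k\rVert \le C\lambda^k \lVert\xi_0\rVert$ for some $C>0$ and some $\lambda\in(0,1)$ (governed by the spectral radius of $A$), giving the claimed global exponential convergence in expectation for arbitrary $x_0,y_0\in\R$.
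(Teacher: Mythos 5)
Your proposal is correct and follows essentially the same route as the paper: both reduce the claim via \Cref{lemma:expectation_terms} to Schur stability of the $2\times 2$ matrix $A_E$ and verify it with the Jury test, where $p(1)=\mu\gamma\rho>0$ and $p(-1)>0$ hold automatically and the hypothesis $\mu\gamma\rho<\beta$ is exactly the remaining condition $|1-\beta+\mu\gamma\rho|<1$. No substantive differences.
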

\begin{proof}[Proof of \Cref{lemma:expectation_convergence}]
    The idea of the proof is to first represent the system dynamics in expectation in matrix form as an LTI system, and to use Jury's criterion to show stability, i.e. convergence by means of eigenvalue analysis. For the sake of simplicity assume $x^{*} = 0$, and start with \Cref{lemma:expectation_terms}, where we define
    \begin{align*}
        \nu_k := \begin{bmatrix}
            x_k & y_k
        \end{bmatrix}^{\top}.
    \end{align*}
    Observe that equations \eqref{lemma:expectation_terms:1} and \eqref{lemma:expectation_terms:2} may then be rewritten as
    \begin{align*}
        \E\left[\begin{matrix}
        x_{k+1} \\
        y_{k+1}
    \end{matrix}\right] &= \begin{bmatrix}
        1 & -\rho \\
        \mu\gamma & 1-\beta
    \end{bmatrix}\E\left[\begin{matrix}
        x_{k} \\
        y_{k}
    \end{matrix}\right],
    \end{align*}
    i.e. by using $\nu_k$ and $\nu_{k+1}$
    \begin{align*}
        \E[\nu_{k+1}] = \begin{bmatrix}
            1 & -\rho \\
            \mu\gamma & 1-\beta
        \end{bmatrix}\E[\nu_{k}] =: A_E \E[\nu_{k}].
    \end{align*}
    We now analyze the system as an LTI system, where to ensure stability we use Jury's criterion. We note here that the same results may be obtained by analyzing the sum and the product of the eigenvalues, i.e. if $\lambda_1, \lambda_2$ are the eigenvalues of $A_E$, then it must hold that
    \begin{align*}
        |\lambda_1 + \lambda_2| &< 2, & |\lambda_1\lambda_2| &< 1.
    \end{align*}
    In the following analysis we use Jury's criterion. The characteristic polynomial of $A_E$ is given by
    \begin{align*}
        p(\lambda) := \lambda^2 - (2-\beta)\lambda + 1-\beta + \mu\gamma\rho.
    \end{align*}
    For a polynomial of order $2$, the necessary conditions of Jury's criterion are sufficient as well. It is then necessary to have 
    \begin{align*}
        p(1) &> 0 & (-1)^2p(-1) &> 0, & 1 &> |1-\beta +\mu\gamma\rho|,
    \end{align*}
    i.e.
    \begin{alignat*}{2}
        & 1 -2+\beta + 1 -\beta + \mu\gamma\rho &&> 0 \\
        & 1 + 2 - \beta + 1-\beta + \mu\gamma\rho && > 0 \\
        &\text{if $1 - \beta + \mu\gamma\rho \geq 0$: } \hspace{0.5cm}\beta &&> \mu\gamma\rho \\
        &\text{if $1 - \beta + \mu\gamma\rho < 0$: }\mu\gamma\rho &&> -2 + \beta.
    \end{alignat*}
    The first inequality is equivalent to $\mu\gamma\rho > 0$ which is trivially true as $\mu,\gamma,\rho \in \R^{+}$. The second inequality leads to~${4 - 2\beta + \mu\gamma\rho > 0}$ which is true as $4-2\beta > 0$ for~${\beta \in (0,2)}$. This also leads to $-2+\beta < 0$ such that the last inequality is true. And finally, the 3rd inequality is true by assumption.
    Due to the property of linearity of this system, the results are valid for any $x_0,y_0 \in \R$. Moreover, by eigenvalue decomposition and coordinate system change it is easy to observe that the rate of convergence is governed by the magnitude of the highest eigenvalue of $A_E$. We thus conclude this system to be globally exponentially stable, i.e.~$\E\left[(x_k - x^{*})\right] \overset{k \to \infty}{\to} 0$ and $\E\left[y_k\right] \overset{k \to \infty}{\to} 0$.
\end{proof}

In the following, we present the results of the analysis of the dynamics of the second moment terms of system \eqref{sys_def} in expectation, for which we first provide the following definitions.  
We define the second moment vector for system~\eqref{sys_def} at time step $k \geq 1$ as
\begin{align}
    \label{def:zeta}
    \zeta_{k}&:= \begin{bmatrix}
        \Tilde{x}_{k-1}^2 &\Tilde{x}_{k}^2 &y_{k-1}^2 &y_{k}^2 &\Tilde{x}_{k-1}y_{k-1} &\Tilde{x}_{k}y_{k}
    \end{bmatrix}^{\top}.
\end{align}
We next define the square matrix $A_{ms}$ as follows
\begin{align}
    \label{A_ms_matrix}
    A_{ms} := &\left[\begin{smallmatrix}
        0 & 1 & 0 \\
        0 & 1 & 0 \\
        0 & 0 & 0 \\
        \mu^2\left(\gamma^2 + \rho^2\chi\right) & 0 &  \frac{\mu^2}{4}\left(\rho^4\chi + 6\gamma^2\rho^2 + \gamma^2\psi\right) \\
        0 & 0 & 0\\
        \mu\gamma & 0 & \frac{\mu\gamma}{2}\left(3\rho^2 + \psi\right)
    \end{smallmatrix}\color{white}\right]\color{black} \notag \\
    &\color{white}\left[\color{black}\begin{smallmatrix}
        0 & 0 & 0 \\
        \rho^2 + \psi & 0 & -2\rho \\
        1 & 0 & 0 \\
        (1-\beta)^2 & -\mu^2\rho\left(3\gamma^2 + \rho^2\chi\right) & 2(1-\beta)\mu\gamma \\
        0 & 0 & 1 \\
        -\rho(1-\beta) & -3\mu\rho\gamma & 1 - \beta -\mu\rho\gamma
    \end{smallmatrix}\right],
\end{align}
with positive real parameters $\mu, \chi, \psi, \rho$, and~${\beta \in (0,2)}$, where $\gamma = \sqrt{\chi\psi}$ as shown in \Cref{remark:gamma}.

\begin{lemma}
    \label{lemma:second_moment_terms}
    Consider system \eqref{sys_def} with \Cref{assump:ghw}, and suppose \Cref{assump:quadratic} holds. Then at any $k\geq 1$
    \begin{align}
         \label{lemma:second_moment_terms:1} \hspace{-0.2cm}\E\left[\Tilde{x}_{k+1}^2\right] &= \E\left[\Tilde{x}_{k}^2\right] + \left(\psi + \rho^2\right)\E\left[y_k^2\right] - 2\rho\E[\Tilde{x}_{k}y_k] \notag \\ &+ \varepsilon\psi (\varepsilon + 2\E\left[|y_k|\right]) \notag \\ &= A_{ms}^{(2,*)}\E\left[\zeta_k\right] + \varepsilon\psi(\varepsilon + 2\E\left[|y_k|\right]), \\
        \label{lemma:second_moment_terms:2}   \E\left[y_{k+1}^2 \right] &= (1-\beta)^2\E\left[y_k^2\right] + 2(1-\beta)\mu\gamma\E[\Tilde{x}_{k}y_k] \notag \\   &+ \mu^2\left(\rho^2\chi +  \gamma^2\right)\E\left[\Tilde{x}_{k-1}^2\right]   \notag \\ &- \mu^2\rho\left(3\gamma^2 + \rho^2\chi \right)\E\left[\Tilde{x}_{k-1}y_{k-1}\right] \notag \\ &+\frac{\mu^2\left(\rho^4\chi +\gamma^2\psi + 6\gamma^2\rho^2\right)}{4}\E\left[y_{k-1}^2\right]  + R_y(\varepsilon) \notag \\ &= A_{ms}^{(4,*)}\E\left[\zeta_k\right] + R_y(\varepsilon), 
    \end{align}
    where $\zeta_k$ and $A_{ms}$ are as in \eqref{def:zeta} and \eqref{A_ms_matrix}, respectively, and 
    \begin{align}
        \label{lemma:second_moment_terms:Ry}
        R_y(\varepsilon) &:= \mu^2\varepsilon\Bigg(\frac{\gamma^2\psi}{4}(\varepsilon + 2\E\left[|y_{k-1}|\right]) \notag \\ &-\frac{\rho^2\chi}{2}\E\left[T_1(y_{k-1},\varepsilon)\Big(\Tilde{x}_{k}^2 + \Tilde{x}_{k-1}^2\Big)\right] \notag \\ &+\frac{\rho^2(\rho^2\chi + 2\gamma^2)}{4}\E\left[T_1(y_{k-1},\varepsilon)y_{k-1}^2\right] \notag \\ &+\frac{\rho^2\gamma^2}{2}\E\left[T_2(y_{k-1},\varepsilon)\right] \Bigg), \\
        T_1(y,\varepsilon) &:= \frac{\varepsilon + 2|y|}{(|y| + \varepsilon)^2}, \\
        T_2(y,\varepsilon) &:= \frac{\varepsilon(\varepsilon + 2|y|)^2}{(|y| + \varepsilon)^2}.
    \end{align}
\end{lemma}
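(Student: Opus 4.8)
The plan is to compute each second-moment update by directly squaring the one-step dynamics \eqref{sys_def}, taking expectations, and eliminating the odd moments of $w_{k-1}$ (respectively $w_k$) through the independence and parity properties of \Cref{proposition:general_properties}. As in the preceding lemmas I would set $x^* = 0$ without loss of generality, since both maps in \eqref{sys_def} are affine in $\tilde x$.

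For \eqref{lemma:second_moment_terms:1} I would write $\tilde x_{k+1} = (\tilde x_k - \rho y_k) + (|y_k|+\varepsilon)g_k$ and expand the square. The cross term $2(\tilde x_k - \rho y_k)(|y_k|+\varepsilon)g_k$ has zero expectation because $(x_k,y_k)$ is independent of $w_k$ by \eqref{proposition:general_properties:5} and $\E[g_k]=0$ by \eqref{proposition:general_properties:2}, while the pure-noise term gives $\psi\,\E[(|y_k|+\varepsilon)^2]$. Expanding $(|y_k|+\varepsilon)^2 = y_k^2 + \varepsilon(\varepsilon + 2|y_k|)$ and collecting the deterministic pieces immediately yields the row $A_{ms}^{(2,*)}\E[\zeta_k]$ plus the residual $\varepsilon\psi(\varepsilon + 2\E[|y_k|])$.

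For \eqref{lemma:second_moment_terms:2} I would first substitute $J(x_k)-J(x_{k-1}) = \mu\Delta_{k-1}$ from \Cref{proposition:quadratic_decomposition}, so that $y_{k+1} = (1-\beta)y_k + \tfrac{\mu h_{k-1}}{|y_{k-1}|+\varepsilon}\Delta_{k-1}$, and square. The term $(1-\beta)^2 y_k^2$ is immediate. The cross term I would handle via \Cref{proposition:expectation_h_k-1_delta_k-1} with $\bar f = y_k$ (admissible, since $y_k$ depends only on $w_0,\dots,w_{k-2}$ and so is independent of $w_{k-1}$), producing $2(1-\beta)\mu\gamma\,\E[y_k(x_{k-1}-\rho y_{k-1})]$; using the $x$-update and $\E[g_{k-1}]=0$ then gives $\E[y_k(x_{k-1}-\rho y_{k-1})] = \E[\tilde x_k y_k]$ and hence the coefficient $2(1-\beta)\mu\gamma$. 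The substantive work is the quadratic term $\mu^2 h_{k-1}^2 (|y_{k-1}|+\varepsilon)^{-2}\Delta_{k-1}^2$: writing $\Delta_{k-1} = a s g_{k-1} + d + \tfrac{s^2}{2}g_{k-1}^2$ with $a := x_{k-1}-\rho y_{k-1}$, $s := |y_{k-1}|+\varepsilon$ and $d := -\rho x_{k-1}y_{k-1} + \tfrac{\rho^2}{2}y_{k-1}^2$, I would expand $\Delta_{k-1}^2$, multiply by $h_{k-1}^2/s^2$, and take expectations. Every odd power of $w_{k-1}$ vanishes by \eqref{proposition:general_properties:2}, leaving only the even moments $\E[h_{k-1}^2]=\chi$, $\E[h_{k-1}^2g_{k-1}^2]=\gamma^2$ and $\E[h_{k-1}^2g_{k-1}^4]=\gamma^2\psi$ supplied by \Cref{proposition:general_properties}, weighting the terms $\gamma^2\E[a^2]$, $\chi\E[d^2/s^2]$, $\tfrac14\gamma^2\psi\,\E[s^2]$ and $\gamma^2\E[d]$.

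The main obstacle is separating this last expectation cleanly into the $\varepsilon$-free principal part $A_{ms}^{(4,*)}\E[\zeta_k]$ and the $\varepsilon$-dependent remainder $R_y(\varepsilon)$. The only $\varepsilon$-sensitive pieces are $\E[d^2/s^2]$, where $d$ carries no factor of $s$, and $\tfrac14\gamma^2\psi\,\E[s^2]$. For the former I would use $y^2/s^2 = 1 - \varepsilon T_1(y,\varepsilon)$ (which is just $y^2 + \varepsilon(\varepsilon+2|y|) = (|y|+\varepsilon)^2$ rearranged) to peel off the principal part, and for the latter $\E[s^2] = \E[y^2] + \varepsilon(\varepsilon+2\E[|y|])$; matching the coefficients of $\E[\tilde x_{k-1}^2]$, $\E[y_{k-1}^2]$ and $\E[\tilde x_{k-1}y_{k-1}]$ then reproduces the entries $A_{ms}^{(4,1)}$, $A_{ms}^{(4,3)}$, $A_{ms}^{(4,5)}$ exactly. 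To recast the leftover $\varepsilon T_1$-terms into the stated form of $R_y(\varepsilon)$, I would re-express $\E[T_1\,\tilde x_k^2]$ through the $x$-update $x_k = a + sg_{k-1}$ (again annihilating the odd $w_{k-1}$-moment, and noting $T_1 s^2 = \varepsilon + 2|y|$) and invoke the identity $T_1(y,\varepsilon)y^2 + T_2(y,\varepsilon) = \varepsilon + 2|y|$, which follows from the same relation $y^2 + \varepsilon(\varepsilon + 2|y|) = (|y|+\varepsilon)^2$. All the delicate bookkeeping lives in this final regrouping; everything else is routine algebra.
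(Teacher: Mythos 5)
Your proposal is correct and follows essentially the same route as the paper: square the updates, kill the odd moments of $w_k$ (resp.\ $w_{k-1}$) via \Cref{proposition:general_properties}, handle the cross term with \Cref{proposition:expectation_h_k-1_delta_k-1}, and reduce the quadratic term to the even moments $\chi$, $\gamma^2$, $\gamma^2\psi$ using the same decomposition of $\Delta_{k-1}$. The only (cosmetic) difference is in the final regrouping of the $\varepsilon$-remainder --- the paper substitutes $-\rho\tilde{x}_{k-1}y_{k-1}$ via the $x$-update before taking expectations, whereas you trade $\E\left[T_1\tilde{x}_{k-1}y_{k-1}\right]$ for $\E\left[T_1\tilde{x}_k^2\right]$ afterwards and split off $T_2$ with the identity $T_1y^2+T_2=\varepsilon+2|y|$; both yield exactly the stated $R_y(\varepsilon)$.
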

\begin{proof}[Proof of \Cref{lemma:second_moment_terms}]
    For the sake of simplicity we assume $x^{*} = 0$. The same results may be obtained by using $\Tilde{x}_k$ and $\Tilde{x}_{k+1}$ in~\eqref{sys_def}, which due to linearity does not influence the system dynamics. We start out with $\E\left[x_{k+1}^2\right]$, where we plug in~$\hat{g}(y_k,w_k) = (|y_k| + \varepsilon)g_k$, and by squaring \eqref{sys_def_x} and rearranging 
    \begin{align*}
        x_{k+1}^2 &= x_k^2 + (\rho^2 + g_k^2)y_k^2 - 2\rho x_ky_k \\ &+ 2(|y_k|+\varepsilon)g_k(x_k - \rho y_k) + \varepsilon(\varepsilon + 2|y_k|)g_k^2.
    \end{align*}
    From   \eqref{proposition:general_properties:5} with $$f(x_k,y_k,y_{k+1}) = (|y_k|+\varepsilon)(x_k - \rho y_k)$$ and further from   
    \eqref{proposition:general_properties:2} we get 
    \begin{align*}
        \E\left[(|y_k|+\varepsilon)g_k(x_k - \rho y_k) \right] &=\E\left[(|y_k|+\varepsilon)(x_k - \rho y_k) \right] \\ &\times \E\left[g_k\right] = 0,
    \end{align*}
    thus by taking expectation and employing   \eqref{proposition:general_properties:5} we get
    \begin{align*}
        \E\left[x_{k+1}^2\right] &= \E\left[x_k^2\right] + (\rho^2 + \psi)\E\left[y_k^2\right] - 2\rho \E\left[x_ky_k\right] \\ &+ \varepsilon\psi(\varepsilon + 2\E\left[|y_k|\right]).
    \end{align*}
    Next, by multiplying out $A_{ms}^{(2,*)}\E\left[\zeta_k\right]$ we observe that $\E\left[x_{k+1}^2\right] = A_{ms}^{(2,*)}\E\left[\zeta_k\right] + \varepsilon\psi(\varepsilon + 2\E\left[|y_k|\right])$, thus we prove relation \eqref{lemma:second_moment_terms:1}.

    We proceed with \eqref{lemma:second_moment_terms:2}. Using \eqref{sys_def_y}, \Cref{proposition:quadratic_decomposition} and plugging in $\hat{h}(y_{k-1},w_{k-1}) = \frac{h_{k-1}}{|y_{k-1}| + \varepsilon}$, we rearrange the terms in $y_{k+1}^2$ to get the equivalent relation
    \begin{align*}
        y_{k+1}^2 &= (1-\beta)^2y_k^2 + \mu^2\frac{h_{k-1}^2}{(|y_{k-1}| + \varepsilon)^2}\Delta_{k-1}^2 \\ &+ 2\mu{(1-\beta)}\frac{ h_{k-1}y_k}{|y_{k-1}| + \varepsilon}\Delta_{k-1},
    \end{align*}
    with $\Delta_{k-1}$ as in \eqref{proposition:DELTA}. By using \Cref{proposition:expectation_h_k-1_delta_k-1} with $\bar{f}(x_{k-1},y_{k-1},y_k) = y_k$ we obtain 
    \begin{align*}
        2\mu{(1-\beta)}\E\left[\frac{ h_{k-1}y_k}{|y_{k-1}| + \varepsilon}\Delta_{k-1}\right] &= 2(1-\beta)\mu  \gamma \Big(\E[x_{k-1}y_k] \\ &-\rho \E[y_{k-1}y_k]\Big),
    \end{align*}
    where we use \eqref{proposition:general_properties:5} with $f(x_{k-1},y_{k-1}y_k) = (|y_{k-1}|+\varepsilon)y_k$ at time step $k-1$ to claim stochastic independence from the term $g_{k-1}$, thereby
    \begin{align}
        \label{proof:second_moment_lemma:1}
        \E[(|y_{k-1}|+\varepsilon)g_{k-1}y_k]  &= \E[(|y_{k-1}|+\varepsilon)y_k]\E\left[g_{k-1}\right]  \overset{\eqref{proposition:general_properties:2}}{=} 0.
    \end{align}
    Then, as from \eqref{sys_def_x} we have $$x_{k} = x_{k-1} - \rho y_{k-1} + (|y_{k-1}|+\varepsilon)g_{k-1},$$ by multiplying with $y_k$, taking expectation and plugging in the zero mean term \eqref{proof:second_moment_lemma:1} we get
    \begin{align}
    \label{proof:lemma:second_moments_2}
        2\mu{(1-\beta)}\E\left[\frac{ h_{k-1}y_k}{|y_{k-1}| + \varepsilon}\Delta_{k-1}\right] &= 2(1-\beta)\mu  \gamma\E[x_{k}y_k].
    \end{align}  
    Next, by plugging in \eqref{proposition:DELTA} in $\mu^2\frac{h_{k-1}^2}{(|y_{k-1}| + \varepsilon)^2}\Delta_{k-1}^2$ and rearranging we get
    \begin{align*}
        h_{k-1}^2\mu^2\Bigg[&(x_{k-1}-\rho y_{k-1})^2g_{k-1}^2 + \frac{g_{k-1}^4}{4}(|y_{k-1}| + \varepsilon)^2 \\
         +& \frac{\rho^2g_{k-1}^2}{2}y_{k-1}^2 - \rho x_{k-1}y_{k-1}g_{k-1}^2
        \\ +& \frac{\rho^2 y_{k-1}^2}{(|y_{k-1}| +  \varepsilon)^2}\left(x_{k-1}^2 + \frac{\rho^2}{4}y_{k-1}^2 - \rho x_{k-1}y_{k-1} \right) \\ +& 2\frac{(x_{k-1} - \rho y_{k-1})g_{k-1}}{|y_{k-1}| + \varepsilon}\Big(-\rho x_{k-1}y_{k-1} + \frac{\rho^2}{2}y_{k-1}^2  \\ +& \frac{g_{k-1}^2}{2}(|y_{k-1}| + \varepsilon)^2\Big)  \Bigg].
    \end{align*}
    For the term $\frac{\rho^2 y_{k-1}^2}{(|y_{k-1}| +  \varepsilon)^2}$ we now use $y_{k-1}^2 = (|y_{k-1}| +\varepsilon - \varepsilon)^2 $, which is further equal to $(|y_{k-1}| + \varepsilon)^2 - 2\varepsilon|y_{k-1}| - \varepsilon^2$. Thus, by rearrangement we get
    \begin{align*}
        h_{k-1}^2\mu^2\Bigg[&x_{k-1}^2(\rho^2 + g_{k-1}^2)  - \rho (3g_{k-1}^2 + \rho^2) x_{k-1}y_{k-1}  \\ +& y_{k-1}^2 \left(\frac{6\rho^2 g_{k-1}^2 + \rho^4 + g_{k-1}^4}{4}\right)  \\+& \frac{g_{k-1}^4}{4}(2|y_{k-1}| + \varepsilon) - \rho^2\frac{\varepsilon (2|y_{k-1}| + \varepsilon)}{(|y_{k-1}| +  \varepsilon)^2} \\ \times&\left(x_{k-1}^2 + \frac{\rho^2}{4}y_{k-1}^2 -\rho x_{k-1}y_{k-1} \right) \\ +& 2\frac{(x_{k-1} - \rho y_{k-1})g_{k-1}}{|y_{k-1}| + \varepsilon}\Big(-\rho x_{k-1}y_{k-1} + \frac{\rho^2}{2}y_{k-1}^2  \\ +& \frac{g_{k-1}^2}{2}(|y_{k-1}| + \varepsilon)^2\Big)  \Bigg].
    \end{align*}
    We rewrite the following term as
    \begin{align*}
        &2h_{k-1}^2\mu^2\frac{(x_{k-1} - \rho y_{k-1})g_{k-1}}{|y_{k-1}| + \varepsilon}\Big(-\rho x_{k-1}y_{k-1} + \frac{\rho^2}{2}y_{k-1}^2  \\ &+ \frac{g_{k-1}^2}{2}(|y_{k-1}| + \varepsilon)^2\Big)  =: \tilde{C}\tilde{f}(x_{k-1},y_{k-1})h_{k-1}^2g_{k-1}
    \end{align*}
   where $\tilde{C}$ is a constant term, and due to  \eqref{proposition:general_properties:5} and \eqref{proposition:general_properties:2} the above term has expectation equal to $0$. Further, expanding $x_k$ w.r.t. time step $k-1$ using \eqref{sys_def_x}, squaring and rearranging we get
   \begin{align*}
       -\rho x_{k-1}y_{k-1} &= \frac{1}{2}x_{k}^2 - \frac{1}{2}x_{k-1}^2 - \frac{\rho^2+g_{k-1}^2}{2}y_{k-1}^2 \\ &-g_{k-1}(x_{k-1} - \rho y_{k-1})(|y_{k-1}| + \varepsilon) \\ &- \frac{\varepsilon}{2}(\varepsilon + 2|y_{k-1}|)g_{k-1}^2,
   \end{align*}
   which we then plug into $\frac{\varepsilon\rho^3(2|y_{k-1}| + \varepsilon)}{(|y_{k-1}| + \varepsilon)^2}x_{k-1}y_{k-1}$.
   By taking the expectation of $\mu^2\frac{h_{k-1}^2}{(|y_{k-1}|+\varepsilon)^2}\Delta_{k-1}^2$, employing   \eqref{proposition:general_properties:5} and~\eqref{proposition:general_properties:3}, and rearranging, we get
   \begin{align}
    \label{proof:lemma:second_moments_3}
        \E\left[\frac{\mu^2h_{k-1}^2\Delta_{k-1}^2}{(|y_{k-1}| + \varepsilon)^2}\right]=&\mu^2(\rho^2\chi + \gamma^2)\E\left[x_{k-1}^2\right] \notag \\+&  \mu^2\frac{\gamma^2\psi + 6\rho^2\gamma^2 + \rho^4\chi}{4}\E\left[y_{k-1}^2\right] \notag\\ -& \mu^2\rho(3\gamma^2 + \rho^2\chi)\E\left[x_{k-1}y_{k-1}\right] \notag\\ +& \frac{\mu^2\gamma^2\psi\varepsilon}{4}(\varepsilon + 2\E\left[|y_{k-1}|\right]) \notag\\ +&\frac{\mu^2\rho^2\gamma^2}{2}\E\left[\frac{\varepsilon^2(\varepsilon+2|y_{k-1}|)^2}{(|y_{k-1}| + \varepsilon)^2}\right] \notag\\ -&\frac{\mu^2\rho^2\chi}{2}\E\left[\varepsilon T_1(y_{k-1},\varepsilon)(x_{k-1}^2 + x_{k}^2)\right] \notag \\+&
        \frac{\mu^2\rho^2(2\gamma^2 + \rho^2\chi)}{4} \notag \\  \times&\E\left[\varepsilon T_1(y_{k-1},\varepsilon)y_{k-1}^2\right],
    \end{align}
    where $T_1(y_{k-1},\varepsilon) = \frac{(2|y_{k-1}| + \varepsilon)}{(|y_{k-1}| + \varepsilon)^2}$ as defined in \Cref{lemma:second_moment_terms}.
   By summing up $(1-\beta)^2\E\left[y_k^2\right]$ with \eqref{proof:lemma:second_moments_2} and \eqref{proof:lemma:second_moments_3}, we get the required expression. To finally prove that $\E\left[y_{k+1}^2\right] = A_{ms}^{(4,*)}\E\left[\zeta_k\right] + R_y(\varepsilon)$, one simply needs to multiply $A_{ms}^{(4,*)}\E\left[\zeta_k\right]$ and compare with the the sum of $(1-\beta)^2\E\left[y_k^2\right]$, \eqref{proof:lemma:second_moments_2} and \eqref{proof:lemma:second_moments_3}.
\end{proof}

\begin{lemma}
    \label{lemma:second_moment_aux_terms}
    Consider system \eqref{sys_def} with \Cref{assump:ghw}, and suppose \Cref{assump:quadratic} holds. Then at any $k\geq 1$
    \begin{align}
        \label{lemma:equation:second_moment_aux_terms}
        \E[\Tilde{x}_{k+1}y_{k+1}] &= -\rho(1-\beta)\E[y_k^2] + \mu\gamma\E\left[\Tilde{x}_{k-1}^2\right]  \notag \\ &+\mu\gamma\frac{3\rho^2 + \psi}{2}\E\left[y_{k-1}^2\right]  \notag \\ & -3\mu\rho\gamma\E\left[\Tilde{x}_{k-1}y_{k-1}\right]  \notag \\ & + (1-\beta - \mu\rho\gamma)\E\left[\Tilde{x}_{k}y_k\right] \notag \\ &+ \frac{\mu\gamma\psi\varepsilon}{2}(\varepsilon + 2\E\left[|y_{k-1}|\right]) \notag \\ &= A_{ms}^{(6,*)}\E\left[\zeta_k\right] + \frac{\mu\gamma\psi\varepsilon}{2}(\varepsilon + 2\E\left[|y_{k-1}|\right]), 
    \end{align}
    where $\zeta_k$ and $A_{ms}$ are as in \eqref{def:zeta} and \eqref{A_ms_matrix}, respectively.
\end{lemma}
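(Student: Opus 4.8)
The plan is to follow the same template as the proof of \Cref{lemma:second_moment_terms}: assume $x^{*}=0$ so that $\tilde{x}_k=x_k$ (the general case following by linearity), form the product $x_{k+1}y_{k+1}$ from \eqref{sys_def}, substitute $\hat{g}_k=(|y_k|+\varepsilon)g_k$ and $\hat{h}_{k-1}=h_{k-1}/(|y_{k-1}|+\varepsilon)$, and replace $J(x_k)-J(x_{k-1})$ by $\mu\Delta_{k-1}$ via \Cref{proposition:quadratic_decomposition}. This writes $x_{k+1}y_{k+1}$ as a sum of two groups: an ``undelayed'' group $(1-\beta)y_k\,x_{k+1}$ and a ``delayed'' group $x_{k+1}\,\frac{\mu h_{k-1}}{|y_{k-1}|+\varepsilon}\Delta_{k-1}$. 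I would take the expectation of each group separately.

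For the undelayed group, expanding $x_{k+1}=x_k-\rho y_k+(|y_k|+\varepsilon)g_k$ and invoking the independence property \eqref{proposition:general_properties:5} together with $\E[g_k]=0$ from \eqref{proposition:general_properties:2} annihilates the $g_k$-term, leaving $(1-\beta)(\E[x_k y_k]-\rho\E[y_k^2])$, which already lives in the span of the $\zeta_k$-coordinates. The delayed group is where the real work lies. First split $x_{k+1}=(x_k-\rho y_k)+\hat{g}_k$; since the $g_k$ factor of $\hat{g}_k$ is zero-mean and independent of the $(k-1)$-indexed coefficient multiplying it, that contribution vanishes by the independence argument underlying \eqref{proposition:general_properties:5}. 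It then remains to evaluate $\E[(x_k-\rho y_k)\frac{\mu h_{k-1}}{|y_{k-1}|+\varepsilon}\Delta_{k-1}]$. Here $x_k-\rho y_k$ is \emph{not} independent of $w_{k-1}$, so \Cref{proposition:expectation_h_k-1_delta_k-1} does not apply directly; to repair this I would substitute $x_k=x_{k-1}-\rho y_{k-1}+\hat{g}_{k-1}$ to write $x_k-\rho y_k=(x_{k-1}-\rho y_{k-1}-\rho y_k)+\hat{g}_{k-1}$.

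The bracketed part depends only on $x_{k-1},y_{k-1},y_k$, all independent of $w_{k-1}$, so \Cref{proposition:expectation_h_k-1_delta_k-1} applies with $\bar{f}=x_{k-1}-\rho y_{k-1}-\rho y_k$ and yields $\mu\gamma\,\E[(x_{k-1}-\rho y_{k-1}-\rho y_k)(x_{k-1}-\rho y_{k-1})]$. The leftover $\hat{g}_{k-1}$-part, after the factor $|y_{k-1}|+\varepsilon$ cancels, collapses to $\E[\mu g_{k-1}h_{k-1}\Delta_{k-1}]$, which is exactly \Cref{proposition:expectation_gamma_delta_k-1}; I would then use $\E[(|y_{k-1}|+\varepsilon)^2]=\E[y_{k-1}^2]+\varepsilon(\varepsilon+2\E[|y_{k-1}|])$ to separate the $\varepsilon$-dependent remainder.

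The main obstacle, and the step demanding care, is that a naive expansion of the \Cref{proposition:expectation_h_k-1_delta_k-1}-term generates cross-moments such as $\E[x_{k-1}y_k]$ and $\E[y_{k-1}y_k]$ that do not belong to $\zeta_k$. The trick that avoids them is to keep $x_{k-1}-\rho y_{k-1}$ as a single unit and exploit the identity $x_{k-1}-\rho y_{k-1}=x_k-\hat{g}_{k-1}$ together with $\E[\hat{g}_{k-1}y_k]=0$ (valid since $y_k$ is independent of $w_{k-1}$ and $\hat{g}_{k-1}$ is zero-mean), which converts $\E[y_k(x_{k-1}-\rho y_{k-1})]$ directly into $\E[x_k y_k]$, while $\E[(x_{k-1}-\rho y_{k-1})^2]$ expands cleanly into $\E[x_{k-1}^2]$, $\E[x_{k-1}y_{k-1}]$, and $\E[y_{k-1}^2]$. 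Collecting coefficients, I would then verify term by term that the result matches both the explicit right-hand side in \eqref{lemma:equation:second_moment_aux_terms} and the row $A_{ms}^{(6,*)}\E[\zeta_k]$ read off from \eqref{A_ms_matrix}, with the $\varepsilon$-dependent remainder reducing precisely to $\frac{\mu\gamma\psi\varepsilon}{2}(\varepsilon+2\E[|y_{k-1}|])$.
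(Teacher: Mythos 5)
Your proposal is correct and follows essentially the same route as the paper's proof: both reduce the computation to \Cref{proposition:expectation_h_k-1_delta_k-1} (applied to combinations of $x_{k-1},y_{k-1},y_k$), to \Cref{proposition:expectation_gamma_delta_k-1} for the $h_{k-1}g_{k-1}\Delta_{k-1}$ term exposed by the backward substitution $x_k = x_{k-1}-\rho y_{k-1}+\hat{g}_{k-1}$, and to the identity $\E[(x_{k-1}-\rho y_{k-1})y_k]=\E[x_k y_k]$ to stay within the coordinates of $\zeta_k$. The only difference is organizational: you expand $y_{k+1}$ first and invoke \Cref{proposition:expectation_h_k-1_delta_k-1} once with the combined $\bar f = x_{k-1}-\rho y_{k-1}-\rho y_k$, whereas the paper expands $x_{k+1}$ first and applies the same lemma separately with $\bar f = y_k$, $\bar f = x_{k-1}$, and $\bar f = y_{k-1}$; by linearity these are equivalent, and your collected coefficients match \eqref{lemma:equation:second_moment_aux_terms} and $A_{ms}^{(6,*)}$.
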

\begin{proof}[Proof of \Cref{lemma:second_moment_aux_terms}]
    For the sake of simplicity we assume ${x^{*} = 0}$. The same results may be obtained by using $\Tilde{x}_k$ and $\Tilde{x}_{k+1}$ in \eqref{sys_def}, which due to linearity does not influence the system dynamics. 
    We shall use the properties stated in \Cref{proposition:general_properties}, and start by plugging in $\hat{g}(y_k,w_k) = (|y_k| + \varepsilon)g_k$ and expanding $x_{k+1}$ using \eqref{sys_def_x}, thus getting
    \begin{align*}
        \E[x_{k+1}y_{k+1}] &= \E[x_ky_{k+1}] - \rho\E[y_ky_{k+1}] \\ &+ \E[(|y_k|+\varepsilon)g_ky_{k+1}].
    \end{align*}
    For the last term   \eqref{proposition:general_properties:5} applies with $$f(x_k,y_k,y_{k+1}) = (|y_k| + \varepsilon)y_{k+1},$$ and further due to  \eqref{proposition:general_properties:2} we get that
    \begin{align}
        \label{proof:lemma:second_aux_moments:star}
        \E[(|y_k|+\varepsilon)g_ky_{k+1}] = \E[(|y_k|+\varepsilon)y_{k+1}]\E\left[g_{k}\right] =0,
    \end{align}
    and we reuse this property throughout the proof.
    We proceed with $\E\left[y_ky_{k+1}\right]$. Using \eqref{sys_def_y} to expand $y_{k+1}$ using $\Delta_{k-1}$ from \Cref{proposition:quadratic_decomposition} and $\hat{h}(y_{k-1},w_{k-1}) = \frac{h_{k-1}}{|y_{k-1}|+\varepsilon}$ we get 
    \begin{align*}
        \E[y_ky_{k+1}] &= {(1-\beta)\E[y_k^2]} + \mu\E\left[\frac{y_kh_{k-1}}{|y_{k-1}|+\varepsilon}\Delta_{k-1}\right].
    \end{align*}
    By using \Cref{proposition:expectation_h_k-1_delta_k-1} with $\bar{f}(x_{k-1},y_{k-1},y_k) = y_k$, we get 
    \begin{align*}
        \E[y_ky_{k+1}] &= (1-\beta)\E[y_k^2] + \mu\gamma\E\left[x_{k-1}y_k\right] \notag \\ &- \mu\rho \gamma\E\left[y_{k-1}y_k\right].
    \end{align*}
    As previously shown, from \eqref{proposition:general_properties:5} and \eqref{proposition:general_properties:2} it follows that~$\E[(|y_{k-1}| + \varepsilon)g_{k-1}y_{k}] = 0$. Thus,
    \begin{align*}
        \E\left[x_ky_k\right]&\overset{\eqref{sys_def}}{=} 
        \E\left[(x_{k-1}-\rho y_{k-1} + (|y_{k-1}| + \varepsilon)g_{k-1})y_k\right]\\
        &=\E\left[x_{k-1}y_k\right] - \rho\E\left[y_{k-1}y_k\right],
    \end{align*}
    and thereby,
    \begin{align}
        \label{proof:lemma:second_moment_aux_terms:0}
        \E[y_ky_{k+1}] &= (1-\beta)\E[y_k^2] + \mu\gamma\E\left[x_{k}y_k\right].
    \end{align}
    We proceed with $\E[x_ky_{k+1}]$. Using \eqref{sys_def_x} to expand $x_k$, we get 
    \begin{align*}
        \E[x_ky_{k+1}] &= \E\left[x_{k-1}y_{k+1}\right] - \rho \E\left[y_{k-1}y_{k+1}\right] \\ &+ \E\left[(|y_{k-1}|+\varepsilon)g_{k-1}y_{k+1}\right]
    \end{align*}
    By expanding $y_{k+1}$ using \eqref{sys_def_y}, for the first term we get 
    \begin{align*}
        \E\left[x_{k-1}y_{k+1}\right] &= (1-\beta)\E[x_{k-1}y_k] \\ &+ \mu\E\left[\frac{x_{k-1}h_{k-1}}{|y_{k-1}|+\varepsilon}\Delta_{k-1}\right]
    \end{align*}
    By further taking into account \Cref{proposition:expectation_h_k-1_delta_k-1} with~${\bar{f}(x_{k-1},y_{k-1},y_k) = x_{k-1}}$ we get 
    \begin{align}
        \label{proof:lemma:second_moment_aux_terms:1}
        \E\left[x_{k-1}y_{k+1}\right] &= (1-\beta)\E[x_{k-1}y_k] + \mu\gamma\E\left[x_{k-1}^2\right] \notag \\ &- \mu\rho\gamma\E[x_{k-1}y_{k-1}].
    \end{align}
    Next, by expanding $y_{k+1}$ using \eqref{sys_def_y} we get 
    \begin{align*}
        \E\left[y_{k-1}y_{k+1}\right] &= (1-\beta)\E\left[y_{k-1}y_k\right] \\ &+\mu\E\left[\frac{y_{k-1}h_{k-1}}{|y_{k-1}| + \varepsilon}\Delta_{k-1}\right]
    \end{align*}
    where we once again refer to \Cref{proposition:expectation_h_k-1_delta_k-1} with~${\bar{f}(x_{k-1},y_{k-1},y_k) = y_{k-1}}$ to get
    \begin{align}
        \label{proof:lemma:second_moment_aux_terms:2}
        \E\left[y_{k-1}y_{k+1}\right] &= (1-\beta)\E[y_{k-1}y_k]\notag \\ & + \mu\gamma\left(\E\left[x_{k-1}y_{k-1}\right]  - \rho\E\left[y_{k-1}^2\right]\right).
    \end{align}
    By expanding $y_{k+1}$ using \eqref{sys_def_y}, we further get
    \begin{align*}
        \E[(|y_{k-1}|+\varepsilon)g_{k-1}y_{k+1}] &= (1-\beta)\E[(|y_{k-1}| + \varepsilon)g_{k-1}y_k] \\ &+ \mu\E\left[h_{k-1}g_{k-1}\Delta_{k-1}\right],
    \end{align*}
    where we observe that
    \begin{align*}
        \E[(|y_{k-1}| + \varepsilon)g_{k-1}y_k] &\overset{\eqref{proposition:general_properties:5}}{=} \E[(|y_{k-1}| + \varepsilon)y_k]\E\left[g_{k-1}\right] \\ &\overset{\eqref{proposition:general_properties:2}}{=} 0.
    \end{align*}
    Next, we have 
    \begin{align}
        \label{proof:lemma:second_moment_aux_terms:3}
        \E\left[(|y_{k-1}| + \varepsilon)g_{k-1}y_{k+1}\right] &\overset{\eqref{sys_def_y}}{=} 
        (1-\beta) \notag \\ & \times \E\left[(|y_{k-1}| + \varepsilon)g_{k-1}y_{k}\right] \notag \\ &+ \mu\E\Big[(|y_{k-1}| + \varepsilon)\frac{h_{k-1}g_{k-1}}{|y_{k-1}| + \varepsilon} \notag \\ & \times \Delta_{k-1}\Big] \notag \\ &\overset{ \eqref{proposition:general_properties:2}}{=} \mu\E\Big[h_{k-1}g_{k-1}\Delta_{k-1}\Big]
        \notag \\ &\overset{\eqref{proposition:equation:expectation_gamma_delta_k-1}}{=} \mu\gamma\frac{\rho^2 + \psi}{2}\E\left[y_{k-1}^2\right]  \notag \\ &-\mu\rho\gamma\E\left[x_{k-1}y_{k-1}\right] \notag \\ &+\mu\gamma\frac{\varepsilon\psi}{2}(\varepsilon + 2\E\left[|y_{k-1}|\right]).
    \end{align}
    Thus, by first scaling \eqref{proof:lemma:second_moment_aux_terms:2} by $-\rho$ and by summing it with \eqref{proof:lemma:second_moment_aux_terms:1} and \eqref{proof:lemma:second_moment_aux_terms:3}, we get
    \begin{align}
    \label{proof:lemma:second_moment_aux_terms:5}
        \E[x_ky_{k+1}] &= \mu\gamma\frac{3\rho^2 + \psi}{2}\E\left[y_{k-1}^2\right]  \notag \\ & -3\mu\rho\gamma\E\left[x_{k-1}y_{k-1}\right] + \mu\gamma\E\left[x_{k-1}^2\right] \notag \\ & + (1-\beta)\E[(x_{k-1}-\rho y_{k-1})y_k]\notag \\
        &+\mu\gamma\frac{\varepsilon\psi}{2}(\varepsilon + 2\E\left[|y_{k-1}|\right]) \notag \\
        &\overset{\eqref{proof:lemma:second_aux_moments:star} + \eqref{sys_def_x}}{=}\mu\gamma\frac{3\rho^2 + \psi}{2}\E\left[y_{k-1}^2\right]  \notag \\ 
        & -3\mu\rho\gamma\E\left[x_{k-1}y_{k-1}\right] + \mu\gamma\E\left[x_{k-1}^2\right] \notag \\ & + (1-\beta)\E[x_{k}y_k] \notag \\ &+\mu\gamma\frac{\varepsilon\psi}{2}(\varepsilon + 2\E\left[|y_{k-1}|\right]),
    \end{align}
    as $(1-\beta)\E[(|y_{k-1}| + \varepsilon)g_{k-1}y_k] = 0$.
    By scaling \eqref{proof:lemma:second_moment_aux_terms:0} by $-\rho$, and summing it up with \eqref{proof:lemma:second_moment_aux_terms:5} we get the required expression, which we then compare to $$A_{ms}^{(6,*)}\E\left[\zeta_k\right] + \mu\gamma\frac{\varepsilon\psi}{2}(\varepsilon + 2\E\left[|y_{k-1}|\right]),$$ thereby concluding the proof.
\end{proof}

In \Cref{lemma:second_moment_terms} and \Cref{lemma:second_moment_aux_terms}, we have shown that the expectation of the second moment terms at the next time step is linearly dependent on the second moments of the current and previous time steps, as well as on some disturbance term, i.e. terms containing the multiplicand $\varepsilon$. The following lemma is necessary for the analysis of the dynamics of system \eqref{sys_def} in the expectation of the second moment for the limit case of $\varepsilon \to 0$, and for the choice of system parameters.
\begin{lemma}
    \label{lemma:stable_matrices}
    Consider the matrix $A_{ms}$ as defined in \eqref{A_ms_matrix}.
    Then, for every $\chi, \psi$ and~$\mu$ there exist some~${\rho_0 \in \R^{+}}$,~${\beta_0 \in (0,1)}$ such that for any $\rho \in (0, \rho_0]$ and ${\beta \in [1-\beta_0, 1+\beta_0]}$ all eigenvalues of $A_{ms}$ are inside the unit circle.
\end{lemma}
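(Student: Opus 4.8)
My plan is to turn the six-dimensional eigenvalue problem for $A_{ms}$ into a quadratic matrix-pencil problem of half the size and then propagate stability from the limiting parameters by continuity of the spectrum. First I would exploit the structure of \eqref{A_ms_matrix}: rows $1,3,5$ are pure selectors that merely copy $\tilde x_k^2,\,y_k^2,\,\tilde x_k y_k$ into the delayed slots. Writing $u_k:=(\tilde x_k^2,\,y_k^2,\,\tilde x_k y_k)^\top$ and $v_k:=(\tilde x_{k-1}^2,\,y_{k-1}^2,\,\tilde x_{k-1}y_{k-1})^\top$, the second-moment recursion reads $u_{k+1}=M u_k+N v_k$, $v_{k+1}=u_k$, with $M,N\in\R^{3\times3}$ read off rows $2,4,6$. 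Hence $\lambda$ is an eigenvalue of $A_{ms}$ iff $d(\lambda;\rho,\beta):=\det(\lambda^2 I-\lambda M-N)=0$, a degree-six polynomial whose coefficients are polynomials in $\rho,\beta$. This reduction is what makes a later continuity argument clean: since the coefficients are polynomials, the six roots move continuously, so it suffices to keep them strictly inside the unit circle on a neighbourhood of the limiting parameters, exactly as the Jury test was used for $A_E$ in \Cref{lemma:expectation_convergence}.

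Next I would evaluate the pencil at the natural limit $\rho=0,\ \beta=1$. There the $\tilde x y$-coordinate becomes purely downstream (driven by $u^{(1)},u^{(2)}$ but feeding back into nothing) and the $(\tilde x^2,y^2)$-block decouples, so a direct cofactor expansion yields $d(\lambda;0,1)=\lambda^3\bigl(\lambda^3-\lambda^2-c\lambda-3c\bigr)$ with $c:=\tfrac14\mu^2\gamma^2\psi>0$ and $\gamma=\sqrt{\chi\psi}$ as in \Cref{remark:gamma}. The three zero roots are harmless; everything hinges on the cubic factor, which is the algebraic fingerprint of the positive-feedback loop $\tilde x^2\to y^2\to\tilde x^2$ created by the multiplicative exploration $\hat h,\hat g$. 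I would then expand the Jury quantities of $d(\cdot;\rho,\beta)$, in particular $d(\pm1;\rho,\beta)$, to leading order in $\rho$ and $\delta:=1-\beta$, locate the critical root(s), and use the left/right eigenvectors of the limiting pencil to compute the first-order direction in which each critical root moves; finally I would choose $\rho_0,\beta_0$ so that all six roots stay inside the disk for $\rho\in(0,\rho_0]$, $\beta\in[1-\beta_0,1+\beta_0]$.

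The hard part is the limit itself, and I would confront it directly rather than hope it is benign. Evaluating the cubic at $\lambda=1$ gives $d(1;0,1)=1\cdot(1-1-c-3c)=-4c<0$, so the feedback loop already produces a dominant real root strictly greater than one: the limiting matrix is \emph{not} Schur. Stability therefore cannot be inherited from the limit by soft continuity; it must be manufactured entirely by the damping term $-2\rho\,\E[\tilde x_k y_k]$ in row $2$ together with the $O(\rho)$ re-coupling of $\tilde x^2$ to the $\tilde x y$-coordinate. A short perturbation computation makes the difficulty quantitative: carrying the expansion of $d(1;\rho,\beta)$ to first order at $\beta=1$ gives $d(1;\rho,1)=-4c+2\mu\gamma\,(1-c)\,\rho+O(\rho^2)$, so the necessary Jury condition $d(1)>0$ pushes $\rho$ toward a threshold of order $c/(\mu\gamma)$ rather than allowing it to be arbitrarily small.

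I expect this to be the genuine obstacle. The destabilizing amplification $c=\tfrac14\mu^2\gamma^2\psi$ is an $O(1)$ quantity fixed by the data $(\chi,\psi,\mu)$, whereas the stabilizing effect of the restoring term is only $O(\rho)$; reconciling this with the admissible range $\rho\in(0,\rho_0]$ is the crux, and is presumably where the quantitative hypothesis $\beta>\mu\rho\gamma$ of \Cref{theorem:convergence_means_square} and the higher-order $\rho$-terms must be brought to bear. Concretely, I would try to show that the admissible set for $\rho$ is determined by balancing the $O(\rho)$ damping against $c$, verify the full set of Jury inequalities on that set, and then extend in $\beta$ around $1$ by continuity; establishing a single pair $\rho_0,\beta_0$ that works \emph{uniformly} for all $\chi,\psi,\mu$ is the step I would expect to cost the most effort.
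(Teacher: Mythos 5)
Your reduction to the quadratic pencil and your evaluation at $(\rho,\beta)=(0,1)$ are both correct: with $c=\tfrac14\mu^2\gamma^2\psi$ the characteristic polynomial of \eqref{A_ms_matrix} degenerates to $\lambda^3\bigl(\lambda^3-\lambda^2-c\lambda-3c\bigr)$, the cubic factor is negative at $\lambda=1$ and positive at infinity, so the limiting matrix has a real eigenvalue strictly larger than $1$ (for $\mu=\chi=\psi=1$ it is exactly $3/2$). But you should push your own observation to its logical end instead of filing it under ``the crux''. Since $d(1;\rho,\beta)=\det(I-A_{ms})=\prod_i(1-\lambda_i)$ must be positive for a Schur matrix, and your expansion gives $d(1;\rho,1)=-4c+\mathcal{O}(\rho)$ with $c>0$ fixed once $\chi,\psi,\mu$ are fixed, this necessary condition fails for \emph{every} sufficiently small $\rho>0$ at $\beta=1$. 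Hence no bookkeeping of the remaining Jury inequalities can rescue your plan: with the quantifier order of \Cref{lemma:stable_matrices} ($\chi,\psi,\mu$ chosen first, then a threshold $\rho_0$ below which \emph{all} $\rho$ work), the damping you hope to manufacture from the $-2\rho\,\E[\tilde x_k y_k]$ term only yields a \emph{lower} bound on $\rho$ of order $\mu\gamma\psi$, which is incompatible with $\rho\in(0,\rho_0]$. Your proposal therefore cannot be completed as written; carried one step further, it is a refutation of the statement read literally.

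The paper's own proof takes a route that quietly changes the statement: it substitutes $\chi=q_1/\mu^2$ and $\psi=q_2\rho^4$ \emph{before} applying Jury's criterion, so that the destabilizing constant $c=\tfrac14 q_1q_2^2\rho^8$ vanishes much faster than the stabilizing term $2\mu\gamma\rho=2\sqrt{q_1q_2}\,\rho^3$, and $p(1)>0$ then does hold for all small $\rho$. Under that coupling the Jury analysis goes through, but what is actually proven is ``for every $q_1,q_2$ there exists $\rho_0$ such that all $\rho\in(0,\rho_0]$ with $\psi=q_2\rho^4$ work''; to realize a fixed $\psi$ at small $\rho$ one needs $q_2=\psi/\rho^4\to\infty$, and the $\mathcal{O}(\epsilon^3,\rho^5)$ remainders are not uniform in $q_2$, so the paper's assertion that the substitution ``is not restrictive'' is exactly the point your computation calls into question. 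To obtain a provable statement, either adopt the coupled scaling explicitly (and weaken the lemma accordingly), or keep $\chi,\psi,\mu$ fixed and establish Schur stability on an interval $\rho\in[\rho_1,\rho_0]$ bounded away from zero with $\rho_1$ of order $\mu\gamma\psi$; your pencil reduction and first-order root-perturbation plan are well suited to the second option.
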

\begin{proof}[Proof of \Cref{lemma:stable_matrices}]
    The idea of the proof is to analyze the eigenvalues of $A_{ms}$ through the use of the Jury criterion. For the proof we choose $\chi := \frac{q_1}{\mu^2}$ and $\psi := q_2\rho^4$, where $q_1,q_2 \in \R^{+}$ are arbitrary weights. In such a way we argue that $\chi$ and $\psi$ may take any arbitrary positive value, hence this analysis is not restrictive. As $\gamma = \sqrt{\chi\psi}$, i.e. $\gamma^2 = \chi\psi$, observe then that 
    \begin{align*}
        \mu^2\chi &= \mu^2 \frac{q_1}{\mu^2} = q_1, \\
        \mu\gamma &= \mu\sqrt{\frac{q_1}{\mu^2}q_2\rho^4} = \sqrt{q_1q_2}\rho^2, \\
        -\mu^2\rho(3\gamma^2 + \rho^2\chi) &= - \mu^2\rho\chi(3\psi + \rho^2) \\ &= -q_1(3q_2\rho^5 + \rho^3),
    \end{align*}
    from where it is easy to observe that $A_{ms}$ is independent of $\mu$. Thus, the following analysis is based on the parameters $\rho$ and $\beta$, in addition to $q_1, q_2$. We now proceed to analyze the eigenvalues of $A_{ms}$, and obtain the equation
    \begin{align*}
        0 = \lambda(\lambda^5 + a_4\lambda^4 + a_3\lambda^3 + a_2\lambda^2 + a_1\lambda + a_0) =: \lambda p(\lambda),
    \end{align*}
    where
    \begin{align*}
        a_4 &:= 3\beta - \beta^2 + (q_1q_2)^{\frac{1}{2}}\rho^3 - 3\\
        a_3 &:= 3 -6\beta - 0.25q_1\rho^4 + 4\beta^2 - \beta^3 - 1.5q_1q_2\rho^6 \\ &- 0.25q_1q_2^2\rho^8 + (3 - 2\beta + \beta^2)(q_1q_2)^{\frac{1}{2}}\rho^3 \\
        a_2 &:= 3\beta - 1.5q_1\rho^4 - 3\beta^2 + \beta^3 + 0.75\beta q_1\rho^4 - 5q_1q_2\rho^6 \\ &- 1.5q_1q_2^2\rho^8 - 5(q_1q_2)^{\frac{1}{2}}\rho^3 - 0.25q_1^{\frac{3}{2}}q_2^{\frac{1}{2}}\rho^7 \\ &- 1.5q_1^{\frac{3}{2}}q_2^{\frac{3}{2}}\rho^9 - 0.25q_1^{\frac{3}{2}}q_2^{\frac{5}{2}}\rho^{11} + 8\beta (q_1q_2)^{\frac{1}{2}}\rho^3 \\ &+ 4.5\beta q_1q_2\rho^6 - 4\beta^2(q_1q_2)^{\frac{1}{2}}\rho^3 + 0.75\beta q_1q_2^2\rho^8 - 1 \\
        a_1 &:= - 0.25q_1\rho^4 + 0.25\beta q_1\rho^4 + 2.5q_1q_2\rho^6) - 0.25q_1q_2^2\rho^8 \\ &+ (q_1q_2)^{\frac{1}{2}}\rho^3 - 2\beta (q_1q_2)^{\frac{1}{2}}\rho^3 - 2.5\beta q_1q_2\rho^6 \\ &+ \beta^2(q_1q_2)^{\frac{1}{2}}\rho^3 + 0.25\beta q_1q_2^2\rho^8 \\
        a_0 &:= - 0.25q_1^{\frac{3}{2}}q_2^{\frac{1}{2}}\rho^7 - 1.5(q_1q_2)^{\frac{3}{2}}\rho^9 - 0.25q_1^{\frac{3}{2}}q_2^{\frac{5}{2}}\rho^{11}.
    \end{align*}
    Due to Abel's Impossibility Theorem, there exist no closed form solutions for $p(\lambda) = 0$. 
    We resort to using the Jury criterion for the $5$-th order polynomial. Substitute $\beta = 1 + \epsilon$, where $\epsilon \in (-1,1)$, and denote by $\mathcal{O}(\epsilon^p,\rho^q)$ any term which has at least one of the multiplicand terms $\epsilon^p,\rho^q$, where~${ p,q \in \N}$. The necessary conditions of Jury's criterion are $p(1) >0$, $(-1)^{5}p(-1) > 0$ and the coefficient of the highest order term to be positive, where we note the latter to be satisfied. We thus obtain
    \begin{align*}
        p(1) &= - q_1\rho^4(1-\epsilon) + 2(q_1q_2)^{\frac{1}{2}}\rho^3(1- \epsilon^2) \\ &+ \mathcal{O}(\epsilon^3, \rho^5), \\
        -p(-1) &= 2 - 2\epsilon + 2\epsilon^2 + 0.5q_1\rho^4(1- \epsilon)  \\ &+ (2 + 6\epsilon^2)(q_1q_2)^{\frac{1}{2}}\rho^3 + 
        \mathcal{O}(\epsilon^3, \rho^5),
    \end{align*}
    from where it follows that given some $q_1, q_2 > 0$ and a sufficiently small $\rho> 0$ we have $p(1) > 0$, and in addition if~$\epsilon$ is sufficiently small we get $-p(-1) > 0$, hence the necessary conditions of Jury's criterion are fulfilled. The sufficient conditions of Jury's criterion are obtained by the determinant method, and are calculated for order $5$ down to order $2$. Denote by $a_{i,j}$ the coefficient of polynomial order $2 \leq i \leq 5$ of term $0 \leq j \leq i$, as for example in
    \begin{align*}
        p_4(\lambda) = a_{4,4}\lambda^4 + a_{4,3}\lambda^3 + a_{4,2}\lambda^2 + a_{4,1}\lambda + a_{4,0}. 
    \end{align*}
    The sufficient conditions are as shown in continuation: for~${i = 5}$, it must hold that ${a_{5,5} > |a_{5,0}|}$, and for $i = 4,3,2$, it must hold that $|a_{i,i}| < |a_{i,0}|$. In exact, we obtain the inequalities
    \begin{align*}
        1 &> | a_0| \\
        |(-1) + \mathcal{O}(\epsilon, \rho^6)| & > |\mathcal{O}(\epsilon, \rho^3)| \\
        |1 + \mathcal{O}(\epsilon, \rho^3)| & > | \mathcal{O}(\epsilon^2, \rho^3) | \\
        |1 + \mathcal{O}(\epsilon, \rho^3)| & > | -\epsilon + \mathcal{O}(\epsilon, \rho^3)|,
    \end{align*}
    for $i = 5,4,3,2$ respectively. For all inequalities, we observe that as $\rho \to 0$ and $ \epsilon \to 0$, the right hand side approaches $0$ and the left hand side approaches $1$, thus the left hand side constant dominates the right hand side terms for some sufficiently small parameters $\epsilon,\rho$, and this interval may be influenced by the choice of $q_1$ and $q_2$. We thus conclude the sufficient conditions to be satisfied, and thereby conclude that the matrix eigenvalues are inside the unit circle.
\end{proof}

\subsection{Proof of Main Result}
\label{section_IV}
\begin{proof}[Proof of \Cref{theorem:convergence_means_square}]
    Assume $x^{*} = 0$ for ease of notation, hence $\tilde{x}_k = x_k$ for any $k \geq 0$. We aim to use \Cref{lemma:matrix_upper_lower_bounds} to show global exponential practical convergence of the second moment to the equilibrium, and moreover by using \Cref{lemma:expectation_convergence} to claim global exponential convergence in expectation of the system iterates. Throughout the proof, we use that~${\varepsilon \in (0, \varepsilon_0]}$, and for convenience we denote $\varepsilon \in [\epsilon_0, \varepsilon_0]$, where $\varepsilon_0 > \epsilon_0 > 0$.
    
    In the following we aim to find upper and lower bounds for the dynamics given in \Cref{lemma:second_moment_terms} and \Cref{lemma:second_moment_aux_terms}, such that we may use \Cref{lemma:matrix_upper_lower_bounds}. 
    We start by considering \Cref{proposition:bounding_absolute}, from where we have
    \begin{align}   
        \label{proof:theorem:ineq_2}
        0 \leq \E\left[|y_{k-1}|\right] \leq  \frac{1}{4} + \E\left[y_{k-1}^2\right].
    \end{align}
    We use the aforementioned inequalities to bound the term~$\E\left[x_{k+1}^2\right]$, whose dynamics are given in \eqref{lemma:second_moment_terms:1}. By using~\eqref{proof:theorem:ineq_2}, $\psi\varepsilon^2 \geq \psi\varepsilon\epsilon_0$ and $\psi\varepsilon^2 \leq \psi \varepsilon\varepsilon_0$ we get
    \begin{subequations}
        \begin{align}
            \label{proof:theorem_1:inequality_x^2}
            \E\left[x_{k+1}^2\right] &\geq  A_{ms}^{(2,*)}\E\left[\zeta_k\right] + \psi\varepsilon^2 \geq  A_{ms}^{(2,*)}\E\left[\zeta_k\right] + \psi\varepsilon\epsilon_0\notag \\ &=: A_{ms}^{(2,*)}\E\left[\zeta_k\right] + \varepsilon Q_1^{(2,*)}\E\left[\zeta_k\right] + \varepsilon b_1^{(2)} \\
            \label{proof:theorem_1:inequality_x^2.2}
            \E\left[x_{k+1}^2\right] &\leq  A_{ms}^{(2,*)}\E\left[\zeta_k\right] + \psi\varepsilon^2 + \frac{1}{2}\psi\varepsilon + 2\psi\varepsilon\E\left[y_k^2\right] \notag \\
            &\leq  A_{ms}^{(2,*)}\E\left[\zeta_k\right] + \psi\varepsilon\varepsilon_0 + \frac{1}{2}\psi\varepsilon + 2\psi\varepsilon\E\left[y_k^2\right] \notag \\
            &=: A_{ms}^{(2,*)}\E\left[\zeta_k\right] + \varepsilon Q_2^{(2,*)}\E\left[\zeta_k\right] + \varepsilon b_2^{(2)},
        \end{align}
    \end{subequations}
    with 
    \begin{subequations}
        \label{proof:main_theorem:implicit_definitions}
        \begin{align}
            Q_1^{(2,*)} &:= \begin{bmatrix}     
                                0 & 0 & 0 & 0 & 0 & 0
                            \end{bmatrix}, \\
            Q_2^{(2,*)} &:= \begin{bmatrix}     
                                0 & 0 & 0 & 2\psi & 0 & 0
                            \end{bmatrix}, \\
            b_1^{(2)} &:= \psi\varepsilon\epsilon_0, \\
            b_2^{(2)} &:= \psi\left(\frac{1}{2} + \varepsilon_0\right).
        \end{align}
    \end{subequations}
    
    We continue by considering the terms $T_1(y,\varepsilon)=\frac{\varepsilon + 2|y|}{(|y| + \varepsilon)^2}$ and~${T_2(y,\varepsilon):= \frac{\varepsilon(\varepsilon + 2|y|)^2}{(|y| + \varepsilon)^2}}$ as defined in \Cref{lemma:second_moment_terms}. We highlight that both $T_1(y,\varepsilon)$ and $T_2(y,\varepsilon)$ are  non-negative and as $|y| + \varepsilon > 0$ for all $ y  \in \R$, it is easy to prove they are upper bounded. Thus, there exist some positive constants $${M > 0}, F>0: {0 \leq T_1(y,\varepsilon) \leq M}, {0 \leq T_2(y,\varepsilon) \leq F}$$ for all $ y \in \R$ and $\varepsilon \in (0, \varepsilon_0]$.
    For bounding the term $R_y(\varepsilon)$ given in \eqref{lemma:second_moment_terms:Ry} we first bound the following expectations 
    \begin{subequations}
    \label{proof:theorem:ineq_1}
        \begin{alignat}{2}
            \label{proof:theorem:ineq_1_1}
            -M\E\left[x_{k-1}^2\right] &\leq -\E\left[ T_1(y_{k-1,\varepsilon})x_{k-1}^2\right] &&\leq 0, \\
            - M\E\left[x_{k}^2\right] &\leq -\E\left[ T_1(y_{k-1},\varepsilon)x_{k}^2\right] &&\leq 0, \\
            0 &\leq \E\left[T_1(y_{k-1},\varepsilon)y_{k-1}^2\right] &&\leq  M\E\left[y_{k-1}^2\right], \\
            \label{proof:theorem:ineq_1_4}
            0 & \leq \E\left[ T_2(y_{k-1},\varepsilon)\right] &&\leq  F.
        \end{alignat}
    \end{subequations}
    We now use the aforementioned bounds, as well as \eqref{proof:theorem:ineq_2}, to obtain a form suitable for the use of \Cref{lemma:matrix_upper_lower_bounds} for the dynamics \eqref{lemma:second_moment_terms:2} in \Cref{lemma:second_moment_terms} and the ones in \Cref{lemma:second_moment_aux_terms}.
    Define for ease of notation ${\Bar{C}:= \rho^2\frac{\rho^2\chi + 2\gamma^2}{4}}$ and observe that $${\E\left[y_{k+1}^2\right] = A_{ms}^{(4,*)}\E\left[\zeta_k\right] + R_y(\varepsilon)},$$ as given in \eqref{lemma:second_moment_terms:2}. Then, by using \eqref{proof:theorem:ineq_2} and \eqref{proof:theorem:ineq_1_1} - \eqref{proof:theorem:ineq_1_4}, as well as ${\frac{\mu^2\gamma^2\psi}{4}\varepsilon^2 \geq \frac{\mu^2\gamma^2\psi}{4}\varepsilon\epsilon_0}$ and ${\frac{\mu^2\gamma^2\psi}{4}\varepsilon^2 \leq \frac{\mu^2\gamma^2\psi}{4}\varepsilon\varepsilon_0}$, we have
    \begin{subequations}
    \begin{align}
        \label{proof:theorem:pre_y^2.1}
        R_y(\varepsilon) &\geq \mu^2\varepsilon\Big(\frac{\gamma^2\psi}{4}\varepsilon - \frac{\rho^2\chi M}{2}\left(\E\left[x_{k-1}^2\right] + \E\left[x_{k}^2\right]\right)\Big) \notag \\
        &\geq \frac{\mu^2\gamma^2\psi}{4}\varepsilon\epsilon_0 -\mu^2\varepsilon \frac{\rho^2\chi M}{2}\left(\E\left[x_{k-1}^2\right] + \E\left[x_{k}^2\right]\right) \notag \\
        &=:  \varepsilon Q_1^{(4,*)}\E\left[\zeta_k\right] + \varepsilon b_1^{(4)}  \\
        \label{proof:theorem:pre_y^2.2}
        R_y(\varepsilon) &\leq \mu^2\varepsilon\Bigg(\frac{\gamma^2\psi}{4}\left(\varepsilon + \frac{1}{2} + 2\E\left[y_{k-1}^2\right]\right) \notag + \Bar{C}M\E\left[y_{k-1}^2\right] \\ &+ \frac{\rho^2\gamma^2}{2}F\Bigg) \notag \\ 
        &\leq \mu^2\varepsilon\Bigg(\frac{\gamma^2\psi}{4}\left(\varepsilon_0 + \frac{1}{2}\right) + \left(\frac{\gamma^2\psi}{2} + \Bar{C}M\right)\E\left[y_{k-1}^2\right] \\ &+ \frac{\rho^2\gamma^2}{2}F\Bigg)
        =: \varepsilon Q_2^{(4,*)}\E\left[\zeta_k\right] + \varepsilon b_2^{(4)},
    \end{align}
    \end{subequations}
     with $Q_1^{(4,*)}, b_1^{(4)}$ and $Q_2^{(4,*)}, b_2^{(4)}$ implicitly defined in \eqref{proof:theorem:pre_y^2.1} and \eqref{proof:theorem:pre_y^2.2} respectively, analogous to \eqref{proof:main_theorem:implicit_definitions}. Thereby, we get 
    \begin{subequations}
    \label{proof:theorem_1:inequality_y^2}
        \begin{align}
            \E[y_{k+1}^2] &\geq A_{ms}^{(4,*)}\E\left[\zeta_k\right] + \varepsilon Q_1^{(4,*)}\E\left[\zeta_k\right] + \varepsilon b_1^{(4)}, \\
            \E[y_{k+1}^2] &\leq  A_{ms}^{(4,*)}\E\left[\zeta_k\right] + \varepsilon Q_2^{(4,*)}\E\left[\zeta_k\right] + \varepsilon b_2^{(4)}.
        \end{align}
    \end{subequations}
     Finally, from \Cref{lemma:second_moment_aux_terms} we have $$\E[x_{k+1}y_{k+1}] = A_{ms}^{(6,*)}\E\left[\zeta_k\right] + \frac{\mu\gamma\psi\varepsilon}{2}(\varepsilon + 2\E\left[|y_{k-1}|\right]),$$ and by using \eqref{proof:theorem:ineq_2}, $\E\left[|y_{k-1}|\right] \geq 0$, ${\frac{\mu\gamma\psi}{2}\varepsilon^2 \geq \frac{\mu\gamma\psi}{2}\varepsilon\epsilon_0}$ and ${\frac{\mu\gamma\psi}{2} \varepsilon^2\leq \frac{\mu\gamma\psi}{2}\varepsilon\varepsilon_0}$ we get
    \begin{subequations}
        \begin{align}
            \label{proof:theorem:xy.1}
            \E[x_{k+1}y_{k+1}] &\geq A_{ms}^{(6,*)}\E\left[\zeta_k\right] + \frac{\mu\gamma\psi}{2}\varepsilon^2 \notag \\
            &\geq A_{ms}^{(6,*)}\E\left[\zeta_k\right] + \frac{\mu\gamma\psi}{2}\varepsilon\epsilon_0 \notag \\
            &=: A_{ms}^{(6,*)}\E\left[\zeta_k\right] + \varepsilon Q_1^{(6,*)}\E\left[\zeta_k\right] + \varepsilon b_1^{(6)} \\
            \label{proof:theorem_1:inequality_xy}
            \E[x_{k+1}y_{k+1}] &\leq  A_{ms}^{(6,*)}\E\left[\zeta_k\right] + \frac{\mu\gamma\psi}{4}\varepsilon\big(2\varepsilon +  1 \notag \\& + 4\E\left[y_{k-1}^2\right]\big) \notag \\
            &\leq  A_{ms}^{(6,*)}\E\left[\zeta_k\right] + \frac{\mu\gamma\psi}{4}\varepsilon\big(2\varepsilon_0 +  1 \notag \\ &+ 4\E\left[y_{k-1}^2\right]\big) \notag \\
            &=: A_{ms}^{(6,*)}\E\left[\zeta_k\right] + \varepsilon Q_2^{(6,*)}\E\left[\zeta_k\right] + \varepsilon b_2^{(6)},
        \end{align}
    \end{subequations}
    with $Q_1^{(6,*)}, b_1^{(6)}$ and $Q_2^{(6,*)}, b_2^{(6)}$ implicitly defined in \eqref{proof:theorem:xy.1} and \eqref{proof:theorem_1:inequality_xy} respectively, analogous to \eqref{proof:main_theorem:implicit_definitions}.
    
    \noindent Define $b_1^{(1)} = b_1^{(3)} = b_1^{(5)} = b_2^{(1)} = b_2^{(3)} = b_2^{(5)} := 0$, 
    \begin{align*}
        Q_1^{(1,*)} &:= \begin{bmatrix}
            0 & 0 & 0 & 0 & 0 & 0
        \end{bmatrix} 
    \end{align*}
    with $Q_1^{(1,*)} = Q_1^{(3,*)} = Q_1^{(5,*)} = Q_2^{(1,*)} = Q_2^{(3,*)} =Q_2^{(5,*)}$.
    Trivially, by the Squeeze Theorem for $i \in \{1,3,5\}$ we reformulate $\E\left[\zeta_{k+1}^{(i)}\right] = A_{ms}^{(i,*)}\E\left[\zeta_k\right]$ as $$\E\left[\zeta_{k+1}^{(i)}\right] \leq  \left(A_{ms}^{(i,*)}+ \varepsilon Q_2^{(i,*)}\right)\E\left[\zeta_k\right] + \varepsilon b_2^{(i)} = A_{ms}^{(i,*)}\E\left[\zeta_k\right],$$  $$\E\left[\zeta_{k+1}^{(i)}\right] \geq  \left(A_{ms}^{(i,*)} + \varepsilon Q_1^{(i,*)}\right)\E\left[\zeta_k\right] + \varepsilon b_1^{(i)} = A_{ms}^{(i,*)}\E\left[\zeta_k\right],$$ as $Q_1^{(i,*)}, Q_2^{(i,*)}$ and $b_1^{(i)}, b_2^{(i)}$ contain only zero elements. Finally, we get the element-wise inequalities
    \begin{align*}
       (A_{ms} + \varepsilon Q_1)\E[\zeta_k] + \varepsilon b_1 &\leq \E[\zeta_{k+1}] \\ &\leq (A_{ms} + \varepsilon Q_2)\E[\zeta_k] + \varepsilon b_2.
    \end{align*}
    We first have by assumption $\beta > \mu \rho \sqrt{\chi\psi} \overset{\ref{proposition:general_properties:3}}{=}\mu\gamma\rho$, therefore from \Cref{lemma:expectation_convergence} we know that the system trajectory $(x_k,y_k)$ converges to $(0,0)$ in expectation for any initial condition. Next, from \Cref{lemma:stable_matrices} we know that for any positive real $\chi, \psi$, and $\mu$ there exists some $\rho_0 \in \R^{+}$ and $\beta_0 \in (0,1)$, such that for any $\rho \in (0, \rho_0]$ and $\beta \in [1-\beta_0, 1+\beta_0]$ the eigenvalues of $A_{ms}$ are inside the unit circle. As ${\varepsilon > 0}$ may be chosen arbitrarily small, i.e. from an arbitrarily small interval of positive values,  and by setting~${\theta_k := \E\left[\zeta_k\right]}$ we observe the conditions of \Cref{lemma:matrix_upper_lower_bounds} to be satisfied. Therefore, we have for any initial~$\E\left[\zeta_0\right]$ exponential convergence of the $\E\left[\zeta_k\right]$ to an~$\varepsilon$-dependent set around $0$, which in addition implies that~${\E\left[\tilde{x}_k^2\right]}$ and $\E\left[y_k^2\right]$ converge to an~$\varepsilon$-neighbourhood of~$0$, thus proving the theorem claim for global exponential practical convergence of the variance of the system iterates.
\end{proof}

\end{document}